\newcommand{\barq}{\bar{q}}
\newcommand{\conof}[1]{\operatorname{MG}\left(#1\right)}
\newcommand{\half}{\small{\frac{1}{2}}}
\newcommand{\Px}{\mathbb{P}}
\newcommand{\Rx}{\mathbb{R}}
\newcommand{\Bspaceat}[1]{B_{#1}}
\newcommand{\Cal}[1]{\mathcal #1}
\newcommand{\Differential}[2]{\Deriv#1\ #2}
\newcommand{\Lexp}[1]{L^{\Phi}\left(#1\right)}
\newcommand{\LlogL}[1]{L^{\Psi}\left(#1\right)}
\newcommand{\dBspaceat}[1]{B_{#1}^1}
\newcommand{\dexpbundleat}[1]{S\dmaxexpat{#1}}
\newcommand{\dmaxexpat}[1]{\mathcal E_1 \left(#1\right)}
\newcommand{\dmixbundleat}[1]{\prescript{*}{}S\dmaxexpat{#1}}
\newcommand{\dsdomainat}[1]{\sdomain_{#1}^1}
\newcommand{\expbundleat}[1]{S\maxexpat{#1}}
\newcommand{\expbundle}{S\expmod}
\newcommand{\maxexpat}[1]{\mathcal E \left(#1\right)}
\newcommand{\maxexp}{\mathcal E}
\newcommand{\mixbundleat}[1]{\prescript{*}{}S\maxexpat{#1}}
\newcommand{\mixbundle}{\prescript{*}{}S\expmod}
\newcommand{\nDifferential}[3]{\Deriv^{#1}#2\ #3}
\newcommand{\preBspaceat}[1]{\prescript{*}{}B_{#1}}
\newcommand{\predBspaceat}[1]{\prescript{*}{}B_{#1}^1}
\newcommand{\sdomainat}[1]{\sdomain_{#1}}
\newcommand{\sdomain}{\mathcal S}
\newcommand{\EF}[1]{\operatorname{EF}\left(#1\right)}
\newcommand{\MF}[1]{\operatorname{MF}\left(#1\right)}
\newcommand{\MG}[1]{\operatorname{MG}\left(#1\right)}
\newtheorem{theorem}{Theorem}[section]
\newtheorem{definition}[theorem]{Definition}
\newtheorem{proposition}[theorem]{Proposition}
\newtheorem{example}[theorem]{Example}
\newtheorem{remark}[theorem]{Remark}
\date{\hspace{1cm} \\ First version: 16 January 2016. This version: 26 Oct 2016.\\ 
{\normalsize This paper has been inspired by our 1996 preprint {\tt{arXiv:0901.1308[math.PR] }}}}
\begin{document}

\title{{\normalsize Updated version to appear in: Nielsen, F., Critchley, F., \& Dodson, K.  (Eds), Computational Information Geometry for Image and Signal Processing, Springer, 2016} \\ \hspace{3cm} \\ 
{\bf \large Projection based dimensionality reduction for measure valued evolution equations in statistical manifolds}}
\author{Damiano Brigo \\ Dept. of Mathematics \\ Imperial College London \\ 180 Queen's Gate \\  {\small \tt{damiano.brigo@imperial.ac.uk}}\and   Giovanni Pistone \\de Castro Statistics \\ Collegio Carlo Alberto \\ Via Real Collegio 30 \\ 10024 Moncalieri, IT }

\maketitle

\thispagestyle{empty}

\begin{abstract}

We propose a dimensionality reduction method for infinite--dimensional measure--valued evolution equations such as the Fokker-Planck partial differential equation or the Kushner-Stratonovich resp. Duncan-Mortensen-Zakai stochastic partial differential equations of nonlinear filtering, with potential applications to signal processing, quantitative finance, heat flows and quantum theory  among many other areas. Our method is based on the projection coming from a duality argument built in the exponential statistical manifold structure developed by G. Pistone and co-authors. The choice of the finite dimensional manifold on which one should project the infinite dimensional equation is crucial, and we propose finite dimensional exponential and mixture families. This same problem had been studied, especially in the context of nonlinear filtering, by D. Brigo and co-authors but the $L^2$ structure on the space of square roots of densities or of densities themselves was used, without taking an infinite dimensional manifold environment space for the equation to be projected. Here we re-examine such works from the exponential statistical manifold point of view, which allows for a deeper geometric understanding of the manifold structures at play. We also show that the projection in the exponential manifold structure is consistent with the Fisher Rao metric and, in case of finite dimensional exponential families, with the assumed density approximation. Further, we show that if the sufficient statistics of the finite dimensional exponential family are chosen among the eigenfunctions of the backward diffusion operator then the statistical-manifold or Fisher-Rao projection provides the maximum likelihood estimator for the Fokker Planck equation solution. We finally try to clarify how the finite dimensional and infinite dimensional terminology for exponential and mixture spaces are related.

%
%

\bigskip

{\bf{ Keywords:}} Statistical manifold, statistical bundle, infinite dimensional statistical manifold, Orlicz spaces, exponential manifold, mixture manifold, exponential family, mixture family, finite-dimensional projection, Fokker Planck equation, heat equation, filtering theory, statistical manifold projection, dimensionality reduction, partial differential equation projection, finite dimensional approximation, assumed density approximation, eigenfunctions as sufficient statistics, maximum likelihood estimation of the Fokker Planck equation.  

\bigskip

\end{abstract}

\tableofcontents


\section{Introduction}
In this paper we propose a dimensionality reduction method for infinite dimensional measure valued evolution equations such as the Fokker-Planck (or forward Kolmogorov) partial differential equation or the Kushner-Stratonovich resp. Duncan-Mortensen-Zakai stochastic partial differential equations of nonlinear filtering, with potential applications to signal processing, quantitative finance, physics and quantum theory evolution equations, among many other areas. 

This problem naturally shows up when one has to compute the probability distribution of the solution of a stochastic differential equation, or the conditional probability distribution of the solutions of a stochastic differential equation given a related observation process (filtering). Areas where such problems originate naturally are given in signal processing and stochastic filtering in particular, in quantitative finance, in heat flows, in quantum theory and potentially many others, as we discuss in Section \ref{sec:ideq} below. 

Our method is based on the projection coming from a duality argument built in the non-parametric infinite-dimensional exponential statistical manifold structure developed by G. Pistone and co-authors, whose rich history is summarized in Section \ref{sec:infogeom}. 

Dimensionality reduction and finite dimensional approximations will be based on projection on subspaces, so that the study of subspaces is fundamental. We first consider general subspaces in Section \ref{sec:submodels}, trying also to clarify non-parametric exponential and mixture subspaces, and then move to finite dimensional subspaces in Section \ref{sec:fdsub}. 

Clearly the choice of the finite dimensional manifold on which one should project the infinite dimensional equation is crucial, and we propose finite dimensional exponential and mixture families. This same problem had been studied, especially in the context of nonlinear filtering, by D. Brigo and co-authors. In those works the $L^2$ structure on the space of square roots of densities (based on the map $p \mapsto \sqrt{p}$, leading to the Hellinger distance) or of densities themselves (based on the map $p \mapsto p$, leading to the $L^2$ direct metric) was used, and no infinite dimensional manifold environment space for the equation to be projected was introduced. In fact, the main difficulty here is the fact the cone $L^2_+$ has empty relative interior unless the sample space is finite. Here we re-examine such works when adopting the  exponential statistical manifold as an infinite dimensional environment, which allows for a deeper  understanding of the geometric structures at play. We will see earlier in Section \ref{sec:infogeom} that the statistical manifold approach and the Hellinger approach lead to the same metric in the finite dimensional manifold, whereas the $L^2$ direct approach leads to a different metric. This different ``direct metric" works well with a specific type of finite dimensional mixture families, but since the direct metric structure is not compatible with the finite dimensional metric induced by the statistical manifold we will not pursue it further here but leave it for further work. 

Going back to Section \ref{sec:fdsub}, in that section we further clarify how the finite dimensional and infinite dimensional terminology for exponential and especially mixture spaces are related. In the case of mixtures, one has to be careful in distinguishing mixtures generated by convex combinations of given distributions and sets of distributions that are closed under convex mixing. 

Section \ref{sec:proj} considers the finite dimensional projected differential equation for the approximated evolution in a number of cases, in particular the heat equation and the Fokker-Planck equation, and shows how this is derived in detail under the statistical manifold structure introduced earlier. For the particular case of the Fokker-Planck equation we discuss the interpretation of the projected, finite dimensional law as law of a different process, thus providing a tool for designing stochastic differential equations whose solutions densities evolve in a given finite dimensional family. 
We also discuss how one can measure the goodness of the approximation, show that projection in the statistical manifold structure is equivalent with the assumed density approximation for exponential families, and finally prove that if the sufficient statistics of the exponential family are chosen among the backward diffusion operator eigenfunctions then the projected equation provides the maximum likelihood estimator of the Fokker Planck equation solution.  

Section \ref{sec:conc} concludes the paper, hinting at further research problems. 

This paper is a substantial update of our 1996 preprint \cite{2009arXiv0901.1308B}.

\section{Infinite dimensional measure valued evolution equations}\label{sec:ideq}

Stochastic Differential Equations (SDEs) are used in many areas of mathematics, physics,  engineering and social sciences. SDEs represent extensions of ordinary differential equations to systems that are perturbed by random noise. In many problems, and we will see two important examples below, it is important to characterize the evolution in time of the probability law of the solution $X_t$ of the SDE. This probability law, whose density is denoted usually by $p_t$, satisfies typically a partial differential equation (PDE) called Fokker-Planck (or forward Kolmogorov) equation or a stochastic partial differential equation (SPDE) called Kushner-Stratonovich (or Duncan-Mortensen-Zakai in an unnormalized version) equation, depending on the problem. Such measure-valued evolution equations are typically infinite dimensional, in that their solution curves in time $t \mapsto p_t$ do not stay in an a-priori given finite-dimensional parametric family, or in a finite dimensional manifold, unless very special conditions are satisfied. This implies that PDEs and SPDEs cannot be reduced exactly to ODEs or SDEs respectively, but that finite dimensional approximations of these equations need to be considered. One way to obtain finite dimensional approximations is choosing a finite dimensional subspace of the space where the equations for $p_t$ are written, and project the original PDE or SPDE for $p_t$ onto the subspace, using suitable geometric structures, thus obtaining a finite dimensional approximation that is driven by the best local approximation of the relevant vector fields. In this paper our aim is to clarify what kind of geometric structures can make the above approach fully rigorous. Most past works on dimensionality reduction of measure valued equations, see for example \cite{hanzon87,brigoieee,brigobernoulli,armstrongbrigomcss} to name a few, use the $L^2$ space as a framework to implement the above projection. Here we will use the statistical manifold developed by G. Pistone and co-authors instead.

\subsection{The Fokker-Planck or forward Kolmogorov Equation}

Let us start our formal analysis by introducing the complete probability space $(\Omega,{\cal F},\Px)$, with a filtration $\{{\cal F}_t, \ t\ge 0\}$, on which we 
consider a stochastic process $\{X_t, t \ge 0\}$ of diffusion type, solution of a SDE  in $\Rx^N$.
Let the SDE describing $X$ be
of the following form
\begin{eqnarray}\label{eq:dXSDE}
   dX_t =  f_t(X_t) dt + \sigma_t(X_t) d W_t,
\end{eqnarray}
where $\{W_t, t\ge 0\}$ is an $M$-dimensional standard Brownian motion independent
of the initial condition $X_0$, and the drift $f_t$ and diffusion coefficient $\sigma_t$ are respectively an $N$-dimensional vector function and an $N\times M$ matrix function. We define $a(x) := \sigma_t(x) \sigma_t(x)'$ the $N \times N$ diffusion matrix, where the prime symbol denotes transposition. 
In the following to contain notation we will often neglect the time argument in $f_t$ and $a_t$.
The equation above is an It{\^o} stochastic differential
equation.
The following set of assumptions will be in force throughout
the paper.

\begin{itemize}
\item[(A)] Initial condition:~ We assume that the initial
state $X_0$ is independent of the process $W$ and has a density $p_0$
w.r.t.\ the Lebesgue measure on $\Rx^n$,
with finite moments of any order, and 
with $p_0$ almost surely positive.
%
   \item[(B)] Local strong existence:~$f \in C^{1,0}$, $a \in  C^{2,0}$,
   which means that $f$ is once continuously differentiable
   wrt $x$ and continuous wrt  $t$
   and $a$ is twice continuously differentiable
   wrt $x$ and continuous wrt $t$.
   These assumptions imply in
   particular local Lipschitz continuity.

   \item[(C)] Growth / Non--explosion~:
there exists $K > 0$ such that
\begin{eqnarray*}
  2 x' f_t(x) + \| a_t(x) \| \leq K\, (1+\vert x \vert^2),
\end{eqnarray*}
for all $t\geq 0$, and for all $x\in \Rx^N$.
\end{itemize}

Under assumptions~(A),~(B) and~(C) $\exists !$ solution $\{X_t\,,\,t\geq 0\}$ to the state equation,
see \cite{StroVar}, Theorem 10.2.1.

\begin{itemize}
\item[(D)]
We assume that the law of $X_t$ is absolutely continuous and
its density
$p_t(x)$ at $x$ has regularity $C^{2,1}$ in $(x,t)$ and satisfies
the Fokker-Planck equation (FPE):
\begin{eqnarray} \label{eq:FP}
\frac{\partial p_t}{\partial t} = {\cal L}_t^\ast p_t,
\end{eqnarray}
where the backward diffusion operator ${\cal L}_t$
is defined by
\begin{displaymath}
   {\cal L}_t = \sum_{i=1}^N f_i \,
   \frac{\partial}{\partial x_i} + \frac{1}{2}
  \sum_{i,j=1}^N a_{i,j} \frac{\partial^2}{\partial x_i \partial x_j},
\end{displaymath}
and its dual (forward) operator is given by
\begin{displaymath}
   {\cal L}^\ast_t p = -
   \sum_{i=1}^N \frac{\partial}{\partial x_i} (f_i p) + \frac{1}{2} 
 \sum_{i,j=1}^N  \frac{\partial^2}{\partial x_i \partial x_j} (a_{i,j} p).
\end{displaymath}
We assume also $p_t(x)$ to be positive for all $t \ge 0$ and
almost all $x \in \Rx^N$.
\end{itemize}
Assumption (D) holds for example under conditions given by
boundedness of the coefficients $f$ and $a$ plus
uniform ellipticity of $a_t$, see \cite{StroVar} Theorem 9.1.9.
Different conditions are also given in \cite{FrieA},
Theorem 6.4.7.
 
Situations where knowledge of the Fokker-Planck solution is important occur for example in signal processing and quantitative finance, among many other fields. Consider the following two examples.

\subsection{Stochastic Filtering with discrete time observations}
In a filtering problem with discrete time observations, the SDE above \eqref{eq:dXSDE} for $X$ is an unobserved signal, of which we observe in discrete time a function $h$ perturbed by noise, namely a process
\[ Y_{t_k} = h(X_{t_k}) + V_{t_k} \]
where $t_0=0, t_1, \ldots, t_k, \ldots$ are discrete times at which observations $Y$ arrive. The process $V$ is a second Brownian motion, independent of the process $W$ driving the signal $X$, and models the noise that perturbs our observation $h$. 
The filtering problem consists of estimating $X_{t_k}$ given observations $Y_{t_0},Y_{t_1},\ldots,Y_{t_k}$ for all $k =1,2,\ldots$. 
It was shown in \cite{brigobernoulli}, Section 6.2, that one can find a suitable finite dimensional exponential family (including the observation function $h$ among the exponent functions) such that the correction step (Bayes formula) at each arrival of new information is exact.  What really brings about the infinite dimensional nature of the problem is the prediction step: between observation, the density of the signal evolves according to the FPE for $X$, and it is this FPE, and the operator ${\cal L}^\ast$ in particular, that leads to infinite dimensionality. Therefore, to study infinite dimensionality in filtering problems with discrete time observations, it suffices to study the Fokker-Planck equation, see again \cite{brigobernoulli} Section 6.2 for the details. 

\subsection{Filtering with continuous time observations and quantum physics}
Consider again the filtering problem, but assume now that observations arrive in continuous time and are given by a stochastic process
\[ d Y_t=h(X_t) dt + dV_t .\]
In this case the solution of the filtering problem is no longer a PDE but a SPDE driven by the observation process $dY$. The SPDE features the same operator ${\cal L}^\ast$ as the FPE and is infinite dimensional. The SPDE exists in a normalized or unnormalized form, and has been studied extensively. It has been shown that even for toy systems like the cubic sensor ($N=1, M=1, f_t = 0, \sigma_t = 1, h(x) = x^3$) the SPDE solution is infinite dimensional \cite{hazewinkel}.  
Finite dimensional approximations based on finite dimensional exponential and mixture families, building on the $L^2$ structure on the space of densities or their square roots to build a projection, have been considered in \cite{hanzon87,brigobernoulli,brigoieee,armstrongbrigomcss}. Nonlinear filtering equations are not of interest merely in signal processing. Several authors have noticed analogies between the filtering SPDEs hinted at above and the evolution equations in quantum physics, see for example \cite{mitter}. Moreover, the related projection filter developed by D. Brigo and co-authors has been applied to quantum electrodynamics, see for example \cite{handel}.

The SPDE case driven by rough paths such as $dY$ is of particular interest because it combines the geometry in the state space for $X$ and $Y$ and the geometry in the space of probability measures associated with $X$ conditional on $Y$'s history. In this paper we are focusing on the latter but in presence of SPDEs one may have to work with the former as well.  One of the problems in this case is choosing the right type of projection also from the state space geometry point of view and see how the optimality of the SPDE projected solution compares with the local  optimality in the projection of the separate drift and diffusion coefficient vector fields of the SPDE. This is related to the different projections suggested in J. Armstrong and D. Brigo    \cite{armstrongbrigoicms} for evolution equations driven by rough paths. For such equations there is more than one possible projection, depending on the notion of optimality one chooses, which is related to the rough paths properties.

\subsection{Valuation of securities with volatility smile in Mathematical Finance}
In Mathematical Finance, often one models stochastic local volatility for a given asset price $S$ via a two-dimensional SDE under the pricing measure
\begin{eqnarray}\label{eq:stochvol}
dS_t &=& r S_t dt + \sqrt{\xi}_t v(S_t) d W_t\\ \nonumber
d \xi_t &=& k(\theta - \xi_t) dt + \eta \sqrt{\xi_t} d V_t\\ \nonumber   
&&d\langle W, V \rangle_t = \rho\ dt 
\end{eqnarray}
where $r, k, \theta, \eta$ are positive constants, $\rho \in [-1,1]$, and $v$ is a regular function. In case $v(S) =S$ one has the Heston model, whereas for more general $v$'s one has a stochastic-local volatility model. One may also extend the model with a third stochastic process for the short rate $r$, introducing a stochastic process $r_t$ of diffusion type replacing the constant risk free rate $r$, obtaining a three dimensional diffusion. We assume below $r$ is constant.   

To calibrate the model one has to fit a number of vanilla options. To do this, it is important to know the distribution of $S_T$ at different maturities $T>0$. In general, this can be deduced by the solution $p_t$ of the FPE for the two-dimensional diffusion process $X_t = [S_t,\xi_t]'$ by integrating with respect to the second component. However, the solution of the FPE for this $X$ is not know in general and is infinite dimensional. It may therefore be important to be able to find a good finite dimensional approximation for this density in order to value vanilla options in a way that leads to an easier calibration process. 

\bigskip

\subsection{The anisotropic heat equation in physics}
\label{sec:running}
We have mentioned earlier that the $L^2$ structure has been used in the past to project infinite-dimensional measure valued evolution equations for densities $t \mapsto q_t$. This structure has been invoked with the maps $q \mapsto \sqrt{q}$ \cite{brigobernoulli,brigoieee} or even $q \mapsto q$ \cite{armstrongbrigomcss,brigol2}, as we will explain more in detail below. It should be noted that the approach $q \mapsto q$ corresponds  to the classical variational approach to parabolic equations, see e.g. the textbook by H. Brezis \cite[Ch. 8--10]{MR2759829}. A typical example of such approach is the equation whose weak form is
\begin{equation}\label{eq:heath}
 \derivby t \int p_t(x) f(x) \ dx + \int \sum_{ij} a_{ij}(x) \left(\partiald {x_i} p_t(x) \right)\left(\partiald {x_j} f(x)\right) \ dx = 0,
\end{equation}
 where both the density $p_t$ and the test function $f$ belong to a Sobolev's space. This corresponds to the operator's form $\partiald t p_t = {\cal L}^\ast p_t$, with
 \begin{equation*}
   {\cal L}^\ast p(x) = \sum_{ij} \partiald {x_j}  \left(a_{ij}(x) \partiald {x_i} p(x)\right) .
 \end{equation*}

This special case is the heat equation in the anisotropic case when the specific heat is constant, and is an important example of infinite dimensional evolution equation we aim at approximating with a finite dimensional evolution. We will keep this equation as an ongoing working example, and we will refer to it as our {\emph{running example}} throughout the paper.
 
Going back to \eqref{eq:heath}, in the following we will discuss an extension of the exponential statistical bundle to the case where the densities are (weakly) differentiable and belong to a weighted Sobolev's space, see \cite[Sec. 6]{lods|pistone:2015}.    

\bigskip

All the above examples from signal processing in engineering, from social sciences, from physics and quantum physics should be enough to motivate the study of finite dimensional approximations of the FPE or of the filtering SPDE. We will tackle the FPE in the following sections, but many other applications are possible. 

We now move to introduce the environment space where the above equations will be examined, the nonparametric infinite dimensional exponential statistical manifold of Giovanni Pistone and co-authors.

\section{Information geometric background}\label{sec:infogeom}
In this section we review the construction of Information Geometry (IG) via the exponential statistical manifold, as originally developed by G. Pistone and C. Sempi \cite{pistone|sempi:95}. More precisely, we will refer to an updated version of the theory we call (exponential) statistical bundle. Among other applications, we will include a qualification intended to deal with the special case of differentiable densities on a real space where we take a Gaussian probability density as background measure $\mu$. This is referred to shortly as Gaussian space. 

\subsection{The exponential statistical manifold and the $L^2$ approach}\label{sec:expstatL2}

We start with an introduction and we shall move to formal definitions below in Sec. \ref{sec:modelspaces}. This approach to IG considers the space of all positive densities of a measured sample space $(X, \Cal X, \mu)$ which are (in an information-theoretic sense) near a given positive density $p$. The idea is representing each element $q$ of this space with the chart 
\begin{equation}
\label{eq:charts}
s_p \colon q \mapsto \log \frac q p - \expectat p {\log \frac q p} = \log \frac q p + \KL p q.
\end{equation}

We define Banach spaces denoted $B_p$ and domains $\maxexp$ and $\sdomainat p$, such that the mappings $s_p \colon \maxexp \to \sdomainat p \subset \Bspaceat q$, $p \in \maxexp$, defined in  Equation \eqref{eq:charts}, form the \emph{affine} atlas of a manifold modeled on the Banach spaces $\Bspaceat p$, $p \in \maxexp$. An atlas is affine if all change-of-chart transformation are affine functions. The Banach space $\Bspaceat p$, the domain $\maxexp$, and the domain $\sdomainat p$ are formally defined below in Section \ref{sec:modelspaces}. We shall show a crucial property of the model Banach spaces $\Bspaceat p$, $p \in \maxexp$, namely they are all isomorphic to each other.

Each $B_p$ is a vector space of $p$-centered random variables, so that the patches are easily shown to be of an exponential form, precisely each $s_p^{-1} = \euler_p \colon \sdomainat p \to \maxexp$ is given by 
\begin{equation*}
  \euler_p(u) = \expof{u - K_p(u)} \cdot p, \quad u \in \sdomainat p  \subset B_p,
\end{equation*}
where $K_q(u) =  \log \expectat q {\euler^u}$ will be defined more precisely later on in Definition \ref{def:KpGp}.

The affine manifold so constructed is not a Riemannian manifold as the Banach spaces $\Bspaceat p$ are not Hilbert spaces. Instead, the theory specifies a second set of Banach spaces $\preBspaceat p$, $p \in \maxexp$, in natural duality with the $\Bspaceat p$'s, and a second affine atlas of the form
\begin{equation}
  \label{eq:m-chart}
 \eta_p(q) = \frac qp -1 \in \preBspaceat p, \quad q \in \maxexp, 
\end{equation}
discussed by A. Cena and G. Pistone \cite{MR2396032}.

 The result is a non parametric version of S.-i. Amari's IG, see \cite{amari:87dual,amari|nagaoka:2000}. Natural vector bundles based on this (dually) affine Banach manifold can be defined together with the proper parallel transports, leading to a first and second order calculus based on connections derived from such transports. We do not develop this aspect here, see the overview by G. Pistone in \cite{pistone:2013Entropy}.

In the application we consider below, the base space is the Lebesgue space on $\reals^d$ and the reference measure is given by the standard Gaussian density. Recent results allow to qualify the theory by considering densities which are differentiable in the sense of distribution and belong to a particular Sobolev space. This is interesting here because it gives the base to discuss partial differential equations in the variational form, see a few results in B. Lods and G. Pistone \cite{lods|pistone:2015}. 

Many expressions of the density other than Equation \eqref{eq:charts} have place in the literature, for example the use of a deformed logarithm, see e.g. \cite{naudts:2011GTh}. The most classical is the $L^2$-embeddings based on the map $q \mapsto \sqrt q \in L^2(\mu)$ that was used by D. Brigo, B. Hanzon, and F. LeGland in \cite{brigobernoulli,brigoieee} in discussing the approximation of nonlinear filters. This mapping is actually a mapping from the set of densities to the Hilbert manifold of the unit sphere, so that a natural set of charts is given by the charts of the manifold of the unit sphere of $L^2(\mu)$. Viewed as such, this mapping is not a chart, but it can be still used to pull-back the $L^2$ structure in order to project on finite dimensional submanifolds. The relation between the exponential manifold and the $L^2$ unit ball manifold is discussed by P. Gibilisco and G. Pistone \cite{gibilisco|pistone:98},  whereas D. Brigo et al. \cite{brigoieee} view the infinite dimensional evolution equation environment as the whole $L^2$ and so avoid the thorny question of defining an inifinite dimensional manifold structure related to the Hilbert structure. A more refined approach would be either considering an infinite dimensional manifold structure different from the $L^2$ structure, as we do here, or using a moving enveloping manifold for the finite dimensional exponential case \cite{brigobernoulli} from which one can project to the chosen finite dimensional exponential submanifold of densities.
 
In a context quite similar to our own, a new type of chart has been introduced by N. Newton in \cite{MR2948226,MR3126105,MR3356252}, namely $q \mapsto q - 1 + \log q - \expectat \mu {\log q}$. This map is restricted to densities which are in $L^2(\mu)$ and such that $\log p \in L^1(\mu)$. As this domain does not fit well with our exponential manifold, we postpone its study to further research. 

Recently, the larger framework of signed measures has been discussed with applications to Statistics, see N. Ay, J. Jost, H.V. L\^e, and L. Schwachh\"ofer \cite{AJLS:2015:arxiv:1510.07305} and their forthcoming book on \emph{Information Geometry} announced in \cite{AJLS:2015}.

As a further option, the identity representation $q \mapsto q \in L^2(\mu)$ has been shown to be of interest in our problem by J. Armstrong and D. Brigo in \cite{brigol2,armstrongbrigomcss}. This amounts to assuming that densities are square integrable and to using the $L^2$ norm directly for densities, rather than their square roots. This metric is called the ``direct $L^2$ metric" in \cite{armstrongbrigomcss}. The image of this mapping is no longer a subset of the unit sphere in $L^2$, and this has consequences when projecting evolution equations for unnormalized probability densities onto finite dimensional manifolds, in that the projection will not take care of normalization. The identity representation above could possibly be interpreted using the charts $q \mapsto \frac qp - 1$ of Equation \eqref{eq:m-chart} which belongs to $\preBspaceat p \subset L^2_0(p)$, but we do not consider this angle here. We just point out that the direct metric approach leads to a different metric and projection than the exponential statistical manifold, whereas the statistical manifold structure agrees with the $L^2$ Hellinger structure. We will see this explicitly later on in Section \ref{sec:directl2vshellinger}. 

We now proceed to present formal definitions of our approach.

\subsection{Model spaces}
\label{sec:modelspaces}
In a Banach manifold each chart of the atlas takes values in a Banach space. The model Banach spaces need not be equal, but they do need to be isomorphic on each connected component. It is the approach used for example by S. Lang in his textbook \cite{lang:1995}. We begin by recalling our definition of model spaces as introduced first in \cite{pistone|sempi:95} with the purpose of defining a Banach manifold on the set $\pdensities$ of strictly positive densities on a given measure space.

For each $p \in \pdensities$ the Young function $\Phi(x) = \cosh x - 1$ defines the Orlicz spaces $L^{\Phi}(p)$ of random variables $U$ such that $\expectat p {\Phi(\alpha U)} < +\infty$ for an $\alpha > 0$. On Orlicz spaces see for example the monograph by J. Musielak \cite{MR724434}. The vector space $L^\Phi(p)$ is the same as the set of random variables such that, for some $\epsilon > 0$, $\expectat p {\euler^{tU}} < \infty$ if $t \in ]-\epsilon, +\epsilon[$. In other words, the space is characterized by the existence of the moment generating function in a neighborhood of 0. This functional setting is implicit in the classical statistical theory. In fact, parametric exponential families are statistical models of the form
\begin{equation*}
  p(x;\theta) = \expof{\sum_{j=1}^d \theta_j U_j - \kappa(\bm\theta)} \cdot p,
\end{equation*}
where the so-called sufficient statistics $U_j$, $j=1,\dots,d$, necessarily belong to the Orlicz space $L^\Phi(p)$, see e.g. L.D. Brown monograph \cite{brown:86}. We will later adopt the notation $c$ for the sufficient statistics, in line with previous works by Brigo and co-authors on finite dimensional approximations. More generally, given a closed subspace $\mathcal V_p \subset L^\Phi(p)$, a $\mathcal V_p$-exponential family is the set of positive densities of the form $\euler^{U - \kappa(U)} \cdot p$.

We define the subspaces of centered random variables
\begin{equation*}
  \Bspaceat p = L_0^{\Phi}(p) = \setof{U \in L^\Phi(p)}{\expectat p {U}=0}
\end{equation*}
 to be used as model space at the density $p$. The norm of these spaces is the induced Orlicz norm from $L^{\Phi}(p)$. 

A critical issue of this choice of model spaces is the fact the Banach spaces $B_p$ are not reflexive and bounded functions are not dense if the sample space does not consist of a finite number of atoms. Technically, the $\Phi$-function lacks a property called $\Delta_2$ in the literature on Orlicz spaces. Precisely, if $\Psi$ the convex conjugate of $\Phi$, $\Psi(y) = \int_0^y (\Phi')^{-1}(v) \ dv$, $y > 0$, then the Orlicz space $L^\Psi(p)$ is $\Delta_2$, so that it is separable and moreover its dual is identified with $L^\Phi(p)$ in the pairing
\begin{equation*}
  L^\Phi(p) \times L^\Psi(p) \ni (U,V) \mapsto \scalarat p U V = \expectat p {UV}.
\end{equation*}

Moreover, a random variable $U$ belongs to $L^\Phi(p)$ if $\L^\Psi(p) \ni V \mapsto \expectat p {UV}$ is a bounded linear map. We write $\preBspaceat p = L^\Psi_0(p)$ so that there is separating duality $\Bspaceat p \times \preBspaceat p \ni (U,V) \mapsto \expectat p {UV}$. In this duality, the space $\preBspaceat p$ is identified with the elements of the pre-dual of $B_p$ which are centered random variables.

If the sample space is not finite, not all $\Bspaceat p$ are isomorphic, but we have the following crucial result, see G. Pistone and M.-P. Rogantin \cite{MR1704564}, \cite{MR2396032}, M. Santacroce, P. Siri, and B. Trivellato in \cite{santacroce|siri|trivellato:2015}. Before the theorem we need a definition.

\begin{definition}\label{def:KpGp}
\begin{enumerate}
\item For each $p \in \pdensities$, the \emph{moment generating functional} is the positive lower-semi-continuous convex function $G_p \colon B_p \ni U \mapsto \expectat p {\euler^U}$ and the \emph{cumulant  generating functional} is the non-negative lower semicontinuous convex function $K_p = \log G_p$. The interior of the common proper domain $\setof{U}{G_p(U) < +\infty}^\circ = \setof{U}{K_p(U) < \infty}^\circ$ is an open convex set $\sdomainat p$ containing the open unit ball (for the Orlicz norm). 
\item For each $p \in \pdensities$, the \emph{maximal exponential family} at $p$ is
\begin{equation}\label{eq:domaine}
  \maxexpat p = \setof{\euler^{u - K_p(u)}\cdot p}{u \in \sdomainat p}.
\end{equation}
\item
Two densities $p,q \in \pdensities$ are \emph{connected by an open exponential arc}, $p \smile q$, if there exists a one-dimensional exponential family containing both in the interior of the parameters interval. Equivalently, for a neighborhood $I$ of $[0,1]$
\begin{equation*}
 \int_{\Omega} p^{1-t}q^t \ d\mu = \expectat p {\left(\frac qp\right)^t} = \expectat q {\left(\frac pq\right)^{1-t}} < +\infty, \quad t \in I \ .
 \end{equation*}
\end{enumerate}
 \end{definition}

\begin{theorem}[Portmanteau Theorem]
\label{prop:maxexp-pormanteau}
Let $p, q \in \pdensities.$ The following statements are equivalent:
\begin{enumerate}
\item \label{prop:maxexp-pormanteau-1} $p \smile q$ (i.e. $p$ and $q$ are connected by an open exponential arc);
  \item \label{prop:maxexp-pormanteau-2} $q \in \maxexpat p$;
  \item \label{prop:maxexp-pormanteau-3} $\maxexpat p = \maxexpat q$;
  \item \label{prop:maxexp-pormanteau-4} $\log \frac q p \in L^{\Phi}(p) \cap L^{\Phi}(q)$;
  \item \label{prop:maxexp-pormanteau-5} $L^{\Phi}(p)=L^{\Phi}(q)$ (i.e. they both coincide  as vector spaces and their norms are equivalent);
  \item \label{prop:maxexp-pormanteau-6} There exists $\varepsilon >0$ such that $\frac{q}{p} \in L^{1+\varepsilon}(p)$ and $\frac{p}{q} \in L^{1+\varepsilon}(q)$.
\end{enumerate}
\end{theorem}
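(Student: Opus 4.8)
The plan is to route all six conditions through a single auxiliary function and to split the work into an elementary ``analytic'' cluster and a deeper ``geometric'' one. Set
\begin{equation*}
 M(t) := \int_\Omega p^{1-t}q^{t}\ d\mu = \expectat p {(q/p)^{t}}, \qquad t \in \Rx,
\end{equation*}
with values in $(0,+\infty]$ and $M(0)=M(1)=1$. Hölder's inequality makes $M$ log-convex, so its effective domain $D=\setof{t}{M(t)<+\infty}$ is an interval. I claim (1), (4), (6) are three readings of the single statement ``$D$ contains an open interval about $[0,1]$''. Indeed (1) is this statement by definition. For (6): $q/p\in L^{1+\varepsilon}(p)$ means $M(1+\varepsilon)<+\infty$, while $p/q\in L^{1+\varepsilon}(q)$ means $\expectat q {(p/q)^{1+\varepsilon}}=M(-\varepsilon)<+\infty$, and because $D$ is an interval the joint membership of $-\varepsilon$ and $1+\varepsilon$ is equivalent to $[-\varepsilon,1+\varepsilon]\subset D$. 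For (4): $\log(q/p)\in L^{\Phi}(p)$ says $M$ is finite near $0$, and $\log(q/p)\in L^{\Phi}(q)$ says $\expectat q {(q/p)^{s}}=M(1+s)$ is finite for $s$ near $0$, i.e.\ $M$ is finite near $1$; the interval property upgrades finiteness near both endpoints to finiteness on a neighbourhood of all of $[0,1]$. Thus (1)$\Leftrightarrow$(4)$\Leftrightarrow$(6) needs only one convexity lemma and no change of measure.

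Next I would turn to the geometric cluster (1)$\Leftrightarrow$(2)$\Leftrightarrow$(3). The implication (2)$\Rightarrow$(1) is a direct computation: if $q=\expof{u-K_p(u)}\cdot p$ with $u\in\sdomainat p$ then $M(t)=\expof{-tK_p(u)}G_p(tu)$, and since $\sdomainat p$ is open and convex and contains $0$ and $u$ it contains $tu$ for $t$ ranging over a neighbourhood of $[0,1]$, so $M$ is finite there. For (1)$\Rightarrow$(2) I would set $u:=\log(q/p)-\expectat p {\log(q/p)}=\log(q/p)+\KL p q$; the analytic cluster gives $\log(q/p)\in L^{\Phi}(p)$, hence $u\in\Bspaceat p$, and a one-line computation yields $K_p(u)=\KL p q$ and $\expof{u-K_p(u)}\cdot p=q$. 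The genuine content is that $u$ lies in the \emph{open} domain $\sdomainat p$ rather than merely in $\setof{w}{K_p(w)<+\infty}$: finiteness of $M$ along the segment through $0$ and $u$ places $u$ in the domain of $K_p$, but openness in the Orlicz norm does not follow from one-dimensional room alone. This is exactly the point at which I would invoke the structural facts recorded with $G_p,K_p$ --- lower semicontinuity, and the inclusion of the open unit ball in $\sdomainat p$ --- to convert the two-sided room $D\supset(-\delta,1+\delta)$ into a genuine norm-neighbourhood of $u$. Finally (2)$\Leftrightarrow$(3): the implication (3)$\Rightarrow$(2) is trivial since $q\in\maxexpat q$, whereas (2)$\Rightarrow$(3) requires that $\smile$ be symmetric (immediate from the symmetry of the integral condition under $p\leftrightarrow q$) and \emph{transitive}, and hence that $\maxexpat p=\maxexpat q$.

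For the Orlicz-space equality (5) I would prove (2)$\Rightarrow$(5) directly and then close the loop with (5)$\Rightarrow$(6). Given (2), take $V\in L^{\Phi}(p)$; for small $s$, $\expectat q {\euler^{sV}}=\expof{-K_p(u)}\expectat p {\euler^{u+sV}}$, and choosing by openness an exponent $r>1$ with $ru\in\sdomainat p$ and applying Hölder with conjugate $r'$ gives $\expectat p {\euler^{u+sV}}\le\bigl(\expectat p {\euler^{ru}}\bigr)^{1/r}\bigl(\expectat p {\euler^{r'sV}}\bigr)^{1/r'}<+\infty$ for $|s|$ small, so $V\in L^{\Phi}(q)$; the symmetric inclusion follows from the already-established $p\in\maxexpat q$, and equality of the two Banach spaces as sets forces equivalence of norms by the open mapping theorem. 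The remaining implication (5)$\Rightarrow$(6) is the delicate one: from the norm comparability $\Vert\cdot\Vert_{\Phi,p}\asymp\Vert\cdot\Vert_{\Phi,q}$ one must manufacture an $\varepsilon>0$ with $M(1+\varepsilon)<+\infty$ by testing the equivalence on a suitable family (e.g.\ truncations of $\log(q/p)$) and extracting integrability of $(q/p)^{1+\varepsilon}$.

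I expect the main obstacles to be twofold, both in the geometric cluster. The first is the norm-interior statement $u\in\sdomainat p$ in (1)$\Rightarrow$(2): reducing ``open exponential arc'' to ``maximal exponential family'' is precisely the hard theorem of Pistone--Rogantin and Cena--Pistone, and is where a careful Orlicz-norm estimate (not just one-parameter convexity) is unavoidable. The second is transitivity of $\smile$ behind (2)$\Rightarrow$(3), together with the direction (5)$\Rightarrow$(6), which is the Santacroce--Siri--Trivellato refinement; for these I would either reproduce their norm estimates or cite them, since the elementary convexity machinery that suffices for (1), (4), (6) does not reach them.
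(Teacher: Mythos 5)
The paper does not actually prove this theorem: it is imported verbatim from the literature, with the proof delegated to the cited works of Pistone--Rogantin, Cena--Pistone, and Santacroce--Siri--Trivellato. So there is no in-paper argument to compare against; what you have written is, in effect, a reconstruction of the standard proof architecture from those references, and it is a faithful one. Your central device --- the log-convex function $M(t)=\int p^{1-t}q^t\,d\mu$ whose effective domain is an interval, through which (1), (4) and (6) become three phrasings of ``$M<\infty$ on a neighbourhood of $[0,1]$'' --- is exactly the mechanism used in the literature (condition (6) was added by Santacroce--Siri--Trivellato precisely because it reads off $M$ at the two points $-\varepsilon$ and $1+\varepsilon$). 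Your computations for (2)$\Rightarrow$(1), for the identification $K_p(u)=\KL p q$ in (1)$\Rightarrow$(2), and the H\"older estimate for (2)$\Rightarrow$(5) are all correct, and you correctly isolate the two genuinely hard points. One remark on the first of these: the H\"older estimate you display for (2)$\Rightarrow$(5) is in fact the \emph{same} estimate that settles the interiority claim $u\in\sdomainat p$ in (1)$\Rightarrow$(2) --- from $K_p\bigl((1+\delta)u\bigr)<\infty$ and $\expectat p{\euler^{u+v}}\le\bigl(\expectat p{\euler^{(1+\delta)u}}\bigr)^{1/(1+\delta)}\bigl(\expectat p{\euler^{(1+\delta)v/\delta}}\bigr)^{\delta/(1+\delta)}$ one gets a whole Orlicz ball of radius $\delta/(1+\delta)$ around $u$ inside the proper domain --- so that obstacle dissolves with a tool you already have on the page.

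The one place where your sketch is thinner than it needs to be is (5)$\Rightarrow$(6). Testing the norm equivalence on truncations of $\log(q/p)$ runs into the problem that you cannot a priori bound $\expectat p{(q/p)^{-\alpha}}$ along the truncation, so uniform control of the Orlicz norms is not obvious. The cleaner test family is indicators: for $\Phi=\cosh-1$ the Luxemburg norm of $\mathbf 1_A$ is comparable to $1/\Phi^{-1}\bigl(1/p(A)\bigr)$, so equivalence of the $p$- and $q$-norms yields a power domination $q(A)\le C\,p(A)^{1/C}$ for all measurable $A$; combined with the Chebyshev-type bound $q(q/p>\lambda)\ge\lambda\,p(q/p>\lambda)$ this gives polynomial decay of $p(q/p>\lambda)$ and hence $q/p\in L^{1+\varepsilon}(p)$, with the symmetric statement by exchanging $p$ and $q$. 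Similarly, for (2)$\Rightarrow$(3) you should say explicitly that transitivity of $\smile$ follows from condition (6) by a three-exponent H\"older inequality ($q/p\in L^{1+\varepsilon}(p)$ and $r/q\in L^{1+\varepsilon'}(q)$ imply $r/p\in L^{1+\varepsilon''}(p)$), which is another payoff of having (6) in the equivalence class. With those two points filled in, the proof is complete and matches the route taken in the cited sources.
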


It follows from this structural result that the manifold we are going to define has connected components which are maximal exponential families. Hence we restrict our study to a given maximal exponential family $\maxexp$, where the mention of a reference density is not required any more.

\subsection{Exponential statistical manifold, statistical bundles}

Let $\maxexp$ be a maximal exponential family. The spaces $\Bspaceat p$, $p \in \maxexp$, are isomorphic under the affine mappings $\etransport p q \Bspaceat p \ni U \mapsto U - \expectat q U \in \Bspaceat q$, $p,q \in \maxexp$ and the pre-dual spaces $\preBspaceat p$, $p \in \maxexp$, are isomorphic under the affine mappings $\mtransport p q \preBspaceat p \ni U \mapsto \frac q p U \in \preBspaceat q$, $p,q \in \maxexp$. Such families of isomorphism are the relevant parallel transports in our construction. Precisely, $\etransport p q$ is the \emph{exponential} transport and $\mtransport p q$ is the \emph{mixture} transport and they are dual semigroups,
\begin{equation*}
  \scalarat q {\etransport p q U}V = \scalarat p U {\mtransport q p V} \quad \text{and} \quad \scalarat q W V = \scalarat p {\etransport p q W}{\mtransport p q V},
\end{equation*}
for $U \in \Bspaceat p$, $V,W\in\Bspaceat q$.

We review below some basic topics from \cite{pistone:2013Entropy} and \cite{lods|pistone:2015}.

\begin{definition}\label{def:bundles}
\begin{enumerate}
\item 
The \emph{exponential manifold} is the maximal exponential family $\maxexp$ with the affine atlas of global charts $(s_p \colon p \in \maxexp)$,
\begin{equation*}
  s_p(q) =\log\frac q p - \expectat p {\log\frac q p}.
\end{equation*}
\item
  The \emph{statistical exponential bundle} $\expbundle$ is the manifold defined on the set 
  \begin{equation*}
    \setof{(p,V)}{p \in \expmod, V \in \Bspaceat p}
  \end{equation*}
 by the affine atlas of global charts 
  \begin{equation*}
    \sigma_p \colon (q,V) \mapsto \left(s_p(q),\etransport q p V\right) \in \Bspaceat p \times \Bspaceat p, \quad p \in \maxexp
  \end{equation*}
\item \label{def:bundles1}
  The \emph{statistical predual bundle} $\mixbundle$ is the manifold defined on the set 
  \begin{equation*}
    \setof{(p,W)}{p \in \expmod, W \in \preBspaceat p}
  \end{equation*}
 by the affine atlas of global charts
  \begin{equation*}
    \prescript{*}{}\sigma_p \colon (q,W) \mapsto \left(s_p(q),\mtransport q p W\right) \in \Bspaceat p \times \preBspaceat p, \quad p \in \maxexp
  \end{equation*}
\end{enumerate}
\end{definition}

It should be noted that the full statistical manifold on positive densities actually splits into connected components which are exponential manifolds $\maxexp$ and that all the charts of  the affine atlases have global domains.

The statistical bundle $\expbundle$ is a specific version of the tangent bundle of the exponential manifold. In fact, if we define $\euler_p = s_p^{-1}$, we have $\euler_p(U) = \euler^{U-K_p(U)}\cdot p$ and for each regular curve $p(t) = \euler^{U(t) - K_p(U(t))} \cdot p$, $U(\cdot) \in C^1(I;B_p)$ the velocity of the expression in the $s_p$ chart is $\dot U(t) \in \Bspaceat p$; viceversa, for each $U \in \Bspaceat p$ we have the regular curve $t \mapsto \euler^{tU-K_p(tU)}\cdot p$.

The general notions of velocity and gradient take a specific form in the statistical bundle. Let $t \mapsto p(t)$ be a regular curve in the exponential manifold and let $f \colon \maxexp \to \reals$ be a regular function.

\begin{definition}
  \begin{enumerate}
  \item The \emph{score} of the curve $t \mapsto p(t)$ is the curve $t \mapsto (p(t),Dp(t)) \in \expbundle$ such that
    \begin{equation*}
      \derivby t \expectat {p(t)} V = \scalarat {p(t)} {V-\expectat {p(t)} V}{Dp(t)} 
    \end{equation*}
for all $V \in L^\Psi(p)$, $p \in \maxexp$.
\item The \emph{statistical gradient} of $f$ is the section $\maxexp \ni p \mapsto (p,\grad f(p)) \in \mixbundle$ such that for each regular curve
  \begin{equation*}
    \derivby t f(p(t)) = \scalarat {p(t)} {\grad f(p(t))}{Dp(t)} .
  \end{equation*}
  \end{enumerate}
\end{definition}

In most cases we are able to identify the score as $Dp(t) = \frac{\dot p(t)}{p(t)} = \derivby t \log p(t)$.  

We turn now to the regularity properties of the cumulant generating funtion.

\begin{proposition}[Properties of the CGF] \label{prop:CGF} Let $K_p$ be the
cumulant generating functional at $p \in \maxexp$ and let $\sdomainat p$ be the interior of the proper domain.
\begin{enumerate}
\item $K_p \colon \sdomainat p \to \reals$ is 0 at 0, otherwise is strictly positive; it is convex and
infinitely Fr\'echet differentiable. The value at 0 of
the differential of order $n$ in the direction $U_1,\dots,U_n \in \Bspaceat p$ is the value of the $n$-th joint cumulant under $p$ of the random variable $U_1,\dots,U_n$.
\item The value at $U \in \sdomainat p$ of
the differential of order $n$ in the direction $U_1,\dots,U_n \in \Bspaceat p$ is the value of the $n$-th joint cumulant under $q = \euler_p(U) = \euler^{U - K_p(U)} \cdot p$ of the random variable $U_1,\dots,U_n$, namely

 \begin{equation*} \nDifferential n {K_p\left(U\right)} {[U_1,\dots,U_n]}
=\left. \frac{\partial^n}{\partial t_1 \cdots \partial t_n} \log \expectat q {e^{t_1 U_1 + \cdots + t_n U_n}}\right|_{t=0}.
\end{equation*}
\item \label{item:firsttwo} In particular, $\frac q p -1 \in
\preBspaceat p$ and
\begin{align} &\Differential {K_p(U)}[V] = \expectat q
V = \scalarat p {\frac q p - 1}V \label{eq:DK} \\ &\nDifferential 2
{K_p(U)}{[U_1,U_2]} = \covat q {U_1}{U_2} = \scalarat q {\etransport p q U_1} {\etransport p q U_2} \label{eq:D2K} .
\end{align}
\end{enumerate}
\end{proposition}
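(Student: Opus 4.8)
The plan is to dispatch the three items in order, obtaining items 2 and 3 from item 1 by a single translation identity, and isolating the genuine analytic difficulty in the smoothness claim.

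\emph{Elementary properties in item 1.} First I would record that $K_p(0) = \log \expectat p {1} = 0$. For $U \in \Bspaceat p$ with $U \neq 0$, strict positivity follows from the strict form of Jensen's inequality applied to the strictly convex exponential: since $\expectat p U = 0$, we get $\expectat p {\euler^U} > \expof{\expectat p U} = 1$, whence $K_p(U) > 0$. Convexity is Hölder's inequality in disguise: for $t \in [0,1]$ one has $\expectat p {\euler^{tU + (1-t)V}} = \expectat p {(\euler^U)^t (\euler^V)^{1-t}} \le G_p(U)^t G_p(V)^{1-t}$, and taking logarithms gives $K_p(tU + (1-t)V) \le t K_p(U) + (1-t) K_p(V)$.

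\emph{The differentiability, which is the main obstacle.} The substantive step is infinite Fr\'echet differentiability of $K_p$ on the open set $\sdomainat p$. I would establish this in two stages. First, that $G_p$ is smooth on $\sdomainat p$ with $\nDifferential n {G_p(U)}{[U_1,\dots,U_n]} = \expectat p {U_1 \cdots U_n\, \euler^U}$, obtained by differentiating under the integral sign. This is exactly where the Orlicz structure is indispensable: on the open domain one has exponential integrability with room to spare, so that for $U$ in a small ball the integrands $\lvert U_1 \cdots U_n\rvert\,\euler^U$ admit a common dominating function, via H\"older in the conjugate pair $(L^\Phi(p), L^\Psi(p))$ together with the fact (Definition \ref{def:KpGp}) that $\sdomainat p$ contains the open unit ball. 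Since $G_p$ is smooth and strictly positive, $K_p = \log G_p$ is smooth by the chain rule. This differentiability is precisely the content of the theory cited above \cite{pistone|sempi:95,MR2396032}, which I would invoke or reproduce at the level of the domination estimate.

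\emph{Cumulants at the origin and item 2 by translation.} For the cumulant identification in item 1, I would restrict $K_p$ to the linear slice $(t_1,\dots,t_n) \mapsto K_p(\sum_j t_j U_j) = \log \expectat p {\euler^{\sum_j t_j U_j}}$, which is the ordinary joint cumulant generating function of $(U_1,\dots,U_n)$ under $p$; because the slice map is linear, the chain rule identifies its mixed partial at $0$ with $\nDifferential n {K_p(0)}{[U_1,\dots,U_n]}$, and by definition the mixed partials of a cumulant generating function are the joint cumulants. Item 2 then follows from the identity
$$ K_p(U + W) = \log \expectat p {\euler^U \euler^W} = K_p(U) + \log \expectat q {\euler^W} = K_p(U) + K_q(W), $$
valid for $q = \euler_p(U)$ because $\euler^U = \frac qp\, \expof{K_p(U)}$. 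Since $K_p(U)$ does not depend on $W$, differentiating $n$ times in $W$ yields $\nDifferential n {K_p(U)}{[U_1,\dots,U_n]} = \nDifferential n {K_q(0)}{[U_1,\dots,U_n]}$, and item 1 applied at $q$ identifies the right-hand side with the $n$-th joint cumulant under $q$. (The same identity shows $\operatorname{dom} K_q = \operatorname{dom} K_p - U$, which is the affine change-of-chart structure of the atlas.)

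\emph{The explicit low-order formulas of item 3.} Finally I specialize to $n = 1, 2$. At first order, item 2 gives $\Differential {K_p(U)}[V] = \expectat q V$; writing $\expectat q V = \expectat p {\frac qp V} = \scalarat p {\frac qp - 1}V$, where the constant drops because $\expectat p V = 0$, produces \eqref{eq:DK}. To justify $\frac qp - 1 \in \preBspaceat p = L^\Psi_0(p)$, note it is $p$-centered since $\expectat p {\frac qp - 1} = 0$, and that by item \ref{prop:maxexp-pormanteau-6} of the Portmanteau Theorem \ref{prop:maxexp-pormanteau} there is $\varepsilon > 0$ with $\frac qp \in L^{1+\varepsilon}(p)$; as $\Psi$ grows like $y \log y$ one has $L^{1+\varepsilon}(p) \subset L^\Psi(p)$, giving the membership. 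At second order, item 2 gives the second joint cumulant under $q$, namely $\nDifferential 2 {K_p(U)}{[U_1,U_2]} = \covat q {U_1}{U_2}$; expanding the covariance and recognizing $U_i - \expectat q {U_i} = \etransport p q U_i$ gives $\covat q {U_1}{U_2} = \scalarat q {\etransport p q U_1}{\etransport p q U_2}$, which is \eqref{eq:D2K}.
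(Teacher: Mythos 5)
Your proposal is correct, and it follows the standard route: the paper itself states this proposition without proof, deferring to the cited literature (Pistone--Sempi \cite{pistone|sempi:95}, Cena--Pistone \cite{MR2396032}), and your argument is essentially the one found there. The reduction of items 2 and 3 to item 1 via the translation identity $K_p(U+W) = K_p(U) + \log\expectat q{\euler^W}$ is exactly the affine change-of-chart mechanism, and your low-order specializations, including the $L^{1+\varepsilon}(p)\subset L^{\Psi}(p)$ justification of $\frac qp - 1\in\preBspaceat p$ via the Portmanteau theorem, are all sound. The only place where your write-up is a sketch rather than a proof is the one you correctly flag as the substantive obstacle: the uniform domination estimate that upgrades term-by-term differentiation of $G_p$ to genuine Fr\'echet (not merely G\^ateaux) differentiability of all orders, with bounded $n$-linear differentials on $\Bspaceat p^n$ and controlled Taylor remainders. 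That estimate is precisely the technical content of the cited references, so invoking them there is legitimate; a minor notational caveat is that your $K_q(W)$ should be read as $\log\expectat q{\euler^W}$ for $W\in\Bspaceat p$ not necessarily $q$-centered, which is harmless since the statement of item 2 is phrased in exactly those terms.
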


Equations~\eqref{eq:DK} and~\eqref{eq:D2K} above show that the geometry of the exponential manifold is fully encoded in the cumulant generating function $K_p$.  The relevant abstract structure is called Hessian manifold, cf Hirohiko Shima's monograph~\cite{shima:2007}.

\subsection{Maximal exponential families of Gaussian type}
\label{sec:gauss}
In this section we study the specific case of the statistical manifold whose components allow for including the Gaussian density (the Gaussian space case), or a generalised Gaussian density. The aim is to develop a framework where partial differential equations are naturally defined. 

Let $M$ be the standard Gaussian density (Maxwell density) on the $d$-dimensional real space. The maximal exponential family $\maxexpat M$ has special features that we review below from \cite[Sec. 4 and 6]{lods|pistone:2015}. Note that in that reference the Young functions $\Phi$ and $\Psi=\Phi_*$ were explicitly denoted as $(\cosh -1)$ and $(\cosh-1)_*$, respectively.

\begin{proposition} \ 
  \begin{enumerate}
  \item The Orlicz space $L^\Phi(M)$ contains all polynomials of degree up to two.
  \item The Orlicz space $L^{\Psi}(M)$ contains all polynomials.
\item   The entropy $H \colon \maxexpat M \ni p \mapsto - \expectat p {\log p}$ is finite and Frech\'et differentiable with statistical gradient $\grad H(p) = -(\log p + H(p))$.
\end{enumerate}
\end{proposition}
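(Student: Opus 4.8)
\noindent\emph{Proof plan.}

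For statement (1) I would use that $U\in L^\Phi(M)$ iff its moment generating function $\expectat M{\euler^{tU}}$ is finite for $t$ in a neighbourhood of $0$. Writing a generic polynomial of degree at most two as $U(x)=x^\top Cx+b^\top x+a$ gives
\[
\expectat M{\euler^{tU}}=(2\pi)^{-d/2}\int_{\reals^d}\exp\!\left(x^\top\!\big(tC-\tfrac12 I\big)x+t\,b^\top x+ta\right)dx ,
\]
which converges for $|t|$ small because $tC-\tfrac12 I$ is then negative definite; the only point to check is the elementary spectral bound $tC\prec\tfrac12 I$ for small $t$. For (2) I would identify $\Psi=\Phi_\ast$: from $\Phi'(x)=\sinh x$ one gets $(\Phi')^{-1}=\operatorname{arcsinh}$ and $\Psi(y)=y\operatorname{arcsinh} y-\sqrt{y^2+1}+1\sim|y|\log|y|$. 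Hence $\Psi(y)\le C_s(1+|y|^s)$ for each $s>1$, so $L^s(M)\subseteq L^\Psi(M)$; since the Gaussian $M$ has finite moments of all orders, every polynomial lies in $\bigcap_{s\ge1}L^s(M)\subseteq L^\Psi(M)$.

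For (3), finiteness comes first. By the Portmanteau Theorem~\ref{prop:maxexp-pormanteau}, $p\in\maxexpat M$ gives $L^\Phi(p)=L^\Phi(M)$; writing $p=\euler^{u-K_M(u)}\cdot M$ with $u\in\sdomainat M\subseteq\Bspaceat M$ yields $\log p=u-K_M(u)+\log M$. Since $u\in L^\Phi(M)$ and $\log M=-\tfrac d2\log(2\pi)-\tfrac12|x|^2$ is quadratic, hence in $L^\Phi(M)$ by~(1), we get $\log p\in L^\Phi(p)$; pairing with $1\in L^\Psi(p)$ shows $H(p)=-\scalarat p{\log p}{1}$ is finite. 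Because $\Phi$ dominates $\Psi$ we also have $L^\Phi(p)\subseteq L^\Psi(p)$, so $\log p$, and hence its $p$-centering $-(\log p+H(p))$, already lies in $\preBspaceat p=L^\Psi_0(p)$, the space where the statistical gradient must live.

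For differentiability and the gradient formula I would avoid differentiating $-\int p\log p$ under the integral sign and instead reduce to the smoothness of $K_M$ from Proposition~\ref{prop:CGF}. Using \eqref{eq:DK} in the form $\expectat pV=DK_M(u)[V-\expectat MV]+\expectat MV$, the entropy reads in the chart $s_M$ as
\[
\widetilde H(u)=-DK_M(u)[u]+K_M(u)-DK_M(u)[c]-\expectat M{\log M},\qquad c:=\log M-\expectat M{\log M}\in\Bspaceat M,
\]
a composition of $K_M$ and $u\mapsto DK_M(u)[\cdot]$ with fixed directions, hence Fréchet differentiable. Differentiating in a direction $w\in\Bspaceat M$, the product rule on the radial term $DK_M(u)[u]$ produces a first-order term cancelling $DK_M(u)[w]$, and \eqref{eq:D2K} identifies the second differential with a $p$-covariance, giving $D\widetilde H(u)[w]=-D^2K_M(u)[w,u+c]=-\covat pw{\log p}=\scalarat p{-(\log p+H(p))}{w}$, where the last equality uses $\expectat p{\log p}=-H(p)$ and the centering of $\log p+H(p)$. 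Comparing with the defining identity $\derivby t H(p(t))=\scalarat{p(t)}{\grad H(p(t))}{Dp(t)}$, and noting that the score is the $p(t)$-centering of the chart velocity while $-(\log p+H(p))$ is already centered, yields $\grad H(p)=-(\log p+H(p))$.

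The main obstacle is this last computation. One must verify that both $u$ and $c=\log M-\expectat M{\log M}$ genuinely belong to $\Bspaceat M$, so that the cumulant identities \eqref{eq:DK}--\eqref{eq:D2K} of Proposition~\ref{prop:CGF} apply — this is precisely where part~(1) is indispensable — and one must treat the radial term $DK_M(u)[u]$, in which $u$ is simultaneously base point and direction, with the product rule. The shorter alternative of differentiating $-\int p_t\log p_t\,dx$ directly needs only $\int\dot p_t\,dx=0$ and the centering of the score, but there the burden merely shifts to justifying the interchange of $\derivby t$ with the integral, which again rests on the Orlicz-space bounds supplied by parts~(1) and~(2).
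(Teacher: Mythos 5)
The paper does not prove this proposition at all: it is quoted as a review of results from \cite[Sec.~4 and 6]{lods|pistone:2015}, so there is no internal argument to compare yours against. Your reconstruction is correct and self-contained. Parts (1) and (2) are exactly the standard computations: the moment-generating-function characterization of $L^\Phi$ handles quadratics against the Gaussian via the spectral bound $tC \prec \tfrac12 I$, and the asymptotics $\Psi(y)\sim |y|\log|y|$ give $L^s(M)\subseteq L^\Psi(M)$ for $s>1$, which absorbs all polynomials. Part (3) is the delicate one and you handle it correctly: writing $\log p = u - K_M(u) + \log M$ and noting $u+c$ differs from $\log p$ by a constant reduces everything to the $C^\infty$ regularity of $K_M$ from Proposition~\ref{prop:CGF}, the cancellation of the two $DK_M(u)[w]$ terms in the product rule on the radial term is right, and identifying $-D^2K_M(u)[w,u+c]$ with $-\covat{p}{w}{\log p}$ via \eqref{eq:D2K} lands exactly on the claimed gradient once you observe that $\log p + H(p)$ is $p$-centered and lies in $L^\Psi_0(p)=\preBspaceat p$, the fiber where a statistical gradient must live. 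Two points you rightly flag and that are indeed the load-bearing hypotheses: $c=\log M - \expectat M{\log M}\in\Bspaceat M$ rests on part (1), and the pairing argument for finiteness rests on the inclusion $L^\Phi(p)\subseteq L^\Psi(p)$, valid on a probability space because $y\log y$ is eventually dominated by $\cosh y - 1$. No gaps.
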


Let us compute the action  on a density $p \in \maxexpat M$ of our running example of partial differential operator in Equation \eqref{eq:heath}, assuming all the needed differentiability. We write $p = \euler^{U - K_M(U)} \cdot M$, $U \in \sdomainat M$, and use repeatedly the equality $\expectat M {f \partiald {x_j} g} = \expectat M {(X_jf -  \partiald {x_j} f)g}$ to get the following:
\begin{equation}\label{eq:runningfirst}
  \partiald {x_j} p(x) = \partiald {x_j} \left(\euler^{U(x) - K_M(U)} M(x)\right) = \left(\partiald {x_j} U(x) - x_j\right) p(x) \ .
\end{equation}

\begin{multline}
\label{runningcomputation}
\partiald {x_i}\left(a_{ij}(x)\partiald {x_j} p(x) \right)= 
\partiald {x_i}\left(a_{ij}(x) \left(\partiald {x_j} {U(x)} - x_j\right) p(x)\right)  = \\ 
\partiald {x_i}\left[a_{ij}(x)\left(\partiald {x_j} U(x) - x_j\right)\right] p(x) + \\ a_{ij}(x)\left(\partiald {x_i} U(x) - x_i\right)\left(\partiald {x_j} U(x) - x_j\right) p(x)
\end{multline} 
and
\begin{multline*}
  p^{-1}(x) \sum_{i,j} \partiald {x_i}\left(a_{ij}(x)\partiald {x_j} p(x)\right) = \\ \sum_{i,j} \partiald {x_i}\left[a_{ij}(x)\left(\partiald {x_j} U(x) - x_j\right)\right] + \\ \sum_{i,j} a_{ij}(x)\left(\partiald {x_i} U(x) - x_i\right)\left(\partiald {x_j} U(x) - x_j\right).
\end{multline*}

Note that the left hand side is a random variable whose expectation at $p = \euler^{U-K_M(U)} \cdot M$ is zero. Hence the right hand side is a candidate to be the expression in a chart of a section of the statistical predual bundle of Definition~\ref{def:bundles}(\ref{def:bundles1}).

\begin{example} 
If $[a_{ij}] = I$, then the expression of the PDE is
\begin{equation*}
  \partiald t U(x,t) = \Delta U(x) - d + \avalof{\nabla U(x) - x}^2,
\end{equation*}
and for $d=1$
\begin{equation*}
   \partiald t U(x,t) =  U''(x) - 1 + (U'(x) - x)^2.
\end{equation*}
This provides a simple example of finite dimensionality. Assume there is a solution of the form $U(x,t) = \theta_0(t) + \theta_1(t)x + \theta_2(t)x^2$, that is $p(x,t)$ is Gaussian. It follows
\begin{multline*}
  U''(x) - 1 + (U'(x) - x)^2 = \\ 2 \theta_2(t) + (\theta_1(t) + 2 \theta_2(t) x - x)^2 = \\ (\theta_1(t)^2 + 2 \theta_2(t)) + 2 \theta_1(t)(2 \theta_2(t) - 1)x + (2\theta_2(t)-1)^2x^2 
\end{multline*}
where the value of the constant $\theta_0(t)$ follows from the section condition $\expectat {p(t)} {U(t)} = 0$.
\end{example}

In the one-dimensional case $d=1$, we can generalize easily the density $M(x)$ to $M_{1,m}(x)$, with $m$ positive even integer, defined as
\begin{equation}\label{eq:generalisedgaussian}
  M_{1,m}(x) \propto \exp\left( -\frac{1}{m} x^m\right).
\end{equation}
We could keep the multivariate case but the combinatorial complexity would become quite challenging, so we explain our idea in the scalar case.

The density $M_{1,m}$, chosen as background density, allows one to have in the exponent of the densities monomial terms up to $x^{m-1}$ without any integrability problem, or up to $x^m$ with restriction on the parameters. Suppose, for example, that we need a family of densities flexible enough to include bimodal densities. A natural choice (see \cite{brigobernoulli,armstrongbrigomcss}) would be $m = 4$ and an exponential family of densities
\[ \propto \exp(\theta_1 x + \theta_2 x^2 + \theta_3 x^3 + \theta_4 x^4 ) \]
with parameters $\theta \in \Theta$, open conved domain. However, if $\theta_4$ goes to zero or even positive then we are in troubles. To avoid this, we may choose as background density $M_{1,6}$, so that
\[ \propto \exp(\theta_1 x + \theta_2 x^2 + \theta_3 x^3 + \theta_4 x^4 ) M_{1,6}(x) =
\exp(\theta_1 x + \theta_2 x^2 + \theta_3 x^3 + \theta_4 x^4 - (1/6) x^6)
 \]
will be always well defined as a probability density, for all $\theta$. We briefly mention that  densities such as the above have a number of computational advantages when used to obtain finite dimensional approximations of infinite dimensional evolution equations such as Fokker-Planck or Kushner-Stratonovich or Zakai. These advantages are related to an algebraic ring structure, see \cite{armstrongbrigomcss}.

Let us discuss the action of differential operators of interest on a density $p \in \maxexpat M_{1,m}$, assuming moreover the differentiability where needed. Dropping the index $(1,m)$ from $M$ for brevity, we write $p = \euler^{u - K_{M}(u)} \cdot M$, $u \in \sdomainat M$, to get the following 
\begin{equation*}
  \partiald {x} p(x) = \partiald {x} \left(\euler^{u(x) - K_M(u)} M(x)\right) = \left(\partiald {x} u(x) - x^{m-1}\right) p(x) \ .
\end{equation*}

\begin{multline*}
\partiald {x}\left( a(x)\partiald {x} p(x) \right) = 
\partiald {x}\left(a(x) \left(\partiald {x} {u(x)} - x^{m-1}\right) p(x)\right)  = \\ 
=\partiald {x}\left[a(x)\left(\partiald {x} u(x) - x^{m-1}\right)\right] p(x) + a(x)\left(\partiald {x} u(x) - x^{m-1}\right)^2 p(x)
\end{multline*} 
and
\begin{multline*}
  p^{-1}(x) \partiald {x}\left(a(x)\partiald {x} p(x)\right) = 
  \partiald {x}\left[a(x)\left(\partiald {x} u(x) - x^{m-1}\right)\right]  + \\ a(x)\left(\partiald {x} u(x) - x^{m-1}\right)^2 
\end{multline*}

\begin{example}
If $a = 1$, which in case $d=1$ is usually obtained from a general diffusion via the Lamperti transform, then the previous equation becomes
\begin{equation*}
  p^{-1}(x) \Delta p(x) = \Delta u(x) - (m-1) x^{m-2} + \avalof{\nabla u(x) - x^{m-1}}^2
\end{equation*}
\end{example}

An important feature of the statistical bundles $\expbundleat M$ and $\mixbundleat M$ is the possibility to define Orlicz-Sobolev spaces (see e.g. \cite{MR724434}) for the fibers and use this setup in the study of partial differential equations, cf. \cite[\S 6]{lods|pistone:2015}. 

\begin{definition}
\label{def:orlich-sobolev}
\ 
\begin{enumerate}
\item
The exponential Orlicz-Sobolev spaces of $\maxexpat M$ are the vector spaces
\begin{align*}
   W_{\Phi}^1 = \setof{f \in \Lexp M}{\partial_j f \in \Lexp M, j = 1, \dots, d} \\
  W_{\Psi}^1 = \setof{f \in \LlogL M}{\partial_j f \in \LlogL M, j = 1, \dots, d} 
\end{align*}
where $\partial_j$ is the derivative in the sense of distributions. These spaces become Banach spaces when endowed with the graph norm. The spaces defined with respect to any $p \in \maxexpat M$ are equal as vector space and isomorphic as Banach spaces.  
\item The $W_{\Phi}^1$-exponential family at $M$ is
  \begin{equation*}
    \dmaxexpat M = \setof{\euler^{u - K_M(U)} \cdot M}{U \in \sdomainat M \cap W_{\Phi}^1}
  \end{equation*}
The set $\dsdomainat M = \sdomainat M \cap W_{\Phi}^1$ is a convex open set 

\begin{equation*}
\dsdomainat M \subset \dBspaceat M = \setof{U \in W_{\Phi}^1}{\expectat M U = 0}  
\end{equation*}
 It contains all coordinate functions $X_i$  and polynomials of order two, cf \cite{pistone:2014SIS}.
\end{enumerate}
\end{definition}

The following proposition shows the regularity of the densities in the $W_{\Phi}^1$-exponential family $\dmaxexpat M$ and the Stein's identity in the Orlicz-Sobolev setup, cf. \cite[\S 6]{lods|pistone:2015}. It should be noted that these properties were actually needed above in the derivation of the expression of the running example of PDE.
\begin{proposition} 
Assume $U \in \dsdomainat M$, $p = \euler^{U - K_M(U)} \cdot M\in \dmaxexpat M$, and $f \in W^{1}_{\Phi}$. 
  \begin{enumerate}
  \item It follows $f \euler^{U-K_p(U)} \in W^{1}_{\Phi_{*}}$ and $f \euler^{U-K_p(U)} \cdot M = fp \in W^{1}_{\Phi_{*}}$.
\item $\nabla \euler^{U-K_p(U)} = \nabla U \euler^{U - K_p(U)}$ and $\nabla (\euler^{U-K_p(U)}M) = (\nabla U - \bX) \euler^{U - K_p(U)} M$.
\item (\emph{Multiplication operator}) If $f \in W_{\Psi}^{1}$, then $X_jf \in \LlogL M$.
\item (\emph{Stein's identity}) If $f \in W_{\Psi}^{1}$ and $g \in W_{\Phi}^1(M)$, then
   \begin{equation*}
      \scalarat M f{\partial_j g} = \scalarat M {X_j f - \partial_j f}{g}.
    \end{equation*}
  \end{enumerate}
\end{proposition}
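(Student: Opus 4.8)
The plan is to establish the four assertions in the order (2), (1), (3), (4): the weak calculus of (2) supplies the product rule needed in (1), while the multiplication estimate (3) is exactly what makes the right–hand pairing in Stein's identity (4) meaningful. Three structural ingredients will be used repeatedly. First, the chain and product rules for distributional derivatives. Second, the generalised Hölder inequality $\Lexp{M} \cdot \Lexp{M} \hookrightarrow \LlogL{M}$, which holds because $(\Phi^{-1}(t))^2 \lesssim \Psi^{-1}(t)$ for large $t$ (here $\Phi^{-1}(t) \sim \log t$, since $\Phi(x) = \cosh x - 1 \sim \tfrac12 \euler^x$, while $\Psi(y) \sim y\log y$ gives $\Psi^{-1}(t) \sim t/\log t$, and $(\log t)^3 \lesssim t$). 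Third, the mixture transport $\mtransport{M}{p} \colon \LlogL{M} \to \LlogL{p}$, $W \mapsto \tfrac pM W$, together with the identity $\LlogL{M} = \LlogL{p}$, both consequences of $p \smile M$ via the Portmanteau Theorem. I also record the inclusion $\Lexp{M} \subset \LlogL{M}$ (a finite measure and $\Psi$ growing slower than $\Phi$).

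For (2), since $K_p(U)$ is a constant the first identity reduces to $\nabla \euler^{U} = (\nabla U)\,\euler^{U}$ for $U \in W^1_\Phi$, which I would obtain by truncating or mollifying $U$, applying the classical chain rule, and passing to the limit in $L^1_{\mathrm{loc}}$, the passage being justified by the local integrability of $\euler^{U}$ and $(\nabla U)\euler^{U}$ in the Gaussian--Orlicz setting; the second identity is then the product rule combined with $\nabla M = -\bX M$ (as $\partial_j M = -x_j M$), recovering the factor $(\nabla U - \bX)$ of \eqref{eq:runningfirst}. For (1), set $h = f\,\euler^{U-K_p(U)} = \tfrac pM f$; by (2) and the product rule $\partial_j h = \tfrac pM\big(\partial_j f + f\,\partial_j U\big)$. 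Now $\partial_j f \in \Lexp{M} \subset \LlogL{M}$, while $f\,\partial_j U \in \LlogL{M}$ by the generalised Hölder inequality (both $f$ and $\partial_j U$ lie in $\Lexp{M}$), so $\partial_j f + f\,\partial_j U \in \LlogL{M}$, and likewise $f \in \LlogL{M}$; applying the mixture transport (the multiplier property of the density ratio $p/M$) and $\LlogL{p} = \LlogL{M}$ gives $h, \partial_j h \in \LlogL{M}$, i.e. $h \in W^1_{\Phi_*}$. Multiplying by $M$, $fp = hM$ and $\partial_j(fp) = (\partial_j h)M - h\,(x_j M)$ lie in $\LlogL{M}$ because $M$ and $X_j M$ are bounded, so $fp \in W^1_{\Phi_*}$ with no further input.

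Claim (3) is where I expect the real work, since one must gain the unbounded factor $x_j$ while remaining in $\LlogL{M}$, and this is the step where the Sobolev regularity is indispensable. The guiding identity is $X_j f = \partial_j f - M^{-1}\partial_j(fM)$, which exhibits $X_j f$ as the result of trading one distributional derivative for one power of $|x|$ against the Gaussian weight. Concretely I would prove the Orlicz bound $\|X_j f\|_{\LlogL{M}} \lesssim \|f\|_{\LlogL{M}} + \|\partial_j f\|_{\LlogL{M}}$ by estimating $\int \Psi(\beta x_j f)\,M\,dx$ directly, splitting on $\{|x_j| \le R\}$ and $\{|x_j| > R\}$ and using the explicit growth of $\Psi$ against the Gaussian tails; this is the Orlicz analogue of the elementary $L^2(M)$ inequality $\|x_j f\|_2 \lesssim \|f\|_2 + \|\partial_j f\|_2$ coming from the Ornstein--Uhlenbeck structure. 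This is the main obstacle, being the one genuinely new inequality.

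Finally (4) is integration by parts made rigorous. By (3), $X_j f - \partial_j f \in \LlogL{M}$, so the right–hand side is a legitimate $\LlogL{M} \times \Lexp{M}$ pairing. Because bounded functions need not be dense in $\Lexp{M}$ (failure of $\Delta_2$), I would not approximate $g$ in $\Lexp{M}$; instead I would observe that $fM \in W^{1,1}(\reals^d, dx)$ — the Gaussian decay together with $x_j f \in L^1(M)$ (from (3)) give integrability of $fM$ and of $\partial_j(fM) = (\partial_j f - x_j f)M$ — then approximate $fM$ by $C^\infty_c$ functions in the ordinary Sobolev norm, test the distributional derivative of $g$ against them, and pass to the limit, the decay annihilating any boundary contribution. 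Rearranging $\partial_j(fM) = (\partial_j f - x_j f)M$ then yields $\scalarat{M}{f}{\partial_j g} = \scalarat{M}{X_j f - \partial_j f}{g}$.
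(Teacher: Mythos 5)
The paper gives no proof of this proposition: it is quoted from \S 6 of the Lods--Pistone reference, so your argument can only be judged on its own terms. For items (1), (2) and (4) your architecture is sound and essentially the standard one: the chain rule for $\euler^U$ by Lipschitz truncation and passage to the limit in $L^1_{\mathrm{loc}}$, the generalised H\"older inclusion $L^\Phi(M)\cdot L^\Phi(M)\hookrightarrow L^\Psi(M)$ (your verification via $(\Phi^{-1}(t))^2\lesssim\Psi^{-1}(t)$ is correct), the multiplier property of $p/M$ on $L^\Psi$ coming from the Portmanteau theorem, and an integration by parts for Stein's identity that approximates on the $\Delta_2$ side rather than in $L^\Phi(M)$. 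One small caution on (4): approximating $fM$ in $W^{1,1}(\reals^d,dx)$ is not by itself enough to pass to the limit in $\int \phi_n\,\partial_j g\,dx$ when $\partial_j g$ is only in $L^\Phi(M)\subset L^1_{\mathrm{loc}}$; you should approximate $f$ in the $W^1_\Psi$ norm (legitimate, since $\Psi$ is $\Delta_2$ and smooth compactly supported functions are dense there) and then pair with $\partial_j g\in L^\Phi(M)$ by Orlicz duality.

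The genuine gap is item (3), which you correctly identify as the crux and then do not prove. The strategy you announce --- estimate $\int\Psi(\beta x_jf)M\,dx$ by splitting on $\{|x_j|\le R\}$ and $\{|x_j|>R\}$ and ``using the growth of $\Psi$ against the Gaussian tails'' --- fails as described: on $\{|x_j|>R\}$ the Gaussian weight gives no control of $x_jf$ without information on $f$ there, and the hypothesis $f\in L^\Psi(M)$ alone only yields $x_jf\in L^1(M)$ (pair $f\in L^\Psi(M)$ with $x_j\in L^\Phi(M)$); one can build $f\in L^\Psi(M)$ with $x_jf\notin L^\Psi(M)$, so no tail argument that ignores $\partial_jf$ can succeed. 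The derivative must enter through the identity you write down and then abandon, $x_jf\,M=\partial_jf\,M-\partial_j(fM)$. Concretely, with $G(y)=|y|\log(1+|y|)\asymp\Psi(y)$, move $|x_j|M=-\operatorname{sgn}(x_j)\,\partial_jM$ onto $G(f)$ by integration by parts to get
\begin{equation*}
\int |x_j|\,G(f)\,M\,dx \;\lesssim\; \int \bigl(\log(1+|f|)+1\bigr)\,|\partial_jf|\,M\,dx + C,
\end{equation*}
then control $|\partial_jf|\log(1+|f|)\le |\partial_jf|\log(1+|\partial_jf|)+(1+|f|)$ by the elementary Young inequality $uv\le u\log(1+u)+\euler^{v}$; the remaining contribution $\int|x_jf|\log(1+|x_j|)M\,dx$ is finite by H\"older since $x_j^2\in L^\Phi(M)$. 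Some such argument is indispensable; without it item (3) is unproven, and item (4), whose right-hand pairing only makes sense once $X_jf-\partial_jf\in L^\Psi(M)$ is established, is left hanging with it.
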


We now define a differentiable version of the statistical bundles.

\begin{definition}
\label{def:diffbundle}
\begin{enumerate}
\item \label{def:diffbundle1}
  The (statistical) \emph{differentiable exponential bundle} is the manifold defined on the set 
  \begin{equation*}
  \dexpbundleat M  =  \setof{(p,V)}{p \in \dmaxexpat M, V \in \dBspaceat p}
  \end{equation*}
 by the affine atlas of global charts 
  \begin{equation*}
    \sigma_p \colon (q,V) \mapsto \left(s_p(q),\etransport q p V\right) \in \dBspaceat p \times \dBspaceat p, \quad p \in \dmaxexpat M
  \end{equation*}
\item \label{def:diffbundle2}
  The (statistial) \emph{differentiable predual bundle} is the manifold defined on the set of fibers 
  \begin{equation*}
   \dmixbundleat M =  \setof{(p,V)}{p \in \dmaxexpat M, V \in \predBspaceat p}
  \end{equation*}
 by the affine atlas of global charts 
  \begin{equation*}
\prescript{*}{}\sigma_p \colon \dmixbundleat M \ni (q,V) \mapsto \left(s_p(q), \mtransport q p V\right) \in \dBspaceat p \times \predBspaceat p \ ,
   \end{equation*}
\end{enumerate}
\end{definition}

We have given a setup such that we can look at a parabolic equation $\partiald t p(x,t) = \mathcal L p(x,t)$ as the equation $p(x,t)^{-1}\partiald t p(x,t) = p(x,t)^{-1}\mathcal L p(x,t)$, where the left hand side is the score of the solution curve $t \mapsto p(t)$ and the right hand side is a section of an appropriate statistical bundle. This type of equation requires the development of a full theory. We here restrict to finite dimensional cases, where the section is actually a section of a finite dimensional submodel.

\section{Submodels and submanifolds}\label{sec:submodels}

Before turning to the main topic of this paper, namely finite dimensional approximations, requiring finite dimensional subspaces structures to be introduced, we study more general subspaces structures that can still be infinite dimensional in general. In particular, this will lead to a first definition of exponential and mixture families associated to subspaces. We will see that while this general exponential family subspace will be similar to the finite dimensional case we will use for the approximation later, the mixture case is subtler, as there are two different notions of mixture family that may however coincide in special cases.

We first consider the following adaptation of the standard definition of sub-manifold, as it is for example given in the monograph \cite{lang:1995} or that by R. Abraham, J.E. Marsden and T. Ratiu \cite{MR960687}. Our definition is tentative and it is intended to go along with the special features of the exponential manifold $\maxexp$, namely the duality between the pre-fibers $\preBspaceat p$ and the fibers $\Bspaceat p$, $p \in \maxexp$. We shall consider two types of substructure, that we call respectively sub-model and sub-manifold.

\begin{definition}[Sub-model, sub-manifold]\label{def:submani}
Let $\Cal N$ be a subset of the maximal exponential family $\maxexp$ and,
for each density $p \in {\Cal N}$, let $V_p^1$ be a closed subspace of $\Bspaceat p$ and $V_p^2$ a closed subspace of $\preBspaceat p$, such that  $V_p^1 \cap V_p^2 = \set{0}$ with continuous immersions $\Bspaceat p \hookrightarrow V_p^1 \oplus V_p^2 \hookrightarrow \preBspaceat p$. Let $\sigma$ be a diffeomorphism of a neighborhood $\Cal W_p$ of $p$ onto the product of two open sets $\Cal V_p^1 \times \Cal
V_p^2$ of $V_p^1 \times V_p^2$ that maps $\mathcal N \cap \Cal W_p$ onto $\Cal V_p^1 \times \set 0$. Assume there exists an atlas $\Sigma$ of such mappings $\sigma$  that covers $\mathcal N$.
\begin{enumerate}
\item \label{item:submani1}
It follows that $\mathcal N$ is a manifold with charts $\sigma_{|\mathcal N}$, $\sigma \in \Sigma$, with tangent spaces $T_p \mathcal N$ isomorphic to $V_p$, $p \in \mathcal N$. We say that such a manifold is a \emph{sub-model} of $\maxexp$.
\item \label{item:submani2}
If the space $V_p^2$ is a closed subspace of $\Bspaceat p$, that is $V_p^1$ \emph{splits} in $\Bspaceat p$, then $\mathcal N$ is a \emph{sub-manifold} of $\maxexp$.
\end{enumerate}
\end{definition}

It should be noted that the splitting condition in Item 2 above is quite restrictive in our context. In fact, while a closed subspace of an Hilbert space always splits with its orthogonal complement, the same is not generally true in our Orlicz spaces. It is generally true only in the finite state space case. However, in the applications we are looking for, either the space $V_p^1 \subset B_p$ or the space $V_p^2 \in \preBspaceat p$ is finite dimensional. Each one of these assumptions allows for a special treatment, as it is shown in the following sections. 

The submanifold issue was originally discussed in \cite{MR1704564}. In particular, it was observed there that each $p$-conditional expectation provides a splitting in $\Bspaceat p$, because $U \mapsto \condexpectat p {U}{\mathcal Y}$ is an idempotent continuous linear mapping on $\Bspaceat p$. The complementary space is the kernel of the conditional expectation. It follows, for example, that each marginalization is a submersion of the exponential manifolds.

The classical theory of parametric exponential families (see \cite{brown:86}) uses a special splitting of the parameter's space which is called mixed parameterization. Our approach actually mimics the same approach in a more abstract and functional language. In fact, if $V_p^1$ is a closed subset of the space $\Bspaceat p$, its orthogonal space or annihilator is actually a subspace of the predual space $\preBspaceat p$, so that $(V_p^1)^\perp \subset \preBspaceat p$. For this reason we have slightly modified the classical definition of sub-manifold in order to accommodate this special structure of interest.
\subsection{Exponential family and mixture (-closed) family submodels}

Our basic example of sub-model is an exponential family in the maximal exponential family $\maxexp$.

\begin{definition}[Exponential family $\EF{V_p}$] \label{def:EF}
Let $V_p$ be a closed subspace of $\Bspaceat p$ and define
\begin{equation*}
\EF{V_p} = \setof{q \in \maxexpat p}{s_p(q) \in V_p} \ .  
\end{equation*}
That is, each $q \in \EF{V_p}$ is of the form $q = \euler^{u - K_p(u)} \cdot p$ with $u \in V_p \cap \sdomainat p$.
\end{definition}

Recall the exponential transport $\etransport p q \colon \Bspaceat p \to \Bspaceat q$, $p,q \in \maxexp$ is defined by $\etransport p q U = U - \expectat q U$. We define the family of parallel spaces $V_q = \etransport p q V_p$, $q \in \maxexp$. The exponential families of two parallel spaces, $\EF{V_p}$ and $\EF{V_q}$, are either equal or disjoint. If fact, if $q \in \EF{V_p}$ then $q = \expof{\bar U - K_p(\bar U)} \cdot p$ and for each $U \in V_p$ it holds 
\begin{multline*}
  \expof{U - K_q(U)} \cdot p = \expof{U - K_p(U) - \bar U + K_p(\bar U)} \cdot q = \\ \expof{\etransport p q {(U - \bar U)} - \expectat q {U - \bar U} + K_p(U) + K_p(\bar U)} \cdot q = \\ \expof{V - K_q(V)} \cdot q
\end{multline*}
with $V = \etransport p q {\left(U - \bar U\right)} \in V_q$. If $q \notin \EF{V_p}$ then there is no common part otherwise the previous computation would show equality. 

The exponential families based on the transport of a subspace $V_p$ form a partition in a covering of statistical models. The next notion of mixture family provides a way to choose a representative in each class.

The mixture family and the complementary spaces are defined as follows.

\begin{definition}[Mixture-closed family]
\label{def:mixturefamily}
\begin{enumerate}
\item 
For each closed subspace $V_p \subset \Bspaceat p$ define its orthogonal space to be its annihilator $V_p^\perp \subset \preBspaceat p$, that is $V_p^\perp = \setof{v \in \preBspaceat p}{\scalarat p vu = 0, u \in V_p}$.
\item The \emph{mixture-closed family} (or mixture family shortly) of $V_p$, is the set of densities $\MF{V_p} \subset \maxexp$ with zero expectation on $V_p$,
  \begin{equation*}
     \MF{V_p} = \setof{q\in\maxexp}{\expectat q U = 0, U \in V_p} \ .
  \end{equation*}
 Equivalently, the set of its mixture coordinates centered at $p$ belongs to $V^\perp_p$, 
 \begin{equation*}
   \eta_p\left(\MF{V_p}\right) = \setof{\frac qp - 1}{q \in \mathcal M(V_p)} = V_p^\perp \cap \eta_p\left(\maxexp\right).
 \end{equation*}
\end{enumerate}
\end{definition}

\begin{remark}
The mixture family $\MF{V_p}$ is convex and deserves its name because itis closed under mixtures, that is convex combinations. However, this name could be misleading as this set in not closed topologically, since we assumed it to be a subset of the maximal exponential family $\maxexpat p$. In general, our mixture families will not contain any extremal point nor will they be generated by a mixture of extremal points. Hence ``closed'' is to be understood in the convex combination sense and not topologically. We will come back to this distinction in the finite dimensional case below. The general problem of mixtures in a maximal exponential family has been discussed in \cite{santacroce|siri|trivellato:2015}
\end{remark}

As we defined the family of subspaces parallel to $V_p$ to be $V_q = \etransport p q V_p$, $q \in \maxexp$, similarly we have the parallel family of orthogonal spaces $V_q^\perp = \mtransport p q V_p^\perp$, where the mixture transport $\mtransport p q \colon \preBspaceat p \to \preBspaceat q$ is defined by $\mtransport p q V = \frac p q V_p$. In fact, $\scalarat q {\etransport p q U}{V} = \scalarat p U {\mtransport q p V}$. The mixture families $\MF{V_q}$, $q \in \maxexp$, are either equal or disjoint. In fact, if $q \in \MF{V_p}$, then
\begin{multline*}
  \MF{V_q} = \setof{r \in \maxexp}{\expectat r V = 0, V \in V_q} = \setof{r \in \maxexp}{\expectat r {\etransport p q U}=0, U \in V_p} = \\ \setof{r \in \maxexp}{\expectat r U = \expectat q U, U \in V_p} = \MF{V_p} \ .
\end{multline*}

The following proposition clarifies the relative position of $\EF{V_p}$ and $\MF{V_p}$.

\begin{proposition}\label{prop:splitting}
\begin{enumerate}
\item The unique intersection of $\EF{V_p}$ and $\MF{V_p}$ is $p$.
\item The space of scores at $q$ of regular curves in $\EF{V_p}$ is $V_q$. 
\item If a regular curve through  $r$ is contained in $\MF{V_p}$, then its score at $r$ is contained in $V_r^\perp$.
\item Assume $V_p^1$ splits in $\Bspaceat p$ with complementary space $V_p^2$. Then both $\EF{V_p^1}$ and $\EF{V_p^2}$ are sub-manifolds of $\maxexp$ with tangent spaces at $p$ respectively $V_p^2$ and $V_p^2$. 
\item Assume $V_p^1$ splits in $\Bspaceat p$ with complementary space $V_p^2$, $u = \Pi_1(u)+\Pi_2(u)$ and assume the mapping
  \begin{equation*}
    q \mapsto u = s_p(q) \mapsto (\Pi_1(u),(\nabla K_p)^{-1}\circ\Pi_2(u))
  \end{equation*}
is a diffeomorphism around $p$. Then $\MF{V_p}$ is a sub-manifold of $\maxexp$ with tangent spaces at $p$ equal to $V_p^2$. 
\end{enumerate}
\end{proposition}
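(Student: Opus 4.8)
\section*{Proof proposal}

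The plan is to dispatch the five items in order, using as the single computational engine Proposition~\ref{prop:CGF}, which for $q=\euler^{u-K_p(u)}\cdot p$ gives $\expectat q V = DK_p(u)[V] = \scalarat p{\nabla K_p(u)}{V}$, together with the strict convexity of $K_p$ on $\sdomainat p$. For item~1, if $q\in\EF{V_p}\cap\MF{V_p}$ then $q=\euler^{u-K_p(u)}\cdot p$ with $u\in V_p\cap\sdomainat p$, while $\expectat q U = DK_p(u)[U]=0$ for all $U\in V_p$ exhibits $u$ as a critical point of the strictly convex restriction $K_p|_{V_p}$; since $K_p|_{V_p}$ already has vanishing gradient at $0$ (because $\expectat p U = 0$ for centered $U$), strict convexity forces $u=0$ and hence $q=p$. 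For item~2, if $q\in\EF{V_p}$ then $\EF{V_p}=\EF{V_q}$ with $V_q=\etransport p q V_p$, so every regular curve of $\EF{V_p}$ through $q$ is $t\mapsto\euler^{v(t)-K_q(v(t))}\cdot q$ with $v(t)\in V_q$, $v(0)=0$; differentiating $\log$ and using $\expectat q{\dot v(0)}=0$ identifies the score at $q$ as $\dot v(0)\in V_q$, and conversely each $V\in V_q$ is attained by $t\mapsto\euler^{tV-K_q(tV)}\cdot q$, so the score space is exactly $V_q$.

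For item~3, a regular curve through $r$ inside $\MF{V_p}=\MF{V_r}$ satisfies $\expectat{r(t)}V=0$ for all $V\in V_r$; differentiating the defining relation $\derivby t\expectat{r(t)}V=\scalarat{r(t)}{V-\expectat{r(t)}V}{Dr(t)}$ at $t=0$ and using $\expectat r V=0$ yields $\scalarat r V{Dr(0)}=0$ for all $V\in V_r$, i.e.\ $Dr(0)\in V_r^\perp$. For item~4, the splitting $\Bspaceat p=V_p^1\oplus V_p^2$ furnishes continuous projections $\Pi_1,\Pi_2$, and the global chart $q\mapsto(\Pi_1 s_p(q),\Pi_2 s_p(q))$ carries $\EF{V_p^1}$ onto the slice $\Cal V_p^1\times\set{0}$ and $\EF{V_p^2}$ onto $\set{0}\times\Cal V_p^2$; this realises them as sub-manifolds with tangent spaces $V_p^1$ and $V_p^2$ at $p$, and the structure propagates to every other point because the Banach-space isomorphism $\etransport p q$ carries the decomposition of $\Bspaceat p$ to the decomposition $V_q^1\oplus V_q^2$ of $\Bspaceat q$.

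For item~5 I would first record the characterisation $\MF{V_p}=\setof{q}{\nabla K_p(s_p(q))\in V_p^\perp}$, which comes directly from $\expectat q U=\scalarat p{\nabla K_p(u)}{U}$. Since $\nabla K_p$ is a smooth diffeomorphism onto its image by Proposition~\ref{prop:CGF}, the assumed diffeomorphism property of $q\mapsto(\Pi_1(u),(\nabla K_p)^{-1}\circ\Pi_2(u))$ lets me use these mixed coordinates as a sub-manifold chart: in them $\MF{V_p}$ appears as the slice on which the expectation part along $V_p$ is frozen at its value $0$ at $p$, and reading off the complementary directions gives the tangent space $V_p^2$.

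The main obstacle is item~5. In the finite-dimensional theory the mixed parameterisation is automatically a diffeomorphism and the tangent identification is immediate, but here the primal fibre $\Bspaceat p=L^\Phi_0(p)$ and the non-reflexive predual $\preBspaceat p=L^\Psi_0(p)$ genuinely differ: the score produced in item~3 lives in the annihilator $V_p^\perp\subset\preBspaceat p$, whereas the tangent model asserted in item~5 is the complement $V_p^2\subset\Bspaceat p$. Reconciling the two descriptions --- showing that the assumed diffeomorphism matches $V_p^\perp$ with $V_p^2$ and that $\MF{V_p}$ is a genuine split sub-manifold rather than merely a sub-model --- is exactly the delicate point, and is precisely why the diffeomorphism is imposed as a hypothesis rather than derived.
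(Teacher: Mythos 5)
Your proposal is correct and follows essentially the same route as the paper's own proof: items 2--5 are argued identically (curve differentiation for the scores, the splitting chart for item 4, and the assumed mixed-coordinate diffeomorphism together with the identity $\expectat q V = \scalarat p{\nabla K_p(u)}{V}$ for item 5). The only variation is item 1, where you invoke uniqueness of the critical point of the strictly convex restriction $K_p|_{V_p}$, while the paper computes directly that $\KL q p = \expectat q U - K_p(U) = -K_p(U)$, which is simultaneously $\ge 0$ and $\le 0$, forcing $K_p(U)=0$ and hence $U=0$; both arguments are valid and rest on the same convexity facts.
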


\begin{proof}
\begin{enumerate}
\item First, $p = \euler^{0-K_p(0)} \cdot p$ and $\expectat p U = 0$, if $U \in V_p$. Second, assume $q \in \EF{V_p} \cap \MF{V_p}$. It follows that $q = \euler^{U - K_p(U)} \cdot p$ and $\expectat q U = 0$ for a $U \in V_p$. Hence $0 \ge \KL q p = \expectat q {U - K_p(U)} = \expectat q U - K_p(U) = -K_p(u) \le 0$, hence $U = 0$ and $q=p$. 
\item Follows easily from the definition of exponential family.
\item For $r(t) = \euler^{u(t)-K_r(u(t))} \cdot r \in \MF{V_r}$ and $u \in V_r^1$ we have 
  \begin{equation*}
  0 =   \left. \derivby t {\expectat {r(t)} u} \right|_{t=0}  =   \left. \covat {r(t)} u {\dot u(t)} \right|_{t=0}  = \scalarat r u {\dot u(0)} \ .
  \end{equation*}
\item Let $\Pi_i$, $i=1,2,\ldots$ be the projections induced by the splitting and let $S$ be the open convex set such that both $u_1, u_2 \in \sdomainat p$, namely $S = \Pi_1^{-1}(\sdomainat p) \cap \Pi_2^{-1}(\sdomainat p)$. The mapping $q \mapsto (u_1,u_2)$ satisfies Definition \ref{def:submani}\eqref{item:submani2}.
\item As the main assumption in Definition \ref{def:submani}\eqref{item:submani2} is now an assumption, we have only to check the image of $U \mapsto (0,(\nabla K_p)^{-1}(U_2))$. In fact, $q = \euler_p{(\nabla K_p)^{-1}(U_2)}$ satisfies
  \begin{multline*}
    \expectat q {V} = dK_p\circ (\nabla K_p)^{-1}(U_2) [V] = \\ \scalarat p {(\nabla K_p)\circ (\nabla K_p)^{-1}(U_2)}{V} = \scalarat p {U_2}{V} = 0, \quad V \in V_p^1 \ . 
  \end{multline*}
\end{enumerate}
\end{proof}

\begin{figure}[t]
  \centering
  \includegraphics[scale=1]{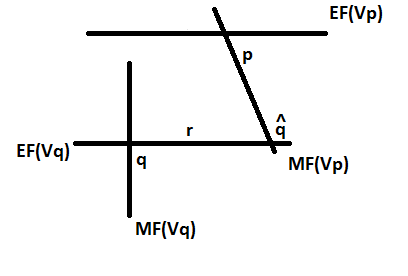}
  \caption{Mixed charts}
  \label{fig:mixedcharts}
\end{figure}
In conclusion, each $p \in \maxexp$ is at the intersection of an exponential and a mixture family and such families can be sub-models or sub-manifolds under proper conditions. This provides a special type of coordinate system namely a mixed system, partly exponential and partly mixture, see Fig. \ref{fig:mixedcharts}. The following proposition summarizes basic facts from the literature and relates the splitting we are looking for with the classical characterization of exponential families, cf e.g. I. Csiszar's paper \cite{csiszar:1975} and the monograph \cite{brown:86}. Special cases of interest will be discussed in the following sections.

\begin{proposition}\label{prop:mixed}
Let be given $p \in \maxexp$ and $V_p \hookrightarrow \Bspaceat p$, so that the families $\EF{V_p}$ and $\MF{V_p}$ are defined. 
\begin{enumerate}
\item Assume that $q \in \maxexp$ is such that the intersection of $\EF{V_q}$ and $\MF{V_p}$ is non empty and contains $\hat q$. The triple of densities $q, \hat q, r$, $r \in \MF{V_p}$ satisfies the \emph{Pythagorean identity} 
    \begin{equation*}
      \KL r {\hat q} + \KL {\hat q} q = \KL r q
    \end{equation*}
and the equivalent equation
\begin{equation*}
  \expectat r {\log\frac{\hat q}q} = \expectat {\hat q} {\log\frac{\hat q}q}
\end{equation*}
\item It follows that any such intersection $\hat q$ strictly minimizes the divergence of $\MF{V_p}$ with respect to $q$, namely
  \begin{equation*}
    \KL {\hat q} q \le \KL r q, \quad r \in \MF{V_p} \ ,
  \end{equation*}
with equality only if $r=\hat q$.
\item Then such intersection $\hat q$ is unique and moreover $\EF{V_q}=\EF{V_{\hat q}}$ and $\MF{V_p} = \MF{V_{\hat q}}$.
\item 
Assume there is an open neighborhood $\mathcal W_p$ of $p\in\maxexp$ such that for each $q \in \mathcal W_p$ there exist the intersection $\hat q = $ of $\EF{V_q}$ and $\MF{V_p}$. We can uniquely write $q = \euler^{\hat u - K_{\hat q}(\hat u)} \cdot \hat q$ with $\hat u \in V_{\hat q}^1$ and $\hat q \in \MF{V_p}$, The map
  \begin{equation*}
   \mathcal W_p \ni q \mapsto (\hat u - \expectat p {\hat u}, \frac {\hat q}p - 1) \in V_p \times V_p^\perp
  \end{equation*}
is injective and separates $\EF{V_p}$ and $\MF{V_p}$.
\end{enumerate}
\end{proposition}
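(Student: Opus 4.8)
The plan is to prove the four claims in sequence, with the Pythagorean identity of item (1) doing essentially all the work and items (2)--(4) following as corollaries. For item (1), I would start from the fact that any point of $\EF{V_q}$ can be written $\hat q = \euler^{U - K_q(U)} \cdot q$ with $U \in V_q = \etransport p q V_p$, so that
\begin{equation*}
\log \frac{\hat q}{q} = U - K_q(U), \qquad U = W - \expectat q W \text{ for some } W \in V_p .
\end{equation*}
The decisive remark is that every density $s \in \MF{V_p}$ assigns $U$ the \emph{same} expectation: since $\expectat s W = 0$ by the very definition of $\MF{V_p}$, one has $\expectat s U = -\expectat q W$, a constant not depending on $s$. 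Applying this to $s = r$ and to $s = \hat q$, which both lie in $\MF{V_p}$, gives at once $\expectat r {\log(\hat q/q)} = \expectat {\hat q} {\log(\hat q/q)}$, the second (equivalent) form of the identity. The Pythagorean identity then drops out of the elementary decomposition $\KL r q = \KL r {\hat q} + \expectat r {\log(\hat q/q)}$ upon replacing $\expectat r {\log(\hat q/q)}$ by $\expectat {\hat q} {\log(\hat q/q)} = \KL {\hat q} q$.

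Items (2) and (3) are then immediate. Item (2) combines item (1) with the non-negativity of the divergence, $\KL r {\hat q} \ge 0$ with equality if and only if $r = \hat q$, yielding $\KL {\hat q} q \le \KL r q$ with the asserted strictness. Uniqueness in item (3) follows by applying this strict minimization property to two candidate intersection points and comparing the two inequalities. For the equalities $\EF{V_q} = \EF{V_{\hat q}}$ and $\MF{V_p} = \MF{V_{\hat q}}$ I would invoke the dichotomy, already established earlier in the text, that the exponential families (respectively mixture families) of a parallel family of subspaces are either equal or disjoint: since $\hat q$ lies in both $\EF{V_q}$ and $\EF{V_{\hat q}}$, and in both $\MF{V_p}$ and $\MF{V_{\hat q}}$, each pair must coincide.

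For item (4), I would first verify that the map is well defined, namely that $\hat u - \expectat p {\hat u} = \etransport {\hat q} p \hat u \in V_p$ (a short transport computation using that elements of $V_p$ are $p$-centered) and that $\frac{\hat q}p - 1 = \eta_p(\hat q) \in V_p^\perp$, which is exactly the characterization of $\MF{V_p}$ in Definition \ref{def:mixturefamily}. Injectivity then follows by inverting the two coordinates in turn: the second coordinate recovers $\hat q$ through the chart $\eta_p$, and once $\hat q$ is known the first coordinate recovers $\hat u$ through the affine isomorphism $\etransport {\hat q} p$, so that $q = \euler^{\hat u - K_{\hat q}(\hat u)} \cdot \hat q$ is fully determined. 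That the map separates $\EF{V_p}$ and $\MF{V_p}$ I would check by computing the two images explicitly: if $q \in \EF{V_p}$ then $\EF{V_q} = \EF{V_p}$, so the intersection point is $\hat q = p$ (by Proposition \ref{prop:splitting}(1)) and $q$ is sent into $V_p \times \set 0$; if $q \in \MF{V_p}$ then $q$ itself lies in $\EF{V_q} \cap \MF{V_p}$, so $\hat q = q$ and $q$ is sent into $\set 0 \times V_p^\perp$.

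I expect item (1) to be the only genuine obstacle, and even there the difficulty is conceptual rather than computational: the key is to recognize that $\log(\hat q/q)$ is, up to the additive constant $K_q(U)$, a member of the transported subspace $V_q$, on which all densities of the mixture family $\MF{V_p}$ share a common expectation. Once this is seen the Pythagorean relation is forced and the remaining items are bookkeeping. The only side condition to monitor is the finiteness of the divergences appearing, which is automatic because $p$, $q$, $\hat q$, $r$ all belong to the single maximal exponential family $\maxexp$ and are therefore pairwise connected by open exponential arcs (Theorem \ref{prop:maxexp-pormanteau}).
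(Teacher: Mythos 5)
Your proposal is correct and follows essentially the same route as the paper: item (1) is the identical computation in the chart centered at $q$, resting on the key observation that a transported element of $V_p$ has constant expectation over all of $\MF{V_p}$ (so that $\expectat r {\hat u} = \expectat p {\hat u} = \expectat {\hat q}{\hat u}$), and items (2)--(4) then follow from nonnegativity of the divergence, the equal-or-disjoint dichotomy, and the chart structure exactly as in the text. The only cosmetic difference is in item (4), where the paper exhibits and proves unique the additive splitting $s_p(q) = (\hat u - \expectat p {\hat u}) + v$ while you invert the two coordinates separately; both arguments reduce to Proposition \ref{prop:splitting}(1), and your explicit check of the separation property is a welcome addition since the paper leaves it implicit.
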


\begin{proof}
  \begin{enumerate}
  \item Let us write $\hat q \in \EF{V_q} \cap \MF{V_p}$ and $r \in \MF{V_p}$ in the chart centered at $q$ as $\hat q = \euler^{\hat u - K_q(\hat u)} \cdot q$ and $r = \euler^{v - K_q(v)} \cdot q$. 
    \begin{multline*}
      \KL r q - \KL r {\hat q} - \KL {\hat q} q = \\ \expectat r {v - K_q(v)} - \expectat r {v - K_q(v) - \hat u + K_q(\hat u)} - \expectat {\hat q} {\hat u - K_q(\hat u)} = \\ \expectat r {\hat u} - \expectat {\hat q} {\hat u} =  \expectat r {\hat u - \expectat p {\hat u}} - \expectat {\hat q} {\hat u - \expectat p {\hat u}} = 0 \ ,
    \end{multline*}
because $\hat u - \expectat p {\hat u} \in V_p$ and both $\hat q, r \in \MF{V_p}$. 
  \item Follows from the Pythagorean Identity and properties of the divergence.
  \item Follows from the previous inequality and the definition of the families. 
  \item Let us write 
    \begin{equation*}
      \log \frac qp = \log \frac q{\hat q} + \log \frac {\hat q}p
    \end{equation*}
with: $q = \euler^{u - K_p(u)} \cdot p$, $u \in \Bspaceat p$; $\hat q = \euler^{v - K_p(v)} \cdot p$, $v \in \Bspaceat p$ and $\expectat {\hat q} v = 0$ if $v \in V_p$; $q = \euler^{\hat u - K_{\hat q}(\hat u)} \cdot \hat q$, $\hat u \in V_{\hat q}$. It follows
\begin{equation*}
  u - K_p(u) = \hat u - K_{\hat q}(\hat u) + v - K_p(v).
\end{equation*}
The $p$-expectation on both sides gives 
\begin{equation*}
  - K_p(u) = \expectat p {\hat u} - K_{\hat q}(\hat u) - K_p(v),
\end{equation*}
so that the equality becomes
\begin{equation*}
  u = \hat u - \expectat p {\hat u} + v.
\end{equation*}
This splitting is unique, because $0 = \hat u - \expectat p {\hat u} + v$ implies $v \in V_p$, hence $
\hat q = \euler^{v - K_{p}(v)} \cdot p \in \EF{V_p} \cap \MF{V_p}$, so that $\hat u = 0$ and $v=0$.

  \end{enumerate}
\end{proof}

\section{Finite dimensional families}\label{sec:fdsub}

The most important practical applications of dimensionality reduction for infinite dimensional problems aim at transforming an infinite dimensional problem into a finite dimensional one. This is because, in order to be able to implement a numerical method in a machine, one needs a finite dimensional approximation. It is therefore particularly important to study finite dimensional submanifolds of the statistical manifold on which we might wish to approximate the full, infinite dimensional solution of a problem.

\subsection{Finite dimensional exponential family EF$(c)$}
\label{sec:finiteexp}
Our first special case is the parametric exponential family associated to a finite family of random variable $c = (c_1,\dots,c_n)$. 
\begin{eqnarray}\label{def:finitedimexp}
  \EF c &=& \set{p(\cdot,\theta), \theta \in \Theta},\\ \nonumber
  p(\cdot,\theta) &=& p_\theta = \expof{\theta^T c(\cdot) - \psi(\theta)},
\end{eqnarray}
where $\Theta$ is a maximal convex open set in $\reals^n$.

From the definition it is clear that all densities in the exponential family are connected by an open exponential arc. It follows that the exponential family is a subset of the maximal exponential family containing any of its elements, say $\maxexp = \maxexpat {p}$, for some $p \in \EF c$. In fact, it is a special case of Definition \ref{def:EF}. Precisely, the expression of each $p_\theta \in \EF c$ in the chart $s_{p}$ is given by 
\begin{align*}
  p(\cdot,\theta) &= \expof{\theta^T c(\cdot) - \psi(\theta)} \\
 &= \expof{(\theta-\theta_0)^T c(\cdot) - (\psi(\theta)-\psi(\theta_0))}  \cdot p\\
 &= \expof{(\theta-\theta_0)^T (c(\cdot) - \expectat {p_0} {c}) - (\psi(\theta)-\psi(\theta_0) - (\theta-\theta_0)^T \expectat {p_0} {c})}  \cdot p \\ &= \expof{U(\theta) - K_{p_0}(U(\theta))} \cdot p
\end{align*}
with
\begin{align*}
  U(\theta) &= (\theta-\theta_0)^T (c(\cdot) - \expectat {p_0} {c}) \in \Bspaceat {p} \\
 K_{p_0}(U(\theta)) &= \psi(\theta)-\psi(\theta_0) - (\theta-\theta_0)^T \expectat {p} {c}
\end{align*}

For each $\theta \in \Theta$ let us define the subspace $V_\theta^1$ of $\Bspaceat {p_{\theta}}$ given by
\begin{align}
  V_{\theta}^1 &= V_{p_\theta}^1 = \spanvar{c_1 - \expectat {p_{\theta}} {c_1},\dots,c_n - \expectat {p_{\theta}} {c_n}} \nonumber  
  \\  &= \spanof{c_j - \partiald {\theta_j} \psi (\theta)}{j = 1,\dots,n} \label{eq:tangspexp}
\end{align}
and let $\Pi_\theta \colon \Bspaceat {p_\theta} \to V_\theta^1$ be the orthogonal projector. The orthogonal projection is well defined because $\Bspaceat {p_\theta} \hookrightarrow L^2_0(p_\theta)$ and $V_p^1$ is a closed subspace of $L^2_0(p_\theta)$. If $g(\theta) = [\covat{p_\theta}{c_i}{c_j}]_{i,j=1}^n = \hessianof \psi(\theta)$ is the Fisher Information matrix of the exponential family and $[g^{ij}]_{i,j=1}^n = g^{-1}(\theta)$ denotes its inverse, then for all $U \in B_{p_\theta}$.
\begin{equation}\label{eq:projexpu}
  \Pi_\theta U = \sum_{j=1}^n \sum_{i=1}^n g^{ij}(\theta) \covat {p_\theta} U {c_i}(c_j - \expectat {p_\theta} {c_j}).
\end{equation}

The mapping
\begin{equation*}
  \Bspaceat {p_\theta} \ni U \mapsto (\Pi_\theta U , (I - \Pi_\theta)U) \in V_\theta^1 \times V_\theta^2 \ ,
\end{equation*}
with
\begin{equation*}
  V_\theta^2 = (I - \Pi_\theta)\Bspaceat {p_\theta} = \setof{V \in \Bspaceat {p_\theta}}{\scalarat {p_\theta} V U = 0, U \in V_\theta^1} \hookrightarrow (V_{p_\theta}^1)^\perp, 
\end{equation*}
is a splitting because the decomposition is unique and the spaces are both closed.

Here Definition \ref{def:submani}\eqref{item:submani2} applies and splitting chart at $p_\theta$ is defined on the open domain where the projection is feasible, namely $\setof{p = \euler_{p_\theta}(U) \in \maxexp}{\Pi_\theta U \in \sdomainat {p_\theta}}$, by 
  \begin{multline*}
    p \mapsto U = s_{p_\theta}(p) \mapsto (U^1 = \Pi_{\theta} U , U^2 = U - \Pi_\theta U) \mapsto (\euler_{p_\theta}(U^1),U^2) \\ \in \operatorname{EF}(c) \times (\sdomainat {p_\theta} \cap \kerof {\Pi_\theta})
  \end{multline*}

Note that this splitting chart does provide an immersion of the exponential family into the maximal exponential family, together with a complementary model given by the infinite dimensional exponential family $\maxexp_{\kerof {\Pi_{\theta}}}(p_{\theta}) = \setof{\euler_{p_\theta}(U^2)}{\Pi_\theta(U^2)=0}$, but it does not provide directly a complementary submanifold in the form of a mixture model. However, a different approach is usually taken to describe the complementary manifold, namely Propositions \ref{prop:splitting} and \ref{prop:mixed}.

Let us fix $p_0 = p_{\theta_0} \in \EF{c}$ with associated vector space of centered statistics $V_0^1 \subset \Bspaceat {p_0}$. Consider the vector space $V_0^2 = \setof{U^2 \in \preBspaceat {p_0}}{\scalarat{p_{\theta_0}}{U^2}{U^1} = 0, U^1 \in V_0^1}$, and observe that the mapping $\eta_{p_0} \colon \sdomainat {p_0} \ni U \mapsto d K_{p_0}(U) \in \Bspaceat {p_0}^*$, defined by $\scalarat {p_0} {V}{\eta_{p_0}(U)} = dK_{p_0}(U) [V]$, $V \in \Bspaceat {p_0}$, is one-to-one because of the strict convexity of the cumulant generating functional $U \mapsto K_{p_0}(U)$.  

Assume now $U \in \sdomainat {p_0}$ and moreover $\eta_{p_0}(U) \in V_0^2$. It follows that the corresponding density $\euler_{p_0}(U) \in \maxexp$ is such that
\begin{equation*}
  \expectat {\euler_{p_0}(U)} {U^1} = dK_{p_0}(U)[U^1] = \scalarat {p_0} {\eta_{p_0}(U)}{U^1} = 0, \quad U^1 \in V_0^1 \ .
\end{equation*}


Let $\operatorname{EF}(c)=\operatorname{EF}(c_1,\dots,c_n)$ be an exponential family in the maximal exponential family $\maxexp$, and let $V^1 = \spanvar{c_1,\dots,c_n}$. Let us define the linear family
  \begin{equation*}
    \mathcal L(c;\alpha) = \setof{q \in \maxexp}{\expectat q c = \alpha} \ ,  
  \end{equation*}
  where the expected value is meant to be applied componentwise. 
  \begin{proposition}
  \begin{enumerate}
  \item Given $q \in \maxexp$, compute the expected value of the $c$'s statistics, $\expectat q c = \alpha$, so that $q$ belongs to the linear family $\mathcal L(c;\alpha)$. Assume there is a nonempty intersection $p \in \EF c \cap \mathcal L(c;\alpha)$, namely $p \in \EF c$ such that $\expectat p c = \expectat q c$. Then such a $p$ is unique. 
  \item Let us express $q$ in the chart centered at $p$, $q = \euler_p(U^2)$. Then $\eta_p(U^2)$ is orthogonal to $V_p^1$.
  \item $p$ is the {\emph{information-projection}} of any element $\bar p$ of the exponential family $\EF c$ on $\mathcal L (c;\expectat q c)$, that is
    \begin{equation*}
      \KL p {\bar p} \le \KL r {\bar p}, \quad r \in \mathcal L (c;\expectat q c), \bar p \in \EF c \ ,
    \end{equation*}
and the Pytagorean equality holds
\begin{equation*}
  \KL q p + \KL p {\bar p} = \KL q {\bar p}
\end{equation*}
\item \label{item:reverseI} $p$ is the {\emph{reverse information-projection}} of $q$ on the exponential family $\EF c$, that is
  \begin{equation*}
    \KL q p \le \KL q {\bar p}, \quad \bar p \in \EF c, p \in \EF c \cap \mathcal L (c,\expectat q c) \ .
  \end{equation*}
  \end{enumerate}
\end{proposition}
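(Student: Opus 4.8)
The plan is to establish the four items in order, leaning throughout on two facts: members of $\EF c$ have log-density affine in the statistics $c$, and the cumulant generating functional is strictly convex. Note first that $q$, $p$, $\bar p$, and every $r \in \mathcal L(c;\alpha)$ lie in the single maximal exponential family $\maxexp$ containing $\EF c$, so by the Portmanteau Theorem (Theorem~\ref{prop:maxexp-pormanteau}) they are pairwise connected by open exponential arcs and the relevant log-ratios lie in $L^\Phi$ of each involved density; this will make every divergence below finite and every splitting of a logarithm under an expectation legitimate. For item~1, recall that in the chart $s_{p_0}$ each $p_\theta \in \EF c$ is $\euler_{p_0}(U(\theta))$ with $\expectat {p_\theta} c = \nabla\psi(\theta)$, and that the Hessian of $\psi$ is the Fisher matrix $g(\theta) = [\covat{p_\theta}{c_i}{c_j}]$, positive definite for a minimal family (affinely independent $c_j$). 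Hence $\psi$ is strictly convex and the mean map $\theta \mapsto \expectat{p_\theta} c$ is injective; equivalently $\eta_{p_0} = \nabla K_{p_0}$ is one-to-one, as already noted in the excerpt. So at most one $p \in \EF c$ can satisfy $\expectat p c = \expectat q c$, and the assumed intersection is unique.

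For item~2 I would write $q = \euler_p(U^2)$ and use \eqref{eq:DK}: the mixture coordinate $\eta_p(U^2) = \tfrac qp - 1 \in \preBspaceat p$ satisfies $\scalarat p{\eta_p(U^2)}{U^1} = \expectat q {U^1}$ for every $U^1 \in V_p^1$. Since $U^1$ is a linear combination of the centred statistics $c_j - \expectat p{c_j}$ and $q$ shares the $c$-means of $p$ (both lie in $\mathcal L(c;\alpha)$), this expectation vanishes, giving $\eta_p(U^2) \perp V_p^1$.

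The engine for items~3 and~4 is a single observation: for $p,\bar p \in \EF c$ the log-ratio $\log\tfrac p{\bar p} = (\theta_p-\theta_{\bar p})^\top c - (\psi(\theta_p)-\psi(\theta_{\bar p}))$ is affine in $c$, so its expectation depends on the measure only through the $c$-means. Thus for any $r \in \mathcal L(c;\alpha)$ one has $\expectat r{\log\tfrac p{\bar p}} = \expectat p{\log\tfrac p{\bar p}} = \KL p{\bar p}$, because $p$ itself lies in $\mathcal L(c;\alpha)$. For item~3 I split $\log\tfrac r{\bar p} = \log\tfrac rp + \log\tfrac p{\bar p}$ and apply $\expectat r \cdot$ to get the Pythagorean identity $\KL r{\bar p} = \KL rp + \KL p{\bar p}$; taking $r=q$ yields the stated equality, and since $\KL rp \ge 0$ with equality iff $r=p$, we obtain $\KL p{\bar p} \le \KL r{\bar p}$, i.e.\ $p$ is the information-projection of $\bar p$ on $\mathcal L(c;\alpha)$. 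For item~4 I write $\KL q{\bar p} - \KL qp = \expectat q{\log\tfrac p{\bar p}}$, and the same affine/mean argument (now with $r=q$) turns the right-hand side into $\expectat p{\log\tfrac p{\bar p}} = \KL p{\bar p}\ge 0$, which is exactly the reverse inequality $\KL qp \le \KL q{\bar p}$.

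The computations are routine once the intersection $p$ is granted, and this is precisely why existence is taken as a hypothesis rather than derived: existence of an $I$-projection onto a non-compact, infinite-dimensional linear family is the genuinely delicate point, whereas uniqueness (item~1) and the projection identities (items~3--4) are essentially algebraic consequences of the affine structure of $\EF c$. The only care required is the integrability already secured by Theorem~\ref{prop:maxexp-pormanteau}, which validates the log-splittings and the exchanges of expectation used above; I expect this bookkeeping, rather than any single hard estimate, to be the main thing to watch.
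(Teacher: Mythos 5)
Your proof is correct and follows essentially the same route as the paper: uniqueness via strict convexity of $\psi$ and injectivity of the mean map, orthogonality via Equation \eqref{eq:DK} and equality of the $c$-means, and the Pythagorean/projection identities via the fact that the expectation of the affine-in-$c$ log-ratio $\log(p/\bar p)$ is the same under any member of $\mathcal L(c;\alpha)$ — which is exactly the cancellation the paper performs in the chart centered at $p$.
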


\begin{proof}
    \begin{enumerate}
    \item Follows from the strict convexity of the cumulant generating function $\theta \mapsto \psi(\theta)$ and $\expectat {p_\theta} {c_j} = \partial_j \psi(\theta)$, $j=1,\dots,n$ and $\theta \in \Theta$. If $\partial_j\psi(\theta_1) = \partial_j\psi(\theta_2)$, $j = 1,\dots,n$, then $\sum_{j=1}^n (\partial_j\psi(\theta_1) - \partial_j(\theta_2))(\theta_{1j}-\theta_{2j}) = 0$, which implies $\theta_1=\theta_2$ because of $\nabla \psi$ strict monotonicity.
    \item The defining equality is equivalent to $\expectat q {c_j - \expectat p {c_j}} = 0$, $j=1,\dots,n$, hence $\expectat q V = 0$ if $V \in V_p^1$. It follows $0 = d K_p(U^2) [V] = \scalarat p {U^2} V$.
      \item Let us express $r$ and $\bar p$ in the chart centered at $p$, namely $r = \euler_p(U^2)$ and $\bar p = \euler_p(U^1)$, so that $\expectat r {U^1} = \expectat p {U^1} = 0$. It follows that
        \begin{align*}
          \KL r {\bar p} &- \KL p {\bar p} \\ &= \expectat r {U^2 - U^1 - K_p(U^2) + K_p(U^1)} - \expectat p {- U^1 + K_p(U^1)} \\
&= \expectat r {U^2} - K_p(U^2) \\
&= \KL r p 
        \end{align*}
The Pythagorean equality is proved by expressing each density in the chart centered at $p$.
\item By expressing $\bar p$ in the chart centered at $p$, namely $\bar p = \euler_p(U^1)$, $U^1 \in V_p^1$, we have
  \begin{align*}
    \KL q {\bar p} - \KL q p &= \expectat q {\log\frac q{\bar p}} - \expectat q {\log\frac qp} \\
&= \expectat q {\log\frac p {\bar p}} \\
&= - \expectat q {U^1} + K_p(U^1) = K_p(U^1) 
  \end{align*}
which is minimized at $U^1 = 0$
    \end{enumerate}
  \end{proof}

\begin{remark} \begin{enumerate}
  \item 
For each $q \in \maxexp$ such that there exists $p \in \EF c$ satisfying the previous proposition, there is a splitting parameterization $q \mapsto (p,\euler_p(q)) \in \EF c \times V_p^\perp$. The critical issue is the closure of $V_p^\perp$ into $\Bspaceat p$.
\item
Item \ref{item:reverseI} suggests to characterize the feasible set for the splitting by considering the minimum of the mapping 
\begin{equation*}
  q \mapsto \argmin \setof{\KL q {\bar p}}{\bar p \in \EF c} \ .
\end{equation*}
Let us assume (without restriction) that the entropy $H(q) = - \expectat q {\log q}$ is finite, so that $\KL q {\bar p} = -H(q) + \expectat q {\log \bar p} = -H(q) + \sum_{j=1}^n \theta_j \expectat q {c_j} - \psi(\theta)$. we have
\begin{equation*}
 \inf \KL q {\bar p} = -H(q) + \max \theta'\expectat q c - \psi(\theta) = -H(q) + \psi_*(\expectat q c)
\end{equation*}
It follows that the feasible set for the splitting is the open set
\begin{equation*}
\setof{q \in \maxexp}{ \expectat q c \in \domof{\psi_*}^\circ}
\end{equation*}
\end{enumerate}
\end{remark}

\subsection{Finite dimensional mixture(-generated) family $\conof{ q}$}

The basic splitting we have used in the previous sections consists of a closed subspace $V_p^1 \subset \Bspaceat p$ together with its pre-dual annihilator $V_p^2 \subset \preBspaceat p$. As the model space $\Bspaceat p$ is not an Hilbert space unless the base space is finite, there is no identification of $V_p^1 \times V_p^2$ within $\Bspaceat p$, but we only have the immersion $\Bspaceat p \hookrightarrow V_p^1 \oplus V_p^2$. However, the technicalities are somehow easier to control if one of the two splitting spaces is finite dimensional, as it was the case for $V_p^1$ in the previous section.

We have defined a mixture-closed (by convex combinations) family $\MF{V_p}$ in Definition \ref{def:mixturefamily}. Here, we first define a family as the mixture generated by a given family through convex combinations and later we show how this is related with the mixture-closed family. Suppose we are given $n+1$ fixed probability densities, say $q = [q_1,q_2,\ldots,q_{n+1}]^T$. Consider the convex hull of $q$, generated by all possible convex combinations of $q$ elements, which we term ``mixture generated family'' (MG)
\begin{equation*}
  \conof {q} = \setof{\theta^T  q}{\theta \in \Delta(n)} \ ,
\end{equation*}
were $\Delta(n) = \setof{\theta \in \reals_+^{n+1}}{\sum_{i=1}^{n+1} \theta_i = 1}$ is the standard simplex.

We now state a proposition giving conditions under which the two different notions of mixture family coincide in the finite dimensional case, namely we give conditions under which MF$ = $MG.

\begin{proposition}\ 
\begin{enumerate}
\item 
  If all $q_i$ belong to the same maximal exponential family $\maxexpat p$, then $\conof{ q} \subset \maxexpat p$. In particular, we can choose $p \in \conof{ q}$.
\item In such a case, let $V_p^1 = \setof{U \in \Bspaceat p}{\expectat {q_j} U = 0, j=1,\dots,n+1}$. Then this space is closed in $\Bspaceat p$ and $\conof{q} \subset \MF{V_p^1}$.
\item If moreover $\hat q = \sum_{i=1}^{n=1} \alpha_i q_i$ with $\sum_{i=1}^{n=1} \alpha_i = 1$ is a positive density only if $\alpha_i \ge 0$, $i = 1,\dots,n+1$, then $\conof { q} = \MF{V_p^1}$.
\end{enumerate}
\end{proposition}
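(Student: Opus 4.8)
The plan is to prove the three items in sequence, with each relying on the structural results already established, particularly the Portmanteau Theorem \ref{prop:maxexp-pormanteau} and the definition of the mixture-closed family $\MF{V_p^1}$.

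For item (1), the key point is that all $q_i$ lie in the same maximal exponential family $\maxexpat p$, which by Theorem \ref{prop:maxexp-pormanteau} means they are pairwise connected by open exponential arcs, equivalently $\maxexpat {q_i} = \maxexpat p$ for each $i$. I would show $\conof q \subset \maxexpat p$ by verifying that an arbitrary convex combination $\bar q = \sum_i \theta_i q_i$ is connected to $p$ by an open exponential arc, using characterization \eqref{prop:maxexp-pormanteau-6}: one needs $\frac{\bar q}{p} \in L^{1+\varepsilon}(p)$ and $\frac{p}{\bar q} \in L^{1+\varepsilon}(\bar q)$ for some $\varepsilon > 0$. Since $\frac{\bar q}{p} = \sum_i \theta_i \frac{q_i}{p}$ is a convex combination and each $\frac{q_i}{p} \in L^{1+\varepsilon_i}(p)$, the integrability of $\frac{\bar q}{p}$ follows by convexity of the norm (take $\varepsilon = \min_i \varepsilon_i$); the reverse ratio $\frac{p}{\bar q} \le \frac{p}{\theta_i q_i}$ for any $i$ with $\theta_i > 0$ gives the other bound. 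The final remark that we may choose $p \in \conof q$ just amounts to picking $p$ to be one of the $q_i$ or a fixed convex combination.

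For item (2), I would first check that $V_p^1 = \setof{U \in \Bspaceat p}{\expectat{q_j}U = 0, \ j = 1,\dots,n+1}$ is closed. Each condition $U \mapsto \expectat{q_j}U$ is a bounded linear functional on $\Bspaceat p$: since $q_j \in \maxexpat p$, the mixture coordinate $\frac{q_j}{p} - 1 \in \preBspaceat p$ by Proposition \ref{prop:CGF}(\ref{item:firsttwo}), and $\expectat{q_j}U = \scalarat p {\frac{q_j}{p}-1}{U} + \expectat p U = \scalarat p {\frac{q_j}{p}-1}{U}$ for centered $U$; so $V_p^1$ is the intersection of finitely many kernels of continuous functionals, hence closed. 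For the inclusion $\conof q \subset \MF{V_p^1}$, take any $\bar q = \sum_i \theta_i q_i \in \conof q$ and any $U \in V_p^1$; then $\expectat{\bar q}U = \sum_i \theta_i \expectat{q_i}U = 0$ by definition of $V_p^1$, so $\bar q \in \MF{V_p^1}$ directly from Definition \ref{def:mixturefamily}.

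Item (3) is the genuine content and the main obstacle: proving the reverse inclusion $\MF{V_p^1} \subset \conof q$ under the hypothesis that the only convex-coefficient combinations of the $q_i$ which are positive densities are those with all $\alpha_i \ge 0$. The plan is to characterize $\MF{V_p^1}$ via its mixture coordinates: by Definition \ref{def:mixturefamily}, $\eta_p(\MF{V_p^1}) = (V_p^1)^\perp \cap \eta_p(\maxexp)$, so the tangent directions are constrained to the annihilator $(V_p^1)^\perp \subset \preBspaceat p$. The crucial step is to identify $(V_p^1)^\perp$ as precisely the span of the mixture coordinates $\eta_p(q_i) = \frac{q_i}{p} - 1$; this is a finite-dimensional duality statement, where $V_p^1$ is cut out by the $n+1$ functionals $\frac{q_j}{p}-1$, and one must argue that the annihilator of this intersection of kernels is the linear span of those same elements (a standard finite-codimension duality fact, though it requires care in the non-reflexive Orlicz setting, using that the $\frac{q_j}{p}-1$ are genuinely elements of the predual $\preBspaceat p$). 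Given this, any $r \in \MF{V_p^1}$ satisfies $\frac r p - 1 = \sum_i \beta_i (\frac{q_i}{p}-1)$ for real coefficients $\beta_i$, whence $r = \sum_i \beta_i q_i$ with $\sum_i \beta_i = 1$ (the coefficients sum to one because $\expectat p {\frac r p - 1} = 0$ forces this once we also use that the constant relation $\sum_i \frac{q_i}{p}$ is controlled). Finally, invoking the positivity hypothesis forces $\beta_i \ge 0$, so $r \in \conof q$, completing $\MF{V_p^1} = \conof q$. The hardest point to make rigorous is the annihilator identification in the non-Hilbertian Orlicz duality, together with correctly bookkeeping the affine normalization so that arbitrary real combinations in the mixture chart translate back into genuine convex (simplex) coefficients once positivity is imposed.
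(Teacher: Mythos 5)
Your proposal follows essentially the same route as the paper: item (1) via characterization (6) of the Portmanteau Theorem together with convexity of the power functions, item (2) via continuity of the functionals $U\mapsto\expectat {q_j} U$ induced by $\frac{q_j}{p}-1\in\preBspaceat p$, and item (3) via identifying the annihilator of $V_p^1$ with the span of the mixture coordinates $\frac{q_i}{p}-1$. Your direct verification of $\conof{q}\subset\MF{V_p^1}$ by linearity of the expectation is in fact cleaner than the paper's detour through the double annihilator, and your worry about that duality step in the non-reflexive setting is harmless: $V_p^2=\operatorname{span}\{\frac{q_1}{p}-1,\dots,\frac{q_{n+1}}{p}-1\}$ is a finite-dimensional, hence closed, subspace of the predual $\preBspaceat p$, and the pre-annihilator of its annihilator in $\Bspaceat p$ is its norm closure, i.e.\ itself; no reflexivity is needed because you only pass predual~$\to$~dual~$\to$~predual.

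The one step that does not work as written is your justification that the coefficients sum to one in item (3). From $\frac rp-1=\sum_i\beta_i\left(\frac{q_i}{p}-1\right)$ you cannot conclude $\sum_i\beta_i=1$ by taking $p$-expectations: both sides have zero $p$-expectation identically, so this yields $0=0$ and constrains nothing; indeed for $r=p$ all $\beta_i$ vanish. What is true is $r=\sum_i\beta_i q_i+\left(1-\sum_i\beta_i\right)p$, an affine combination of $q_1,\dots,q_{n+1}$ \emph{and} $p$. The fix is exactly the freedom you noted in item (1): choose the base point $p\in\conof{q}$, say $p=\sum_j\gamma_j q_j$ with $\gamma\in\Delta(n)$; then $r=\sum_i\bigl(\beta_i+(1-\sum_k\beta_k)\gamma_i\bigr)q_i$ and the new coefficients do sum to one, after which the positivity hypothesis forces them to be nonnegative, giving $r\in\conof{q}$. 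The paper is equally terse on this point, but since you yourself flagged the affine bookkeeping as the delicate step, this is the precise place where it bites.
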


\begin{proof}\begin{enumerate}
\item (Cf. \cite{santacroce|siri|trivellato:2015}) 
We use Portmanteu Theorem \ref{prop:maxexp-pormanteau}.\ref{prop:maxexp-pormanteau-6}. Given $q_1,q_2 \in \maxexpat p$, $q_1 = \euler_p(U_1)$ and $q_2 = \euler_p(U_2)$ consider the convex combination $q_\theta = (1-\theta) q_1 + \theta q_2$, $0 < \theta <1$. From the convexity of $x \mapsto x^{1+\epsilon}$ we derive
\begin{align*}
  \int \left(\frac {q_\theta}p\right)^{1+\epsilon}p &= \int \left(\frac {(1-\theta)q_1+\theta q_2}p\right)^{1+\epsilon}p \\
&\le (1-\theta) \int \left(\frac {q_1}p\right)^{1+\epsilon}p + \theta \int \left(\frac {q_2}p\right)^{1+\epsilon}p \ , 
\end{align*}
where both integrals are finite for some $\epsilon > 0$.

From the convexity of $x \mapsto x^{-\epsilon}$ we derive
\begin{align*}
   \int \left(\frac p{q_\theta}\right)^{1+\epsilon}q_\theta &= \int \left(\frac p{(1-\theta)q_1+\theta q_2}\right)^{1+\epsilon}((1-\theta)q_1+\theta q_2) \\ &= \int p^{1+\epsilon}((1-\theta)q_1+\theta q_2)^{-\epsilon} \\ &\le
(1-\theta) \int p^{1+\epsilon}q_1^{-\epsilon} + \theta \int p^{1+\epsilon} q_2^{-\epsilon}
\\ &= (1-\theta) \int \left(\frac p {q_1}\right)^{1+\epsilon}q_1 + \theta \int \left(\frac p {q_2}\right)^{1+\epsilon}q_2 \ ,
\end{align*}
where both integrals are finite for some $\epsilon > 0$. 
\item Consider the vector space $V_p^2$ generated in $\preBspaceat p$ by $\frac {q_i}p - 1$, $i = 1,\dots,n+1$. As $V_p^1 = (V_p^2)^\perp$, we have $(V_p^1)^\perp = V_p^2$ so that $\MF{V_p} = V_p^2 \cap \maxexp$. A generic $v \in V_p^2$ is a linear combination $v = \sum_{i=1}^{n+1} \alpha_j (\frac {q_j}p - 1)$, and $v=\frac \barq p -1$ for a density $\barq$ if $\sum_{i=1}^{n=1} \alpha_j = 1$. In particular this is true for each $\barq \in \conof {q}$.  
\item If the assumption holds true, all $\alpha_i$'s that produce a density are nonnegative.
\end{enumerate}
\qed
\end{proof}

The exponential transport $\etransport p {\barq} U = U - \expectat {\barq} U$, ${\barq} \in \conof {q}$ acts on $V_p^1$ as $U - \sum_{j=1}^{n+1} \expectat {q_j} U = U$, so that 
\begin{multline*}
\etransport p \barq V_p^1 = \setof{\etransport p \barq U}{U \in \Bspaceat p, \expectat {q_i} U = 0, i=1,\dots,n+1} = \\ \setof{V \in \Bspaceat {\barq}}{\expectat {q_i} {V} = 0, i=1,\dots,n+1} = V_q^1  
\end{multline*}

We define the \emph{exponential family orthogonal to $\conof{q}$} to be $\EF{V_{\barq}^1} = \setof{\euler_{\barq}(U)}{U \in V_{\barq}^1}$ for any $\barq \in \conof{q}$. Note that the same exponential family can be expressed at any $p$, in which case the base space is 
\begin{multline*}
  V_p^1 = \etransport {\barq} p V_{\barq}^1 = \setof{\etransport {\barq} p U}{U \in \Bspaceat {\barq}, \expectat {q_i} U = 0, i=1,\dots,n+1} = \\ \setof{V \in \Bspaceat p}{U \in \expectat {q_i} V = \expectat {\barq} V, i=1,\dots,n+1}.
\end{multline*}

The families $\EF{V_p^1}$, $\MF{V_p^1}$ described above form a couple as discussed in Section~\ref{sec:submodels} above.

\section{Finite dimensional approximations by projection}\label{sec:proj}

We now have all the tools we need to derive finite dimensional approximations of infinite dimensional evolution equations for probability measures, such as the ones we have highlighted in Section \ref{sec:ideq} from probability theory, signal processing, social sciences, physics and quantum theory. This can be done with the rigorous infinite dimensional manifold structure from G. Pistone and co-authors we have summarized in the previous sections. 

As we have mentioned in the introduction, this has been done in the past by D. Brigo and co-authors in \cite{brigobernoulli,brigoieee,armstrongbrigomcss} for the filtering problem and in \cite{brigogyor,brigoarhus} for the Fokker-Planck equation, but using the whole $L^2$ space as superstructure, without specifically investigating the geometric structures at play in the infinite-dimensional environment, except for the enveloping exponential manifold discussion in \cite{brigobernoulli}.

Here we will develop the case of the Fokker-Planck PDE since, as we explained in Section \ref{sec:ideq}, this is really the element that brings about infinite dimensionality even in the more complex cases of signal processing and quantum theory stochastic PDEs. The Fokker-Planck equation is thus the ideal benchmark case where one can study dimensionality reduction at the crossroad of different areas. 

We should also mention briefly that the SPDE case we do not treat here involves infinite-dimensional evolution equations  driven by noise and rough paths. The driving rough paths motivate possibly different types of projections related to stochastic differential geometry and introduce different notions of optimality of the projection of the equation solution. We do not have this problem here, since our Fokker-Planck benchmark case will simply be a PDE and will not be driven by noise, but for the general case see the forthcoming paper by J. Armstrong and co-authors \cite{armstrongbrigoicms} in this same volume. 

Before turning to the Fokker-Planck equation, however, we first consider our running example of Section~\ref{sec:running}.

\subsection{Finite dimensional approximation for the heat equation}

With the notations of Definition~\ref{def:orlich-sobolev}, let $p$ be a density in the $W_{\Phi}^1$-exponential family, $p \in \dmaxexpat M$, that is $p = \euler^{U - K_M(U)} \cdot M$ and $U \in \dsdomainat M = \sdomainat M \cap \Bspaceat M \cap W_{\Phi}^1$.

Let $\mathcal A p$ be the non-linear differential operator $p^{-1} \mathcal L^* p$ where $\mathcal L^*$ is the differential operator for our running example equation of Section~\ref{sec:running}, where we assume bounded and uniformly positive definite matrix of coefficients $[a_{ij}]$. Namely, we are considering the anisotropic heath equation.

\begin{equation*}
  \mathcal A p(x) = p(x)^{-1} \sum_{i,j=1}^d \partiald {x_i} \left(a_{ij}(x) \partiald {x_j} p(x)\right), \quad x \in \reals^d \ .
\end{equation*}

Conditions on the coefficients $[a_{ij}]$ are to be given in order to show that the operator on a sufficiently large domain $\mathcal D$ is a section of the differentiable mixture bundle, namely $\mathcal A(p) \in \predBspaceat p$, $p \in \mathcal D \subset \dmaxexpat M$. We do not want to discuss here such conditions. It was done in \cite{lods|pistone:2015} for the special case of the Laplacian, and we assume this property from now on. Note that the zero expectation condition is trivially verified by
\begin{equation*}
  \expectat p {\mathcal A p(x)} = \int \mathcal L^\ast p(x) \ dx = \int p(x) \mathcal L 1 \ dx = 0.
\end{equation*}

Recall that the differentiable predual bundle has an affine atlas of charts, see Definition \ref{def:diffbundle}\eqref{def:diffbundle2}. The chart centered at $p$ is
\begin{equation*}
 \prescript{*}{}\sigma_p \colon \dmixbundleat M \ni (q,V) \mapsto \left(s_p(q), \mtransport q p V\right) \in \dBspaceat p \times \predBspaceat p.
\end{equation*}
where the exponential chart is $s_p(q) = \log\frac q p - \expectat p {\log\frac q p}$ and the linear transport $\mtransport q p \colon \predBspaceat q \to \predBspaceat p$ is defined by $V \mapsto \frac q p V$. 

\begin{example}
In the chart centered at $M$,
\begin{equation*}
 \prescript{*}{}\sigma_M(\euler^{U - K_M(U)} \cdot M,V) = \left(U, \euler^{U - K_M(U)}V\right) \in \dBspaceat M \times \predBspaceat M.
\end{equation*}
 It follows that the expression of the operator $\mathcal A$ in the charts centered at $M$ is of the form 
 \begin{multline*}
   U \mapsto \widehat {\mathcal A}_M(U) = \euler^{U - K_M(U)} \mathcal A(\euler^{U - K_M(U)} \cdot M) = \\  \frac{\euler^{U - K_M(U)}}{\euler^{U - K_M(U)}\cdot M} \mathcal L^* (\euler^{U - K_M(U)} \cdot M) = 
M^{-1} \mathcal L^*(\euler^{U - K_M(U)} \cdot M)
 \end{multline*}

The computation in Equation \eqref{runningcomputation} gives

\begin{multline*}
  M^{-1} \mathcal L^*(\euler^{U - K_M(U)} \cdot M) = \\ \euler^{U - K_M(U)} \sum_{i,j=1}^d \partiald {x_i}\left[a_{ij}(x)\left(\partiald {x_j} U(x) - x_j\right)\right] + \\ \euler^{U - K_M(U)} \sum_{i,j=1}^d a_{ij}(x)\left(\partiald {x_i} U(x) - x_i\right)\left(\partiald {x_j} U(x) - x_j\right).
\end{multline*}
\end{example}

We want now to consider the weak form of the operator, which is defined for each $V \in \dBspaceat p$ by
\begin{align*}
  \scalarat p {\mathcal A p}{V} &= \int p(x)dx \  p(x)^{-1} \sum_{i,j=1}^d \partiald {x_i} a_{ij}(x) \partiald {x_j} p(x) \ V(x) \\ &= \sum_{i,j=1}^d \int dx \  \partiald {x_i} a_{ij}(x) \partiald {x_j} p(x) \ V(x) \\ &= - \sum_{i,j=1}^d \int dx \ a_{ij}(x) \partiald {x_j} p(x) \partiald {x_i} V(x).
\end{align*}
Note that the weak form we have defined at each $p$ is just the usual weak form of the operator $\mathcal L^\ast$, so that it is negative definite. If we proceed with the exponential charts and Equation \eqref{eq:runningfirst} we get
\begin{align*}
  \scalarat p {\mathcal A p}{V} &= - \sum_{i,j=1}^d \int p(x)dx \ a_{ij}(x) (\partiald {x_j} U(x) - x_j) \partiald {x_i} V(x) \\ &= \sum_{i,j=1}^d \scalarat p {a_{ij}(X)(X_j - \partial_j U)} {\partial_i V} \\ &= \sum_{i,j=1}^d \scalarat p {a_{ij}(X)X_j} {\partial_i V}  - \sum_{i,j=1}^d \scalarat p {a_{ij}(X)\partial_j U } {\partial_i V}.
\end{align*}

Note that $U$ belongs to $\dBspaceat M$, so that $X_j$ and $\partial_j U$ both belong to $\Lexp M$. It is sufficient to assume $[a_{ij}]$ uniformly bounded. Weaker conditions are allowed, as we actually need to assume that the multiplication operator $W \mapsto a_{ij}(X)W$ maps $\LlogL p$ into itself for all $p$.

To define a Galerkin-style projection, we want finite dimensional subspaces $V_n(p)$ of the fibers $\dBspaceat p$. Such subspaces are obtained from a reference one $V_n(M)$ via the application of the exponential parallel transport. Assume $V_n \in \dBspaceat M$ is a vector space of dimension $n$ and take $U \in V_n$ and $V \in V_n(p) = \etransport M p V_n$. As the exponential transport has no effect on the partial derivatives, we have for $U,V \in \dBspaceat M$

\begin{align*}
  \scalarat p {\mathcal A p}{\etransport M p V} &= \sum_{i,j=1}^d \scalarat p {a_{ij}(X)(X_j - \partial_j)U} {\partial_i V} \\ 
&= \sum_{i,j=1}^d \scalarat p {a_{ij}(X) X_j} {\partial_i V} - \sum_{i,j=1}^d \scalarat p {a_{ij}(x)\partial_j U} {\partial_i V} 
\end{align*}

Let $(W_1,\dots,W_n)$ be a basis of $V_n$, so that $(W_1 - \expectat p {W_1},\dots,W_n - \expectat p {W_n})$ is a basis of $V_n(p)$. We can write

\begin{align*}
  U &= \sum_{h=1}^n \theta_h W_h \\
  V &= \sum_{k=1}^n \alpha_k W_k  \\
\end{align*}
and
\begin{equation*}
  \scalarat p {\mathcal A p}{\etransport M p V} = \sum_{h,k=1}^n \theta_h \alpha_k \sum_{i,j=1}^d \scalarat p {a_{ij}(X)(X_j - \partial_j) W_h} {\partial_i W_k}
\end{equation*}
Equivalently,
\begin{equation*}
 \scalarat p {\mathcal A p}{\etransport M p W_k} = \sum_{h=1}^n \theta_h \sum_{i,j=1}^d \scalarat p {a_{ij}(X)(X_j - \partial_j) W_h} {\partial_i W_k}, \quad k = 1,\dots, n
\end{equation*}

In the exponential family of densities of the form
\begin{equation*}
  p = \expof{\sum_{h=1}^n \theta_k W_k - \psi(\theta)} \cdot M
\end{equation*}
we look for a curve $t \mapsto p(t)$ whose score $Dp(t)$ is such that
\begin{equation}\label{eq:galerkin1}
  \scalarat {p(t)} {Dp(t) - \mathcal Ap(t)} {\etransport M {p(t)} W_k} = 0, \quad k=1,\dots,n.
\end{equation}
In fact, the curve $t \mapsto (p(t),Dp(t) - \mathcal Ap(t))$ belongs to a statistical bundle, hence has to be checked against a moving frame. The score can be written in the moving frame as
\begin{equation*}
  Dp(t) = \frac {\dot p(t)}{p(t)} = \sum_{h=1}^n \dot \theta_h(t) \etransport M {p(t)} W_h
\end{equation*}
so that 
\begin{equation*}
  \scalarat {p(t)} {Dp(t)} {\etransport M p W_k} =  \sum_{h=1}^n \dot \theta_h(t)  \scalarat {p_{\theta(t)}} {\etransport M {p(t)} W_h} {\etransport M p W_k} =  \sum_{h=1}^n g_{hk}(t)\dot \theta_h(t),
\end{equation*}
where we have used the Fisher matrix
\begin{equation*}
  g(\theta) = [\scalarat {p_{\theta}} {\etransport M {p_\theta} W_h} {\etransport M {p_\theta} W_k}]_{h,k} = [\covat {p_{\theta}} {W_h} {W_k}]_{h,k} = \hessianof{\psi(\theta)}.
\end{equation*}
Equation \eqref{eq:galerkin1} becomes
\begin{equation}\label{eq:galerkin2}
  \sum_{h=1}^n g_{kh}(\theta(t)) \dot\theta_h(t) = \sum_{h=1}^n \sum_{i,j=1}^d \scalarat {p_{\theta(t)}} {a_{ij}(X)(X_j - \partial_j) W_h} {\partial_i W_k} \theta_h(t),
\end{equation}
for all $k=1,\dots,n$.

If the inverse Fisher matrix is $g(\theta)^{-1} = [g^{lk}(\theta)]$, we can multiply the equation by $g^{lk}(\theta(t))$ and sum over $k$ to get the system of non linear differential differential equations:
\begin{equation}\label{eq:galerkinheat}
\dot\theta_l(t) = \sum_{h=1}^n \sum_{i,j=1}^d \scalarat {p_{\theta(t)}} {a_{ij}(X)(X_j - \partial_j) W_h} {\partial_i \sum_{k=1}^d g^{lk}(\theta(t))W_k} \theta_h(t),
\end{equation}
for all $l=1,\dots,n$.

We have shown that it is possible, at least in principle, to derive Galerkin-type approximations of our running example. To proceed to a practical implementation it would be necessary to choose a suitable basis $(W_1,\dots,W_n)$ for which the Galekin equation \eqref{eq:galerkinheat} is computable. 

We now turn to examine from a different perspective a second example, the Fokker-Plank equation. 

\subsection{Fokker-Planck Equation in statistical manifold coordinates}


We could apply the same techniques we used in the running example pari passu to the Fokker-Planck equation
\eqref{eq:FP}, keeping in mind the definition of the related operators ${\cal L}$ and ${\cal L}^\ast$. 
However, we will proceed at a low pace given the more complicated nature of \eqref{eq:FP} compared to our running example. We proceed step by step by showing how the specific structure of \eqref{eq:FP} is dealt with in the statistical manifold context of this paper.

We may want to avoid using necessarily the Gaussian density $M$ as background density, so for simplicity in this section  we work in a single chart and assume the equation is written until the first exit time from the manifold. For example, again in the case $c_1(x)=x, c_2(x)=x^2, \ldots,c_n(x) =x^n$, $n$ even natural number, this would correspond to the first exit time from \{$\theta_n <0\}$. We might avoid the exit time by introducing a suitable background density, for example $M_{1,n+2}$, but for simplicity we do not assume a background density in the derivation. We will discuss again the possible use of a background density when considering the ${\cal L}$ eigenfunctions later.  

Now we rewrite equation (\ref{eq:FP}) in exponential
coordinates.
%
Consider as local reference density the
solution $p_t$ of FPE at time $t$. We are now working around $p_t$.
Consider a curve around $p_t$ corresponding to the
solution of FPE around time $t$ expressed in $B_{p_t}$
coordinates:
\begin{eqnarray*}
h &\mapsto& s_{p_t}(p_{t+h})=: u_h.
\end{eqnarray*}
The function $u_h$ represents the expression in coordinates of
the density
\begin{eqnarray} \label{coordin}
p_{t+h} = \exp[u_h - K_{p_t}(u_h)] p_t =: e_h p_t.
\end{eqnarray}
Now consider FPE around $t$, i.e.
\begin{eqnarray*}
\frac{\partial p_{t+h}}{\partial h} = {\cal L}_{t+h}^\ast p_{t+h}.
\end{eqnarray*}
Substitute (\ref{coordin}) in this last equation in order to obtain
\begin{eqnarray*}
\frac{\partial e_h p_t}{\partial h} = {\cal L}_{t+h}^\ast (e_h p_t).
\end{eqnarray*}
Write
\begin{eqnarray*}
\frac{\partial e_h}{\partial h} = \frac{{\cal L}_{t+h}^\ast (e_h
p_t)}{p_t}
\end{eqnarray*}
and set $h=0$, since we are concerned with the behavior in $t$.
Notice that $e_0=\exp[u_0-K_{p_t}(u_0)]=\exp(0)=1$,
and that
\begin{eqnarray*}
\left.\frac{\partial e_h}{\partial h}\right|_{h=0} = \{e_h
\frac{\partial [u_h - K_{p_t}(u_h)]}{\partial h}\}|_{h=0}
=\frac{\partial [u_h - K_{p_t}(u_h)]}{\partial h}|_{h=0}.
\end{eqnarray*}
Moreover, by straightforward computations (write explicitly
the map $K_{p_t}$, use
$u_h=s_{p_t}(p_{t+h})$ and differentiate wrt $h$ under the expectation
$E_{p_t}$) one verifies
\begin{eqnarray*}
\left. \frac{\partial K_{p_t}(u_h)}{\partial h} \right |_{h=0}=0,
\end{eqnarray*}
so that
\begin{equation} \label{equation-u}
\left. \frac{\partial u_h}{\partial h} \right |_{h=0}
= \frac{{\cal L}_t^\ast p_t}{p_t}
\end{equation}
is the formal representation in exponential coordinates
of the vector in the statistical exponential (vector) bundle $\expbundle$ at $p_t$.
Notice that, again by straightforward computations, and omitting the time arguments in $f$ and $a$ for brevity,
\begin{eqnarray} \label {alpha-def}
   \alpha_t := \alpha_t(p) = \frac{{\cal L}_t^\ast p}
          {p}
   &=&  - \sum_{i=1}^N \left( f_i\, \frac{\partial}{\partial x_i}(\log p)
    + \frac{\partial f_i}{\partial x_i}\right) +
   \\ \nonumber \\ \nonumber
   &+& \half \sum_{i,j=1}^N \left[\,
   a_{ij}\,
   \frac{\partial^2}{\partial x_i \partial x_j}(\log p)
   + a_{ij}\, \frac{\partial}{\partial x_i}(\log p) \frac{\partial}{\partial x_j}(\log p)\,+ \right.
   \\ \nonumber \\  \nonumber
   &+&  \left . 2\, \frac{\partial a_{ij}}{\partial x_j}\,
   \frac{\partial}{\partial x_i}(\log p)
   + \frac{\partial^2 a_{ij}}{\partial x_i \partial x_j} \,\right]\ .
\end{eqnarray}
Summarizing: consider the curve expressing FPE around $p_t$ in
$B_{p_t}$ coordinates. Its tangent vector/fiber in the statistical exponential bundle $\expbundle$ at $p_t$  is given
by $\alpha_t$.
Under suitable assumptions on the coefficients $f_t$ and $a_t$
the function
$\alpha_t$ belongs to $B_{p_t}$, according to the
convention that locally identifies the tangent bundle of a normed space
with the normed space itself.
To render the computation not only formal
we need $\alpha_t$ to be really a tangent vector/fiber for our
bundle structure. This in turn requires the curve
$t \mapsto p_t$ to be differentiable in the proper sense.
Below we give a regularity result
expressing a condition under which this happens and whose proof
is immediate.
Moreover, we give a condition which can be used to check
whether the evolution stays in a given submanifold.
\begin{proposition}
[Regularity and finite dimensionality
of the solution of FPE] \label{th:reg}
\hspace{2cm}
\begin{itemize}
\item[(i)] If the map $t \mapsto p_t$ is differentiable
in the manifold ${\cal E}$ then $\alpha_t$ given in
eq. (\ref{alpha-def}) is a tangent vector.
\item[(ii)] If the map
$t \mapsto \alpha_t$ is continuous
at $t_0$ into $L^\Phi$,
then $t \mapsto p_t$ is differentiable at $t_0$
as a map into ${\cal E}$.
\item[(iii)]
Let be given a submanifold ${\cal N}$ such
that $p_0 \in {\cal N}$. If the previous condition
is satisfied and
\begin{displaymath}
\frac{{\cal L}_t^\ast p}{p}
\end{displaymath}
is tangent to ${\cal N}$ at $p$ for all $p \in {\cal N}$,
then $p_t$ evolves in ${\cal N}$.
\end{itemize}
\end{proposition}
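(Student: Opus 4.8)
The plan is to treat the three items in turn, relying on the coordinate computation already carried out in Equations \eqref{coordin}--\eqref{equation-u}, which expresses the velocity of the solution curve in the chart $s_{p_t}$.

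Item (i) is a rereading of \eqref{equation-u}. By definition of the differentiable structure of $\maxexp$, differentiability of $t\mapsto p_t$ means that the coordinate curve $h \mapsto u_h = s_{p_t}(p_{t+h})$ is differentiable at $h=0$ as a map into the model space $\Bspaceat{p_t}$, its derivative being the velocity of the curve. The computation leading to \eqref{equation-u}, which uses $\left.\partial_h K_{p_t}(u_h)\right|_{h=0}=0$, identifies this derivative with ${\cal L}_t^\ast p_t/p_t = \alpha_t$; hence $\alpha_t \in \Bspaceat{p_t}$, i.e. $\alpha_t$ is a fibre element of $\expbundle$ at $p_t$, which is exactly the assertion.

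For item (ii) I would prove the converse directly. From Assumption (D) and the FPE one has the pointwise identity $\partial_h \log p_{t_0+h} = {\cal L}_{t_0+h}^\ast p_{t_0+h}/p_{t_0+h} = \alpha_{t_0+h}$, and centering at $p_{t_0}$ yields, in the chart $s_{p_{t_0}}$, the representation $u_h = \int_0^h \left(\alpha_{t_0+s} - \expectat{p_{t_0}}{\alpha_{t_0+s}}\right) ds$. The integrand is a continuous $\Bspaceat{p_{t_0}}$-valued function of $s$ near $0$, because $t\mapsto\alpha_t$ is continuous into $L^\Phi$ at $t_0$ and the centering map $V \mapsto V - \expectat{p_{t_0}}{V}$ is bounded; differentiating this integral then shows $h\mapsto u_h$ is differentiable at $0$ with derivative $\alpha_{t_0}$, i.e. $t\mapsto p_t$ is differentiable at $t_0$ in $\maxexp$.

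Item (iii) I would deduce from uniqueness of the FPE solution. By item (ii) the hypotheses let one solve the ODE $Dq_t = \alpha_t(q_t)$ with $q_0 = p_0$, and the assumption that $p \mapsto {\cal L}_t^\ast p/p$ is tangent to $\mathcal N$ at every $p \in \mathcal N$ keeps the velocity of this integral curve in $T_q\mathcal N$, so that, started at $p_0 \in \mathcal N$, it never leaves $\mathcal N$. Along such a curve the score equals ${\cal L}_t^\ast q_t/q_t$, hence $q_t$ solves the Fokker--Planck equation \eqref{eq:FP} with datum $p_0$; the uniqueness granted by Assumptions (A)--(D) then forces $p_t = q_t$, so $p_t$ evolves in $\mathcal N$. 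The genuine obstacle is exactly this last step: the finite-dimensional flow-invariance principle must be transported to the infinite-dimensional Banach-manifold setting, which requires solvability of the restricted ODE on $\mathcal N$ together with global uniqueness of the FPE solution to glue the two curves. I would therefore invoke the well-posedness of the state equation and Assumption (D) explicitly and treat the residual Lipschitz/solvability conditions on $\alpha_t$ as part of the standing smoothness hypotheses rather than re-deriving them.
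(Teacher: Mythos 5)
Your proposal is correct and follows essentially the route the paper intends: the paper states that the proof is ``immediate'' and supplies no further argument beyond the coordinate computation in Equations \eqref{coordin}--\eqref{equation-u}, which is exactly what you use for items (i) and (ii), and item (iii) is the standard flow-invariance-plus-uniqueness argument the paper implicitly relies on. Your explicit flagging of the Banach-manifold invariance step and of the need for well-posedness of the restricted ODE in item (iii) is, if anything, more careful than the paper's own treatment.
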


Sufficient conditions under which condition (ii) in the
proposition holds are related to boundedness for all possible $T>0$ and $i,j$ of $f$, $\partial_{x_i} f$,
$a$, $\partial_{x_i} a$, $\partial^2_{x_i x_j} a$ in $[0 \ \ T] \times \Rx$
plus classical assumptions  ensuring  (D).
This follows from the fact that if $\alpha_t(x)$ is continuous
and bounded in both $t$ and $x$, then it is continuous as
a map $t \mapsto \alpha_t$ from $[0 \ \ T]$ to $L^\Phi$.

\subsection{Projection of the infinite dimensional Fokker-Planck equation}
\label{projfm}
The references \cite{brigogyor} and \cite{2009arXiv0901.1308B} present a few examples of SDEs whose densities satisfy Proposition \ref{th:reg}. These are special cases of SDEs whose solution density, satisfying the related Fokker Planck equation, stays in a finite dimensional exponential family.  Examples include the trivial linear Gaussian SDEs case, nonlinear SDEs with solutions having unit variance Gaussian law,  and SDEs with prescribed diffusion coefficient $\sigma_t(x)$ and with prescribed stationary density in a given exponential family, among others.

However, in general the evolution of the density of the solution of a given SDE does not happen to satisfy Proposition \ref{th:reg} and one has to deal with the infinite dimensionality by choosing a finite dimensional approximation of the solution of the Fokker Planck equation.  We will now derive such an approximation based on a projection argument.

In reaching equation (\ref{equation-u}) we assumed
implicitly a few facts.
We are assuming that there always exists a neighborhood of
$h=0$ such that in this neighborhood $p_{t+h} \in {\cal E}({p_t})$.
Conditions under which this happens will be examined in the future.
We only remark that when projecting on a finite dimensional
exponential manifold, these conditions are not necessary for the
projected equation to exist and make sense, see below.
Neither we need equation (\ref{equation-u}) to have a solution
to obtain existence of the solutions of the projected equation.
Now we shall project this equation on a finite dimensional
parametrized exponential manifold $\EF c$.
We will assume the following on the family  $\EF c$ (see \cite{brigobernoulli} 
for other more specific assumptions):

\[ (E) \hspace{1cm} \mbox{We assume} \ \ \ \   c \in C^2.\]

A rapid projection computation based on Formula \eqref{eq:projexpu} and involving integration by parts between ${\cal L}$ and ${\cal L}^\ast$ and standard results
on the normalization constant $\psi(\theta)$ of exponential families
(such as $\partial_{\theta_i} \psi(\theta) = E_{\theta} c_i$)
yields
\begin{eqnarray*}
 {\cal P}_{t,\theta}
  &:=& \Pi_\theta
  \left[\frac{{\cal L}_t^\ast p(\cdot,\theta)}
          {p(\cdot,\theta)}\right] =
    E_{\theta}[{\cal L}_t c]^T \  g^{-1}(\theta)
        \ [c(\cdot) - E_{\theta} c],
\end{eqnarray*}
where integrals of vector functions are meant to be applied to their
components.
Note that this map is regular in $\theta$ under reasonable assumptions
on $f,a$ and $c$.
At this point we project equation (\ref{equation-u})
via this projection. By remembering expression (\ref{eq:tangspexp})
for tangent vectors and the above formula for the projection
we obtain the  following ($n$--dimensional)
ordinary differential equation (in vector form)
in the coordinates of the manifold $\EF c$:
\begin{eqnarray} \label{PFPEPAR}
   \dot{\theta}_t
   = g^{-1}(\theta_t)\; E_{\theta_t}\{{\cal L}_t\; c\}.
\end{eqnarray}
Notice that, as anticipated above,
equation (\ref{PFPEPAR})
is well defined and admits locally a unique solution
if  the following condition (ensuring existence of the
norm of $\alpha_t(p(\cdot,\theta_t))$ associated to the inner product
$\covat {p_{\theta_t}} {\cdot}{\cdot}$)
holds:
\begin{eqnarray} \label {alpha-cond}
(F) \hspace{1cm}   &&E_{\theta}\{\alpha_{t,\theta}^2\}<\infty \;\;
                  \forall \theta \in \Theta, \ \forall t \ge 0, 
   \\ \nonumber \\ \nonumber
   &&\alpha_{t,\theta} := \frac{{\cal L}_t^\ast p(\cdot,\theta)}
          {p(\cdot,\theta)} =
     - \sum_{i=1}^N  \left( f_i\,\frac{\partial}{\partial x_i}(\theta^T c)
    + \frac{\partial f_i}{\partial x_i}\right) +
       \\ \nonumber \\   \nonumber
   &&\hspace{1cm}  + \half  \sum_{i,j=1}^N \left[\,
   a_{ij}\,
   \frac{\partial^2}{\partial x_i \partial x_j}(\theta^T c)
   + a_{ij}\, \frac{\partial}{\partial x_i}(\theta^T c) \frac{\partial}{\partial x_j}(\theta^T c)\,+ \right.
   \\ \nonumber \\  \nonumber
   && \hspace{1cm} + \left. 2\, \frac{\partial a_{ij}}{\partial x_j}\,
   \frac{\partial}{\partial x_i}(\theta^T c)
   + \frac{\partial^2 a_{ij}}{\partial x_i \partial x_j} \,\right]\ .
\end{eqnarray}
We will assume such condition to hold in the following.
 Sufficient explicit conditions for (F) to hold for $\EF c$ can be easily given. For example, (F)  holds if $f$ and its first derivatives with respect to $x$, $a$ and its first and second derivatives with respect to $x$,  and $c$ and its first and second derivatives have at most polynomial growth, and if densities in $\EF c$ integrate any polynomial, see for example \cite{brigobernoulli}.

We have thus proven the following
\begin{proposition}
[Projected evolution of the density of an It\^o diffusion]
\label{proj-law}
Assume assumptions (A), (B),(C), (E) and (F) on the coefficients
$f, a$, on the initial condition $X_0$ of the It\^o diffusion $X$,
and on the sufficient statistics
$c_1,\dots,c_n$ of the exponential family $\EF c$ are satisfied.
Then the projection of Fokker-Planck equation describing the
evolution of $p_t = p_{X_t}$ onto $\EF c$  reads, in
$B_{p_t}$ coordinates:
\begin{eqnarray}\label{PFPE}
[c(\cdot) - E_{\theta_t}c]^T \dot{\theta}_t =
E_{\theta_t}[{\cal L}c]^T \  g^{-1}(\theta_t)
\ [c(\cdot) - E_{\theta_t}c],
\end{eqnarray}
and the differential equation describing the evolution of the
parameters for the projected density--evolution is
\begin{eqnarray*}
   \dot{\theta}_t
   = g^{-1}(\theta_t)\; E_{\theta_t}\{{\cal L}_t\; c\}.
\end{eqnarray*}
\end{proposition}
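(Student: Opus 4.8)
The plan is to combine the coordinate representation \eqref{equation-u} of the Fokker--Planck flow with the explicit orthogonal projection \eqref{eq:projexpu} onto the tangent space of $\EF c$. First I would recall that, by the discussion leading to \eqref{equation-u}, the score of the exact solution curve $h \mapsto p_{t+h}$, read in the chart $s_{p_t}$, is the fibre $\alpha_{t,\theta} = {\cal L}_t^\ast p(\cdot,\theta)/p(\cdot,\theta) \in \Bspaceat{p_\theta}$, whose square-integrability is guaranteed by assumption (F). The tangent space to $\EF c$ at $p_\theta$ is $V_\theta^1 = \spanvar{c_1 - E_\theta c_1, \dots, c_n - E_\theta c_n}$ by \eqref{eq:tangspexp}, and the orthogonal projector onto it in $L^2_0(p_\theta)$ is given by \eqref{eq:projexpu}. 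Hence the projected fibre is
\[
  \mathcal P_{t,\theta} = \Pi_\theta \alpha_{t,\theta} = \sum_{i,j=1}^n g^{ij}(\theta)\,\covat{p_\theta}{\alpha_{t,\theta}}{c_i}\,(c_j - E_\theta c_j),
\]
so the entire statement reduces to evaluating the covariances $\covat{p_\theta}{\alpha_{t,\theta}}{c_i}$.

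The core computation is then as follows. Since ${\cal L}_t 1 = 0$, one has $E_\theta \alpha_{t,\theta} = \int {\cal L}_t^\ast p_\theta\, dx = 0$, so that $\covat{p_\theta}{\alpha_{t,\theta}}{c_i} = E_\theta[\alpha_{t,\theta} c_i] = \int c_i\, {\cal L}_t^\ast p_\theta\, dx$. I would then integrate by parts, using the formal adjointness of ${\cal L}_t$ and ${\cal L}_t^\ast$, to move the operator onto $c_i$, obtaining $\int ({\cal L}_t c_i)\, p_\theta\, dx = E_\theta[{\cal L}_t c_i]$. Substituting this back gives exactly $\mathcal P_{t,\theta} = E_\theta[{\cal L}_t c]^T g^{-1}(\theta)[c - E_\theta c]$, the projected vector field appearing in the proposition.

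Finally, to obtain the projected evolution I would write the score of a curve $t \mapsto p(\cdot,\theta_t)$ inside $\EF c$ as $[c - E_{\theta_t}c]^T \dot\theta_t$ (again by \eqref{eq:tangspexp}) and impose that it equal the projected fibre $\mathcal P_{t,\theta_t}$; this is precisely \eqref{PFPE}. Since the centred statistics $c_1 - E_{\theta_t}c_1, \dots, c_n - E_{\theta_t}c_n$ are linearly independent in $\Bspaceat{p_{\theta_t}}$ (equivalently, the Fisher matrix $g(\theta_t)$ is invertible), matching their coefficients on the two sides --- or pairing both sides with each $c_k - E_{\theta_t}c_k$ and inverting $g$ --- yields $g(\theta_t)\dot\theta_t = E_{\theta_t}[{\cal L}_t c]$, hence the stated ODE $\dot\theta_t = g^{-1}(\theta_t)\, E_{\theta_t}[{\cal L}_t c]$.

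The step I expect to be the main obstacle is the rigorous justification of the integration by parts: one must ensure that the boundary terms at infinity vanish and that ${\cal L}_t c_i$ is $p_\theta$-integrable. This is where the regularity hypothesis (E) on $c$, the square-integrability condition (F), and the polynomial-growth assumptions on $f,a,c$ (together with the fact that densities in $\EF c$ integrate all polynomials) enter: they guarantee both that $\alpha_{t,\theta} \in L^2_0(p_\theta)$, so that the projection $\Pi_\theta$ is well defined, and that all the integrals above converge with no surviving boundary contributions.
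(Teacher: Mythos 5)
Your proposal is correct and follows essentially the same route as the paper: the paper's own ``rapid projection computation'' is exactly your combination of the projection formula \eqref{eq:projexpu}, the integration by parts between ${\cal L}$ and ${\cal L}^\ast$ giving $\covat{p_\theta}{\alpha_{t,\theta}}{c_i} = E_\theta[{\cal L}_t c_i]$, and the identification of tangent vectors via \eqref{eq:tangspexp}. Your closing remarks on conditions (E) and (F) match the role the paper assigns to them.
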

Notice that the projected equations exist under conditions
which are more general than conditions for existence of the
solution of the original Fokker-Planck equation.
For more details see \cite{brigogyor}.
Notice also that this equation is substantially the same we had derived in the running example with a Galerkin-inspired approach: Compare \eqref{PFPEPAR} with \eqref{eq:galerkinheat} after viewing the right hand side of \eqref{eq:galerkinheat} as coming from an integration by parts. 

Finally, we point out a result previously given in 
\cite{brigogyor} and \cite{2009arXiv0901.1308B},  see also \cite{brigospl}, where it is explained, for the case $N=1$, how one can build a SDE whose solution has a density evolving exactly as the projected density $t \mapsto p(\cdot,\theta_t)$. This allows one to design SDEs whose marginal laws evolve in a given exponential family.  Here we only briefly state the related result:

\begin{proposition} [Interpretation of the projected density--evolution as the exact density of a different SDE]
Assume assumptions (A), (B), (C), (E) and (F) on the coefficients
$f, a = \sigma^2$ and on the initial condition $X_0$
of the It\^o diffusion 
\[ dX_t = f_t(X_t) dt + \sigma_t(X_t) dW_t, \ \ X_0\]
and on the sufficient statistics
$c$ of the exponential family  $\EF c$ are satisfied.
Let $p(\cdot,\theta_t)$ be the projected density evolution, according to
proposition \ref{proj-law}. Define
\begin{eqnarray*}
d Y_t &=& u^\ast_t(Y_t) dt + \sigma_t(Y_t) dW_t, \\ \\
 u^\ast_t(x) &:=& \half \frac{\partial a_t}{\partial x}(x) +
          \half a_t(x) \theta_t^T  \frac{\partial c}{\partial x}(x)
          + \\ \nonumber \\ \nonumber
&& - E_{\theta_t}\{{\cal L}_t c\}^T g^{-1}(\theta_t)
                 \int_{-\infty}^x (c(y) - E_{\theta_t}c)
                 \ \exp[\theta_t^T (c(y) - c(x))] dy.
\end{eqnarray*}
Then $Y$ is an It\^o diffusion whose density--evolution coincides with the
projected density--evolution
$p(\cdot,\theta_t)$ of $X_t$ onto  $\EF c$.
\end{proposition}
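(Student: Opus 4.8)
The plan is to verify directly that the projected density $p(\cdot,\theta_t)$ solves the Fokker--Planck equation associated with the auxiliary diffusion $Y$, and then to invoke uniqueness of that equation to conclude that $p(\cdot,\theta_t)$ is the law of $Y_t$. Throughout this is a scalar ($N=1$) computation, $a_t = \sigma_t^2$, and the forward operator of $Y$ is
\begin{equation*}
{\cal L}_t^{\ast,Y} q = - \frac{\partial}{\partial x}\left(u_t^\ast\, q\right) + \half \frac{\partial^2}{\partial x^2}\left(a_t\, q\right),
\end{equation*}
so the whole matter reduces to showing $\frac{\partial}{\partial t} p(\cdot,\theta_t) = {\cal L}_t^{\ast,Y} p(\cdot,\theta_t)$.

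First I would compute the left-hand side. Since $p(x,\theta) = \expof{\theta^T c(x) - \psi(\theta)}$ and $\partial_{\theta_i}\psi(\theta) = E_\theta c_i$, differentiating in $t$ gives
\begin{equation*}
\frac{\partial}{\partial t} p(x,\theta_t) = p(x,\theta_t)\, \dot\theta_t^T\left(c(x) - E_{\theta_t} c\right).
\end{equation*}
Substituting the projected ODE $\dot\theta_t = g^{-1}(\theta_t)\, E_{\theta_t}\{{\cal L}_t c\}$ of Proposition~\ref{proj-law} and using that the Fisher matrix $g$, hence $g^{-1}$, is symmetric, this becomes $\frac{\partial}{\partial t}p(x,\theta_t) = p(x,\theta_t)\, v_t(x)$, where $v_t(x) = E_{\theta_t}\{{\cal L}_t c\}^T g^{-1}(\theta_t)\left(c(x) - E_{\theta_t} c\right)$ and, crucially, $E_{\theta_t}[v_t] = 0$.

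The core step is to reverse-engineer $u_t^\ast$ so that the right-hand side matches. I would rewrite ${\cal L}_t^{\ast,Y}$ as minus the spatial divergence of the probability flux $J_t = u_t^\ast\, p(\cdot,\theta_t) - \half \frac{\partial}{\partial x}\left(a_t\, p(\cdot,\theta_t)\right)$, so that the target equation becomes the first-order identity $\frac{\partial}{\partial x} J_t = - p(\cdot,\theta_t)\, v_t$. Since $E_{\theta_t}[v_t]=0$, the integral of the right-hand side over the whole line vanishes, so integrating from $-\infty$ under a vanishing-flux condition at $-\infty$ gives $J_t(x) = -\int_{-\infty}^x p(y,\theta_t)\, v_t(y)\,dy$. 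Solving for $u_t^\ast$ and using $\frac{\partial}{\partial x}\log p(\cdot,\theta_t) = \theta_t^T \frac{\partial c}{\partial x}$, one finds $\half\, p^{-1}\frac{\partial}{\partial x}(a_t p) = \half \frac{\partial a_t}{\partial x} + \half\, a_t\, \theta_t^T \frac{\partial c}{\partial x}$, which are the first two terms of the stated $u_t^\ast$; for the remaining term, the ratio $p(y,\theta_t)/p(x,\theta_t) = \expof{\theta_t^T(c(y)-c(x))}$ together with pulling out the $y$-independent row vector $E_{\theta_t}\{{\cal L}_t c\}^T g^{-1}(\theta_t)$ reproduces exactly the integral expression in the proposition. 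This closes the algebraic verification.

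The main obstacle is the analytic, not the algebraic, side. I would have to ensure that $Y$ is a well-posed It\^o diffusion admitting a density: the drift $u_t^\ast$ is generally unbounded, so non-explosion and existence (and smoothness) of the law's density need to be argued from (E), (F) and the regularity of $a_t,\sigma_t$, much as (A)--(D) were used for $X$. One then needs uniqueness of the density solution of the Fokker--Planck equation for $Y$, so that the solution we have exhibited is necessarily the law of $Y_t$. The delicate point inside the construction is the vanishing-flux boundary condition at $-\infty$, which fixes the constant of integration; consistency at $+\infty$ is guaranteed precisely by $E_{\theta_t}[v_t]=0$, while finiteness of $J_t$ rests on the integrability of exponential-family densities against polynomially growing integrands, itself granted by (A) and (F). Finally, the antiderivative argument is intrinsically one-dimensional: the hypothesis $N=1$ is exactly what permits solving $\frac{\partial}{\partial x}J_t = -p\, v_t$ by the single integration $\int_{-\infty}^x$, whereas a multivariate version would require solving a genuine divergence equation.
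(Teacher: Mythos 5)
Your verification is correct and is essentially the intended argument: the paper itself gives no proof here (it only states the result, deferring to the cited references \cite{brigogyor,2009arXiv0901.1308B,brigospl}), and the construction in those references is exactly your flux computation --- write the Fokker--Planck equation of $Y$ as $\partial_t p = -\partial_x J$, use $\partial_t p(\cdot,\theta_t) = p\, v_t$ with $E_{\theta_t}[v_t]=0$ from the projected ODE, integrate once from $-\infty$ with vanishing flux, and solve for $u^\ast_t$. Your closing remarks correctly identify the remaining analytic burden (well-posedness of $Y$, uniqueness for its Fokker--Planck equation, and the intrinsically one-dimensional nature of the antiderivative step), which is also where the cited works place their hypotheses.
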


\subsection{Quality of the finite dimensional approximation}
In order to assess how good the projection is locally, and to have a measure for how far the projected evolution is, locally, from the original one, we now define a local projection residual as the duality-based norm of the Fokker Planck infinite dimensional vector field minus its finite-dimensional orthogonal projection. Define the vector field minus its projection as
\[ \varepsilon_t(\theta) := \frac{{\cal L}_t^\ast p(\cdot,\theta)}
          {p(\cdot,\theta)} - \Pi_\theta
  \left[\frac{{\cal L}_t^\ast p(\cdot,\theta)}
          {p(\cdot,\theta)}\right] .\]
          Then the projection residual $R_t$ is defined as 
\[ R^2_t := \covat {p_{\theta}}{\varepsilon_t(\theta)}{\varepsilon_t(\theta)} = \left\langle \varepsilon_t(\theta), \varepsilon_t(\theta)\right\rangle_{p(\cdot,\theta)}
\]
and can be computed jointly with the projected equation evolution \eqref{PFPEPAR} to have a local measure of the goodness of the approximation involved in the projection.

Monitoring the projection residual and its peaks can be helpful in tracking the projection method performance, see also  \cite{brigobernoulli,brigoieee} for examples of $L^2$-based projection residuals in the more complex case of the Kushner-Stratonovich equations of nonlinear filtering. However, the projection residual only allows for a local approximation error numerical analysis. To have an idea of how good the approximation is we need to relate it to the global approximation error.

We could define the global approximation error as follows. Rather than projecting the Fokker Planck equation vector field instant by instant, we could project the true solution as a point onto the exponential family $\EF c$. To appreciate the difference with what we have done so far, let us recap the method we have followed so far, which we call ``vector field projection''. We denote time steps with $0,1,2,\ldots$ for simplicity but in the real equation they correspond to infinitesimal time steps. To make the point, we are artificially separating projection and propagation and the local and global errors. This is not completely precise but allows us to make an important point on our method. 

\begin{itemize}
\item Assume at time 0 we have $p_0(x) = p(x;\theta_0)$, so we start from the family.
\item Now the vector field of Fokker Planck $\frac{{\cal L}^\ast p(\cdot,\theta_0)}{p(\cdot,\theta_0)}$ is not in the tangent space of $\EF c$ in general and therefore would bring us out of the exponential family at time 1. To stay in the exponential family, we project this vector field onto the tangent space of $\EF c$ and follow the projected vector for the evolution, moving on the tangent space to time 1. By doing this, we get a new $p(\cdot,\theta_1)$ on the manifold. 
\item Now we start again. We apply the vector field of the Fokker Planck equation to $p(\cdot,\theta_1)$. Note that this is not right if comparing with the true evolution. We are applying the vector field to the wrong point at time 1, because $p(\cdot,\theta_1)$ is not  the true $p_1$, and now we are not applying the vector field to $p_1$ but to $p(\cdot,\theta_1)$. But even starting from $p(\cdot,\theta_1)$, the vector field
$\frac{{\cal L}^\ast p(\cdot,\theta_1)}{p(\cdot,\theta_1)}$ is not in the tangent space of $\EF c$ in general and therefore would bring us out of EF. To stay in EF, we project this vector field onto the tangent space of the exponential family and follow the projected vector for the evolution, moving on the tangent space. By doing this, we get a new $p(\cdot,\theta_2)$ at time 2 on the manifold. 
\item We continue like this and obtain an evolution of the manifold, but none of the projections was based on projecting the vector field starting from the true solution, except for the first step. 
\end{itemize}

This method has two types of approximations, so to speak: on one hand, we approximate the true equation vector field with a projection. On the other hand, we apply the true equation vector field not to the true solution but already to an approximated solution coming from the previous steps. The two steps are related in the limit, clearly, and with some very sophisticated analysis one might be able to bound the global error based on the local one.  However, let us continue with the artificial setting with separate steps. We can say that while it is possible to measure locally the error in the first type of approximation, for example via $R_t$ above, it is difficult to measure the effect  of the second one, unless one obtains a very precise approximation of the true solutions by some other method and then compares the outputs. But if one has the true solution to a very good precision already, there is clearly no point in finding a finite dimensional approximation. 

If we leave the global approximation error analysis aside for a minute, the big advantage of the above method is that it does not require us to know the true solution of the Fokker Planck equation to be implemented. Indeed, Equation \eqref{PFPEPAR} works perfectly well without knowing the true solution $p_t$. 

As we mentioned above, to study the global error, we now introduce a second projection  method. This one will require us to know the true solution, so as an approximation method it will be pointless. However, it will help us with the global error analysis, and a modification of the method based on the assumed density approximation will allow us to find an algorithm that does not require the true solution.

This method works as follows.

\begin{itemize}
\item Assume at time 0 we have $p_0(x) = p(x;\theta_0)$, so we start from the family.
\item Now the vector field of Fokker Planck $\frac{{\cal L}^\ast p(\cdot,\theta_0)}{p(\cdot,\theta_0)}$ is not in the tangent space of $\EF c$ in general and therefore would bring us out of the exponential family at time 1. We accept this, follow it, and move to $p_1$ outside $EF(c)$. To go back to EF, we project $p_1$ onto the exponential family by minimizing the divergence, or Kullback Leibler information of $p_1$ with respect to $\EF c$, finding the orthogonal projection of $p_1$ on EF. It is well known that the orthogonal projection in Kullback Leibler divergence is obtained by matching the sufficient statistics expectations of the true density. Namely, the projection is the particular exponential density of $\EF c$ with $c$-expectations 
          \[ \eta_1 = E_{p_1}[c] . \]
          See for example \cite{brigoime} for a quick proof and an application to filtering in discrete time. We know that $\EF c$, besides $\theta$, admits another important coordinate system, the expectation parameters $\eta$. If one defines 
          \[ \eta(\theta) = E_{p(\theta)} [ c]\]
          then $d \eta(\theta) = g(\theta) d\theta$ where $g$ is the Fisher metric.
          Thus, we can take the $\eta_1$ above coming from the true density $p_1$ and look for the exponential density $p(\cdot;\eta_1)$ sharing these $c$-expectations. This will be the closest in Kullback Leibler to $p_1$ in $\EF c$. 
         \item Now from $p_1$ we keep following the true vector field of the Fokker Planck equation, and in general we start from outside the manifold $\EF c$  and we stay outside. 
   We reach $p_2$. Now again we project $p_2$ onto the exponential family in Kullback Leibler, finding $\eta_2 = E_{p_2}[c]$ and the projection is the exponential density $p(\cdot;\eta_2)$. 
       \item We continue like this
       \end{itemize}

 The advantage of this method compared to the previous vector field based one is that we find at every time the best possible approximation (``maximum likelihood'') of the true solution in EF. The disadvantage is that in order to compute the projection at every time, such as for example $\eta_1 = E_{p_1}[c]$, we need to know the true solution $p_1$ at that time. Clearly if we know the true solution there is no point in developing an approximation by projection in the first place.
 
 However it turns out that we can somewhat combine the two ideas and analyze the error if we invoke the assumed density approximation. This works as follows.
 
\subsection{Maximum likelihood estimation and ${\cal L}$ eigenfunctions}
 
 Consider the second type of projection, namely
 \[ \eta_t = E_{p_t}[c].\]
 Differentiate both sides ($d_t$ here denotes differentiation with respect to time) to obtain
 \[ d_t \eta_t = d_t \int c(x) p_t(x) dx = \int c(x) d_t p_t(x) dx = \int c(x) {\cal L}^\ast_t p_t(x) dx = E_{p_t} [ {\cal L} c] dt\] 
          so that
           \[ d_t \eta_t = E_{p_t} [ {\cal L} c] dt .\]
This last equation is not a closed equation, since $p_t$ in the right hand side is not characterized by $\eta$. Thus, to be solved this equation should be coupled with the original Fokker Planck for $p_t$. Again, this makes this equation useless as an approximation. However, at this point we can close the equation by invoking the assumed density approximation (see \cite{brigobernoulli}): we replace $p_t$ with the exponential density $p(\cdot,\eta_t)$. We obtain
           \[ d_t \tilde{\eta}_t = E_{p(\cdot,\tilde{\eta}_t)} [ {\cal L} c] dt .\]          
 This is now a finite dimensional ODE for the expectation parameters. There is more:
 if we use $d \eta = g(\theta) d\theta$ and substitute, in the $\theta$ coordinates this last equation is the same as our earlier vector field based projected equation
 \eqref{PFPEPAR}. 
 \begin{theorem} Closing the evolution equation for the Kullback Leibler projection of the Fokker Planck solution onto $\EF c$ by forcing an exponential density on the right hand side is equivalent to the approximation based on the vector field projection in Fisher metric. 
 \end{theorem}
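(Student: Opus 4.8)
The plan is to show that the two differential equations trace out the same curve in $\EF c$, written in the two mutually dual coordinate systems of the exponential family---the natural parameters $\theta$ and the expectation parameters $\eta$---and that the change of chart between them is exactly multiplication by the Fisher matrix $g(\theta)$.

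First I would place the two equations side by side. The vector-field projection in the Fisher metric is \eqref{PFPEPAR}, namely $\dot\theta_t = g^{-1}(\theta_t)\,E_{\theta_t}\{\mathcal L_t c\}$. The second method starts from the exact, non-closed moment evolution $d_t\eta_t = E_{p_t}[\mathcal L c]\,dt$ and closes it by the assumed-density approximation, i.e.\ by replacing the true solution $p_t$ with the member of $\EF c$ carrying the current expectation parameters; this yields the finite-dimensional ODE $\dot{\tilde\eta}_t = E_{p(\cdot,\tilde\eta_t)}[\mathcal L c]$.

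Next I would invoke the standard duality of exponential families. For $p_\theta = \exp(\theta^T c - \psi(\theta))$ one has $\eta_i(\theta) = E_{\theta}[c_i] = \partial_{\theta_i}\psi(\theta)$, so the Jacobian of $\theta \mapsto \eta(\theta)$ is the Hessian of $\psi$, which is precisely the Fisher information matrix $g(\theta) = [\covat{p_\theta}{c_i}{c_j}]$. Hence $d\eta = g(\theta)\,d\theta$, as already recorded before the statement, and since $\psi$ is strictly convex on the open convex set $\Theta$ the correspondence $\theta \leftrightarrow \eta$ is a diffeomorphism onto its image. The remaining computation is then the chain rule: writing $\tilde\eta_t = \eta(\theta_t)$ and differentiating gives $\dot{\tilde\eta}_t = g(\theta_t)\,\dot\theta_t$, while $E_{p(\cdot,\tilde\eta_t)}[\mathcal L c] = E_{\theta_t}[\mathcal L c]$ because $p(\cdot,\tilde\eta_t)$ and $p(\cdot,\theta_t)$ are the same density. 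Substituting gives $g(\theta_t)\,\dot\theta_t = E_{\theta_t}[\mathcal L c]$, and inverting the positive-definite Fisher matrix recovers \eqref{PFPEPAR} exactly; running the identities backwards gives the converse, so the two ODEs are equivalent.

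I do not expect a genuine obstacle, since the statement is at bottom a change-of-variables identity between the $\theta$- and $\eta$-representations of $\EF c$. The only points deserving care are, first, confirming that the assumed-density closure averages $\mathcal L c$ precisely against the exponential density $p(\cdot,\tilde\eta_t)$---so that its right-hand side is literally $E_{\theta_t}[\mathcal L c]$ and not some other mean---and, second, that $\theta \leftrightarrow \eta$ is a bona fide diffeomorphism, so that the equivalence is an equivalence of ODEs in both directions rather than a one-way implication. Both follow from the strict convexity of the cumulant generating function $\psi$ and the defining relation $\eta = \nabla\psi(\theta)$, already used in Proposition \ref{prop:CGF} and in Section \ref{sec:finiteexp}.
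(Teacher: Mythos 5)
Your proposal is correct and follows essentially the same route as the paper: the text preceding the theorem derives the closed moment equation $d_t\tilde\eta_t = E_{p(\cdot,\tilde\eta_t)}[\mathcal{L}c]\,dt$ and then identifies it with \eqref{PFPEPAR} via the duality relation $d\eta = g(\theta)\,d\theta$. Your added care about the invertibility of $\theta \leftrightarrow \eta$ (from strict convexity of $\psi$) just makes explicit what the paper leaves implicit.
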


We can now attempt an analysis of the error between the best possible projection $\eta_t$ and the vector field based (or equivalently assumed density approximation based) projection $\tilde{\eta}$. To do this, write
\[ \epsilon_t := \eta_t - \tilde{\eta}_t ,\]
expressing the difference between the best possible approximation and the vector field projection / assumed density one, in expectation coordinates. 
Differentiating we see easily that
\[ d \epsilon_t =  (E_{p_t} [ {\cal L} c] -  E_{p(\tilde{\eta}_t)} [ {\cal L} c] )dt .\]
Now suppose that the $c$ statistics in $\EF c$ are chosen among the eigenfunctions of the operator ${\cal L}$, so that
\[ {\cal L} c =  - \Lambda c\]
where $\Lambda$ is a $n \times n$ diagonal matrix with the eigenvalues corresponding to the chosen eigenfunctions. 
Substituing, we obtain
\[ d \epsilon_t =  - \Lambda (E_{p_t} [  c] -  E_{p(\tilde{\eta}_t)} [  c] )dt \]
or 
\[ d \epsilon_t =  - \Lambda  \epsilon_t dt \]
from which 
\[ \epsilon_t = \exp(- \Lambda t) \epsilon_0  \]
so that if we start from the manifold the error is zero, meaning that the vector field projection gives us the best possible approximation. If we don't start from the manifold, ie if $p_0$ is outside $\EF c$, then the difference between the vector field approach and the best possible approximation dies out exponentially fast in time provided we have negative eigenvalues for the chosen eigenfunctions. 

\begin{theorem}[Maximum Likelihood Estimator for the Fokker Planck Equation and Fisher-Rao projection]
The vector field projection approach leading to \eqref{PFPEPAR} provides the best possible approximation of the Fokker Planck equation solution in Kullback Leibler in the family $\EF c$, provided that the sufficient statistics $c$ are chosen among the eigenfunctions of the adjoint operator $\cal L$ of the original Fokker Planck equation, and provided that $\EF c$ is an exponential family when using such eigenfunctions. In other words, under such conditions the Fisher Rao projected equation \eqref{PFPEPAR} provides the exact maximum likelihood estimator for the solution of the Fokker Planck equation in the related exponential family. 
\end{theorem}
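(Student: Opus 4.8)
The plan is to assemble the two evolution equations for the expectation parameters derived just above and to compare them directly. First I would recall that the \emph{exact} (maximum likelihood, minimum Kullback--Leibler) projection of the true Fokker--Planck solution $p_t$ onto $\EF c$ is, by the matching-of-moments characterization of the information projection, the density of $\EF c$ with expectation parameters $\eta_t = E_{p_t}[c]$; differentiating under the integral sign and using the adjoint relation between $\mathcal L$ and $\mathcal L^\ast$ gives the (non-closed) equation $d_t \eta_t = E_{p_t}[\mathcal L c]\,dt$. Next I would recall that the vector field projection leading to \eqref{PFPEPAR}, by the immediately preceding theorem on its equivalence with the assumed density approximation, is exactly the closed equation $d_t \tilde\eta_t = E_{p(\cdot,\tilde\eta_t)}[\mathcal L c]\,dt$ written in the same expectation coordinates.

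The heart of the argument is then to form the discrepancy $\epsilon_t := \eta_t - \tilde\eta_t$ and differentiate, obtaining $d\epsilon_t = \big(E_{p_t}[\mathcal L c] - E_{p(\cdot,\tilde\eta_t)}[\mathcal L c]\big)\,dt$. Invoking the eigenfunction hypothesis $\mathcal L c = -\Lambda c$, with $\Lambda$ the diagonal matrix of eigenvalues, collapses each expectation of $\mathcal L c$ into $-\Lambda$ times an expectation of $c$; using $E_{p_t}[c] = \eta_t$ and, crucially, the tautology $E_{p(\cdot,\tilde\eta_t)}[c] = \tilde\eta_t$ that expresses the very definition of the expectation coordinates of $\EF c$, this reduces to the linear, closed, matrix ODE $d\epsilon_t = -\Lambda\,\epsilon_t\,dt$. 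Integrating yields $\epsilon_t = \exp(-\Lambda t)\,\epsilon_0$. Finally, since we start from the family, $p_0 = p(\cdot,\theta_0) \in \EF c$ forces $\eta_0 = \tilde\eta_0$ and hence $\epsilon_0 = 0$, so that $\epsilon_t \equiv 0$: the Fisher--Rao / vector field projected evolution coincides at every time with the exact maximum likelihood projection, which is precisely the assertion.

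The routine ODE part is immediate once the setup is in place; the substantive work lies in securing the analytic points hidden in the first paragraph. The main obstacle is justifying the exchange of $d_t$ with the integral and the integration-by-parts identity $\int c(x)\,\mathcal L^\ast_t p_t(x)\,dx = \int (\mathcal L_t c)(x)\,p_t(x)\,dx = E_{p_t}[\mathcal L c]$, that is, controlling the boundary terms at infinity so that $\mathcal L^\ast$ genuinely acts as the formal adjoint of $\mathcal L$ against the eigenfunctions $c$; this is where the growth and integrability assumptions (A)--(F) on $f$, $a$ and on the densities of $\EF c$ enter. A second, more conceptual point is the compatibility proviso stated in the theorem: one must verify that a family of eigenfunctions of $\mathcal L$ can simultaneously serve as the sufficient statistics of a genuine exponential family in $\maxexp$, so that $\EF c$ is well defined and the expectation parametrization $\eta(\theta)$, with $d\eta = g(\theta)\,d\theta$, is actually available. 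Absent this the construction degenerates, which is exactly why the hypothesis is imposed explicitly rather than left implicit.
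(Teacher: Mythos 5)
Your proposal follows essentially the same route as the paper: derive $d_t\eta_t = E_{p_t}[\mathcal{L}c]\,dt$ for the exact Kullback--Leibler (moment-matching) projection, identify \eqref{PFPEPAR} with the closed equation $d_t\tilde\eta_t = E_{p(\cdot,\tilde\eta_t)}[\mathcal{L}c]\,dt$ via the assumed-density equivalence, and then show $\epsilon_t = \eta_t - \tilde\eta_t$ satisfies $d\epsilon_t = -\Lambda\epsilon_t\,dt$ under the eigenfunction hypothesis, so $\epsilon_t = \euler^{-\Lambda t}\epsilon_0 = 0$ when one starts on the manifold. Your closing remarks on the integration-by-parts justification and on whether eigenfunctions can genuinely serve as sufficient statistics correctly identify the points the paper itself leaves as a sketch, so the argument matches and is sound at the same level of rigor.
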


The choice or availability of suitable eigenfunctions is not always straightforward, except in a few simple cases. See \cite{pavliotis} for a discussion on eigenfunctions for the Fokker Planck equation. For example, in the one dimensional case $N=1$ where the diffusion is on a bounded domain $[\ell, r]$ with reflecting boundaries and strictly positive diffusion coefficient $\sigma$ then the spectrum of the operator ${\cal L}$ is discrete, there is a stationary density and eigenfunctions can be expressed with respect to this stationary density. In our framework it would be natural to use the stationary density as background density replacing $M(x)$ and then use the eigenfunctions and the related negative real eigenvalues to study the approximation of the Fokker Planck equation.  

For the case $N>1$ only special types of SDEs allow for a specific eigenfunctions/eigenvalue analysis, see for example the Ornstein Uhlenbeck case and SDEs with constant diffusion matrices and drifts associated to potentials in \cite{pavliotis}. Further research is needed to explore the eigenfunctions approach in connection with maximum likelihood. 

\subsection{The direct $L^2$ metric projection}\label{sec:directl2vshellinger}
As we mentioned at the end of Section \ref{sec:expstatL2}, the $L^2$ structure based on square roots of densities (Hellinger distance) and the exponential statistical manifold lead to the same finite dimensional metric on any finite dimensional manifold $p_\theta$ (not just $\EF c$), but the direct $L^2$ metric based on densities rather than their square roots leads to a different finite dimensional metric. Under a background measure $\mu$, by generalizing straightforwardly  \eqref{eq:projexpu} and the related derivation to a general family $p_\theta$ we see that the statistical manifold induces on finite dimensional families the inner product
\[ \covat {p_\theta} {\frac{\partial \log p_\theta}{\partial \theta_i}}{\frac{\partial \log p_\theta}{\partial \theta_j}} = \scalarat {p_\theta} {\frac{\partial \log p_\theta}{\partial \theta_i}}{\frac{\partial \log p_\theta}{\partial \theta_j}} = g_{i,j}(\theta)\]
and the $L^2(\mu)$ based Hellinger distance leads to
\[ {\scalarat \mu {\frac{\partial \sqrt{ p_\theta}}{\partial \theta_i}}{\frac{\partial \sqrt{ p_\theta}}{\partial \theta_j}} = \frac14 g_{i,j}(\theta)}, \]
essentially giving the same Fisher-Rao metric on the finite dimensional manifold. However, the direct metric yields
\[ \scalarat {\mu}  {\frac{\partial { p_\theta}}{\partial \theta_i}} {\frac{\partial { p_\theta}}{\partial \theta_j}} = \gamma_{i,j}(\theta) \neq g_{i,j}(\theta) . \]

This means that the direct metric leads to a different finite dimensional metric $\gamma$, different from the Fisher Rao $g$ given by the Hellinger distance or the statistical manifold structure. This finite dimensional geometry related to $\gamma$ works quite well when projecting infinite dimensional evolution equations on subspaces $\MG q$ generated by mixtures of a given finite set of densities $q$, see \cite{brigol2,armstrongbrigomcss}, and coincides with traditional Galerkin methods based on $L^2$ bases for $p$ directly. The $g$ metric works well when projecting on finite dimensional exponential families such as $\EF c$. The direct metric approach to dimensionality reduction with $\MG q$ mixtures will not be pursued further here given that its induced finite dimensional geometry is different from the statistical manifold induced geometry.

\section{Conclusions and further work}\label{sec:conc}

We have proposed a dimensionality reduction method for infinite--dimensional measure--valued evolution equations such as the Fokker Planck equation or the Kushner-Stratonovich / Duncan Mortensen Zakai equations, with potential applications to signal processing, quantitative finance, heat flows and quantum theory. This dimensionality reduction method is based on a projection coming from a duality argument and allows one to design a finite dimensional approximation for the evolution equation that is optimal locally according to the statistical manifold structure by G. Pistone and co-authors. Clearly the choice of the finite dimensional manifold on which one should project the infinite dimensional equation is crucial, and we proposed finite dimensional exponential and mixture families as in previous works by D. Brigo and co-authors inspired by the $L^2$ structure instead. 

Given the work of N. Newton \cite{MR2948226,MR3126105,MR3356252} on finding an infinite dimensional manifold structure on the space of measures that combines the exponential manifold structure of G. Pistone and co-authors and the $L^2$ full-space structure used by D. Brigo and co-authors, further work is to be done to see how dimensionality reduction based on Newton's framework would look like and would relate to this paper.

It would also be important to see how convergence works when the finite dimensional manifold dimension tends to infinity. Indeed, 
one further natural question is whether it is possible to prove that the finite dimensional approximated solution converges to the infinite dimensional solution when the dimension of the finite dimensional manifold tends to infinity. More precisely, suppose we are given a sequence of functions
 $(c_j)_{j \in \mathbb{N}}$.
Call $c^m := \{c_1 \ c_2 \ \ldots c_m\}$, and assume that
for an infinite subset $\mathbb{M} \subset \mathbb{N}$ and for $m\in \mathbb{M}$ the family $\EF{c^m}$ is a finite dimensional exponential manifold satisfying assumptions (E) and (F). For example, in the monomial case where $c_i(x) = x^i$, we could have that $\mathbb{M}$ is the set of natural even numbers. 
Call $p(\cdot,\theta^m_t)$ the density coming from projection
of Fokker--Planck equation onto $\EF{c^m}$, $m\in \mathbb{M}$.
It is conceivable that in case the infinite sequence $c_k, k \in \mathbb{N}$ is chosen carefully,   one can prove that if $\mathbb{M} \ni m \rightarrow + \infty$ then 
$p(\cdot,\theta^{m}(t)) \rightarrow p_t$ where $p_t$ is the original infinite dimensional density coming from the Fokker Planck equation being approximated. The way to approach this would be to treat the $c_k$ as a basis of an infinite dimensional space and to use Sobolev spaces and weak convergence arguments. We will try to find the weakest possible conditions under which convergence is attained in future work. 

Further work is also needed to explore the eigenfunctions approach. We have sketched a proof of the fact that if the sufficient statistics $c$ of the exponential family $\EF c$ are chosen among the eigenfunctions of the operator ${\cal L}$ associated with the Fokker Planck equation then the Fisher metric projection gives us also the best maximum likelihood estimator of the exact solution. We need to identify SDEs for which the eigenfunction approach is feasible and to study the related approximation. We might be able to show that by including more and more eigenfunctions we could converge in some sense to the true solution.  

In this paper we also tried to clarify how the finite dimensional and infinite dimensional terminology for exponential and mixture spaces are related, since the terms are often used with different meaning in different contexts. This has been clarified to some extent but not completely, and further work remains to be done.

Further work is needed to clarify the $L^2$ direct metric projection in terms of statistical manifolds. The projection based on the $L^2$ structure on densities rather than their square roots, and the related metric, have been used in \cite{armstrongbrigomcss} to work with projection of infinite dimensional evolution equations on finite dimensional mixture families such as the $\MG q$ above. In further work we would like to relate this projection to the statistical and mixture manifold structures based on Orlicz spaces given here rather than in terms of the blunt whole $L^2$ space.  

We would also like to study in the statistical manifold framework the different projections suggested in \cite{armstrongbrigoicms} for evolution equations driven by rough paths. For such equations there is more than one possible projection, depending on the notion of optimality one chooses, which is related to the rough paths properties. This would combine geometry in the space of probability laws with geometry in the state space.

Finally, we would like to examine different measure evolution equations than the few we worked with here. This too will be investigated in further work.

\section{Acknowledgements}
The authors are grateful to the organizers and participants of the conference {\emph{Computational information geometry for image and signal processing}}, held at the ICMS in Edinburgh on  September 21-25 2015. They are also grateful to Frank Nielsen for feedback on this preprint and to an anonynous referee for suggesting investigating the approximation error, as this prompted us to derive the MLE theorem. G. Pistone is supported by deCastro Statistics, Collegio Carlo Alberto, Moncalieri, and he is a member of GNAFA-INDAM.

\bibliographystyle{splncs03}
\bibliography{BP}

\def\cprime{$'$}
\begin{thebibliography}{10}
\providecommand{\url}[1]{\texttt{#1}}
\providecommand{\urlprefix}{URL }

\bibitem{MR960687}
Abraham, R., Marsden, J.E., Ratiu, T.: Manifolds, tensor analysis, and
  applications, Applied Mathematical Sciences, vol.~75. Springer-Verlag, New
  York, second edn. (1988)

\bibitem{amari:87dual}
Amari, S.: Dual connections on the {H}ilbert bundles of statistical models. In:
  Geometrization of statistical theory (Lancaster, 1987). pp. 123--151. ULDM
  Publ., Lancaster (1987)

\bibitem{amari|nagaoka:2000}
Amari, S., Nagaoka, H.: Methods of information geometry. American Mathematical
  Society, Providence, RI (2000), translated from the 1993 Japanese original by
  Daishi Harada

\bibitem{armstrongbrigoicms}
Armstrong, J., Brigo, D.: Extrinsic projection of {I}t{\^o} {SDE}s on
  submanifolds with applications to non-linear filtering. To appear in the same
  volume of this paper  (2015)

\bibitem{armstrongbrigomcss}
Armstrong, J., Brigo, D.: {N}onlinear filtering via stochastic {PDE} projection
  on mixture manifolds in {$L^2$} direct metric. Mathematics of Control,
  Signals and Systems  28(1),  1--33 (2016)

\bibitem{AJLS:2015:arxiv:1510.07305}
Ay, N., Jost, J., L\^e, H.V., Schwachh\"ofer, L.: Parametrized measure models,
  arXiv:1510.07305

\bibitem{MR2759829}
Brezis, H.: Functional analysis, {S}obolev spaces and partial differential
  equations. Universitext, Springer, New York (2011)

\bibitem{brigoarhus}
Brigo, D.: Diffusion processes, manifolds of exponential densities, and
  nonlinear filtering. Barndorff-Nielsen, Ole E. (ed.) et al., Geometry in
  present day science. Proceedings of the conference, Aarhus, Denmark, January
  16-18, 1997. Singapore: World Scientific. 75-96 (1999).

\bibitem{brigoime}
{Brigo}, D.: {On some filtering problems arising in mathematical finance}.
  Insurance: {M}athematics and {E}conomics  22(1),  53--64 (1998)

\bibitem{brigol2}
{Brigo}, D.: {The direct L2 geometric structure on a manifold of probability
  densities with applications to Filtering}. ArXiv e-prints  (2011)

\bibitem{2009arXiv0901.1308B}
{Brigo}, D., {Pistone}, G.: {Projecting the Fokker-Planck Equation onto a
  finite dimensional exponential family}. Preprint 4/1996, Department of
  Mathematics, University of Padua, posted in 2009 on ArXiv e-prints  (1996)

\bibitem{brigogyor}
Brigo, D.: On nonlinear {SDEs} whose densities evolve in a finite--dimensional
  family. In: Stochastic Differential and Difference Equations, Progress in
  Systems and Control Theory, vol.~23, pp. 11--19. Birkh{\"{a}}user Boston
  (1997)

\bibitem{brigospl}
Brigo, D.: On {SDEs} with marginal laws evolving in finite-dimensional
  exponential families. Statistics \& Probability Letters  49(2),  127 -- 134
  (2000)

\bibitem{brigobernoulli}
Brigo, D., Hanzon, B., Le~Gland, F., et~al.: Approximate nonlinear filtering by
  projection on exponential manifolds of densities. Bernoulli  5(3),  495--534
  (1999)

\bibitem{brigoieee}
Brigo, D., Hanzon, B., LeGland, F.: A differential geometric approach to
  nonlinear filtering: the projection filter. IEEE Trans. Automat. Control
  43(2),  247--252 (1998), \url{http://dx.doi.org/10.1109/9.661075}

\bibitem{brown:86}
Brown, L.D.: Fundamentals of statistical exponential families with applications
  in statistical decision theory. No.~9 in IMS Lecture Notes. Monograph Series,
  Institute of Mathematical Statistics (1986)

\bibitem{MR2396032}
Cena, A., Pistone, G.: Exponential statistical manifold. Ann. Inst. Statist.
  Math.  59(1),  27--56 (2007),
  \url{http://dx.doi.org/10.1007/s10463-006-0096-y}

\bibitem{csiszar:1975}
Csisz{\'a}r, I.: {$I$}-divergence geometry of probability distributions and
  minimization problems. Ann. Probability  3,  146--158 (1975)

\bibitem{FrieA}
Friedman, A.: Stochastic differential equations and applications, vol {I}.
  Academic Press New York (1975)

\bibitem{gibilisco|pistone:98}
Gibilisco, P., Pistone, G.: Connections on non-parametric statistical manifolds
  by {O}rlicz space geometry. IDAQP  1(2),  325--347 (1998)

\bibitem{handel}
van Handel, R., Mabuchi, H.: Quantum projection filter for a highly nonlinear
  model in cavity qed. Journal of Optics B: Quantum and Semiclassical Optics
  7(10),  S226 (2005), \url{http://stacks.iop.org/1464-4266/7/i=10/a=005}

\bibitem{hanzon87}
Hanzon, B.: A diﬀerential-geometric approach to approximate nonlinear
  ﬁltering. In: Dodson, C. (ed.) Geometrization of Statistical Theory, pp.
  219--233. University of {L}ancaster, {ULMD} Publications (1987)

\bibitem{hazewinkel}
Hazewinkel, M., Marcus, S., Sussmann, H.: Nonexistence of finite-dimensional
  filters for conditional statistics of the cubic sensor problem. Systems \&
  control letters  3(6),  331--340 (1983)

\bibitem{lang:1995}
Lang, S.: Differential and {R}iemannian manifolds, Graduate Texts in
  Mathematics, vol. 160. Springer-Verlag, New York, third edn. (1995)

\bibitem{lods|pistone:2015}
Lods, B., Pistone, G.: Information geometry formalism for the spatially
  homogeneous {B}oltzmann equation. Entropy  17(6),  4323--4363 (2015)

\bibitem{mitter}
Mitter, S.K.: On the analogy between mathematical problems of non--linear
  filtering theory and quantum physics. Ricerche di Automatica  10(2),
  163--216 (1979)

\bibitem{MR724434}
Musielak, J.: Orlicz spaces and modular spaces, Lecture Notes in Mathematics,
  vol. 1034. Springer-Verlag, Berlin (1983)

\bibitem{naudts:2011GTh}
Naudts, J.: Generalised thermostatistics. Springer-Verlag London Ltd., London
  (2011)

\bibitem{MR2948226}
Newton, N.J.: An infinite-dimensional statistical manifold modelled on
  {H}ilbert space. J. Funct. Anal.  263(6),  1661--1681 (2012),
  \url{http://dx.doi.org/10.1016/j.jfa.2012.06.007}

\bibitem{MR3126105}
Newton, N.J.: Infinite-dimensional manifolds of finite-entropy probability
  measures. In: Geometric science of information, Lecture Notes in Comput.
  Sci., vol. 8085, pp. 713--720. Springer, Heidelberg (2013),
  \url{http://dx.doi.org/10.1007/978-3-642-40020-9\_79}

\bibitem{MR3356252}
Newton, N.J.: Information geometric nonlinear filtering. Infin. Dimens. Anal.
  Quantum Probab. Relat. Top.  18(2),  1550014, 24 (2015),
  \url{http://dx.doi.org/10.1142/S0219025715500149}

\bibitem{pavliotis}
Pavliotis, G.A.: Stochastic Processes and Applications: Diffusion Processes,
  the Fokker-Planck and Langevin Equations. Springer, Heidelberg (2014)

\bibitem{pistone:2013Entropy}
Pistone, G.: Examples of the application of nonparametric information geometry
  to statistical physics. Entropy  15(10),  4042--4065 (2013),
  \url{http://dx.doi.org/10.3390/e15104042}

\bibitem{pistone:2014SIS}
Pistone, G.: {A} version of the geometry of the multivariate {G}aussian model,
  with applications (2014), {XLVII} {S}cientific {M}eeting of the {ITALIAN
  STATISTICAL SOCIETY} {J}une 11-13, 2014. {C}agliari, {I}taly. {S}ociet\`a
  {I}taliana di {S}tatistica

\bibitem{MR1704564}
Pistone, G., Rogantin, M.: The exponential statistical manifold: mean
  parameters, orthogonality and space transformations. Bernoulli  5(4),
  721--760 (1999), \url{http://dx.doi.org/10.2307/3318699}

\bibitem{pistone|sempi:95}
Pistone, G., Sempi, C.: An infinite-dimensional geometric structure on the
  space of all the probability measures equivalent to a given one. Ann.
  Statist.  23(5),  1543--1561 (October 1995)

\bibitem{santacroce|siri|trivellato:2015}
Santacroce, M., Siri, P., Trivellato, B.: New results on mixture and
  exponential models by {O}rlicz spaces. Bernoulli  (2015)

\bibitem{AJLS:2015}
Schwachh\"ofer, L., Ay, N., Jost, J., L\^e, H.V.: Invariant geometric
  structures in statistical models. In: Geometric Science of Information. pp.
  150--158. Springer (2015)

\bibitem{shima:2007}
Shima, H.: The geometry of {H}essian structures. World Scientific Publishing
  Co. Pte. Ltd., Hackensack, NJ (2007)

\bibitem{StroVar}
Stroock, D.W., Varadhan, S.R.S.: Multidimensional diffusion processes,
  Grundlehren der Mathematischen Wissenschaften [Fundamental Principles of
  Mathematical Sciences], vol. 233. Springer-Verlag, Berlin-New York (1979)

\end{thebibliography}

\end{document}